\documentclass[12pt]{article}

\usepackage{amssymb}
\usepackage{graphics}




\oddsidemargin 0mm
\evensidemargin 0mm
\topmargin 0mm
\textheight = 44\baselineskip
\textwidth 160mm


\newcommand{\qed}{$\;\;\;\Box$}
\newenvironment{proof}{\par\smallbreak{\sl\bf Proof.~}}
{\unskip\nobreak\hfill \qed \par\medbreak}

\newcounter{claim}
\renewcommand{\theclaim}{\arabic{claim}}
{\par\medskip\par}

{\qed\par\smallbreak}
%


\newcommand{\D}{{\cal D}}


\newcommand{\N}{{\mathbb N}}
\newcommand{\R}{{\mathbb R}}

\newcommand{\Z}{{\mathbb Z}}


\newcommand{\CC}{{\cal C}}

\newcommand{\G}{{\cal G}}

\newcommand{\LL}{{\cal L}}
\newcommand{\F}{{\cal F}}
\newcommand{\HH}{{\cal H}}

\newcommand{\beq}{\begin{equation}}
\newcommand{\ee}{\end{equation}}

\renewcommand{\d}{\partial}


\newtheorem{thm}{Theorem}[section]
\newtheorem{lem}[thm]{Lemma}

\newtheorem{cor}[thm]{Corollary}
\newtheorem{rem}[thm]{Remark}


\newcommand{\de}{\delta}
\newcommand{\eps}{\varepsilon}

\newcommand{\la}{\lambda}
\newcommand{\om}{\tau}

\newcommand{\reff}[1]{(\ref{#1})}      



\setcounter{page}{1}

\title{Solution regularity and smooth dependence \\ 
for abstract equations and applications to \\ hyperbolic PDEs
} 

\newcounter{thesame}
\setcounter{thesame}{1}
\author{
I.~Kmit
 \ \ \ L.~Recke\\
{\small
Institute of Mathematics, Humboldt University of Berlin,}
\\
{\small Rudower Chaussee 25, D-12489 Berlin, Germany }
\\
{\small
and Institute for Applied Problems of Mechanics and Mathematics, }
\\
{\small
Ukrainian Academy of Sciences,  Naukova St.\ 3b, 79060 Lviv,
Ukraine 
}
\\
{\small   E-mail:
{\tt kmit@informatik.hu-berlin.de}}\\[5mm]
{\small
Institute of Mathematics, Humboldt University of Berlin,}\\
{\small 
Rudower Chaussee 25, D-12489 Berlin, Germany}\\
{\small   E-mail:
{\tt recke@mathematik.hu-berlin.de}}
}
 
\date{}

\begin{document}

\maketitle

\begin{abstract}
\noindent
In the first part we present a generalized implicit function theorem for abstract equations of the type $F(\la,u)=0$.
We suppose that $u_0$ is a solution for $\la=0$ and that $F(\la,\cdot)$ is smooth for all $\la$, but, mainly, we do not suppose that 
$F(\cdot,u)$ is smooth for all $u$. Even so, we state conditions such that for all $\la \approx 0$ there 
exists exactly one solution $u \approx u_0$,  that $u$ is smooth in a certain abstract sense,
and that the data-to-solution map $\la \mapsto u$ is smooth.

In the second part we apply the  results of the first part
to time-periodic solutions of first-order hyperbolic systems of the type
$$
\partial_tu_j  + a_j(x,\la)\partial_xu_j + b_j(t,x,\la,u)  = 0, \; x\in(0,1), \;j=1,\dots,n
$$
with reflection boundary conditions and of second-order hyperbolic equations of the type
$$
\d_t^2u-a(x,\la)^2\d^2_xu+b(t,x,\la,u,\d_tu,\d_xu)=0, \; x\in(0,1)
$$
with mixed boundary conditions (one Dirichlet and one Neumann).
There are at least two  distinguishing features of these results in comparison with the corresponding ones for parabolic PDEs:
First, one has to prevent small divisors from coming up, and we present  explicit sufficient conditions for that
in terms of $u_0$ and of the data of the PDEs and of the boundary conditions.
And second, in general smooth dependence of the coefficient functions $b_j$ and  $b$ on $t$ is needed in order to get 
smooth dependence of the solution on $\la$,
this is completely different to what is known for parabolic PDEs.
\end{abstract}

\emph{Key words:} generalized implicit function theorem, nonlinear first-order and second-order
hyperbolic PDEs, boundary value problems, time-periodic solutions

\section{Introduction}\label{sec:intr}
\renewcommand{\theequation}{{\thesection}.\arabic{equation}}
\setcounter{equation}{0}

In the first part of this paper we consider abstract parameter depending equations of the type
\beq
\label{abst}
F(\la,u)=0.
\ee
We suppose $F(0,u_0)=0$
and state conditions on $F$ and $u_0$ such 
that for all $\la \approx 0$ there exists exactly one solution $u \approx u_0$ to \reff{abst}
and that the data-to-solution map $\la \mapsto u$ is smooth.
Two conditions for that are, similar to classical implicit function theorems, that
 $F(\la,\cdot)$ is smooth for all $\la \approx 0$ and  that $\d_uF(0,u_0)$ is an isomorphism.
But, mainly, we do not suppose that  $F(\cdot,u)$ is smooth for all $u \approx u_0$.
In our applications to hyperbolic PDEs the map $(\la,u) \mapsto\d_uF(\la,u)$ is even not continuous with respect 
to the uniform operator norm, in general.
In other words: We consider parameter depending equations, 
which do not depend smoothly on the parameter, but with solutions which do  depend smoothly on the parameter.
For that, of course, some additional structure is needed, which will be described in Section \ref{IFT}.

Moreover, we prove an abstract solution regularity result for the equation \reff{abst}
of the following kind: Let 
$$
\ldots \hookrightarrow U_{l+1} \hookrightarrow U_{l} \hookrightarrow \ldots  \hookrightarrow U_1  \hookrightarrow U_0
$$
be a sequence
of Banach spaces continuously embedded into each other. Suppose that $F$ maps $\R \times U_0$ into $U_0$
and satisfies some weak smoothness condition with respect to $\la$ (see \reff{ksmooth}) and 
some strong smoothness condition with respect to $u$ (see \reff{infsmooth}).
Then for all solutions $(\la,u) \in \R \times U_0$ to
\reff{abst}, which are close to $(0,u_0)$, it holds $u \in U_l$ for all $l \in \N$, and the data-to-solution map 
$\la \mapsto u\in U_l$ is smooth for all  $l\in\N$.

In the second part we apply the abstract results of the first part to  semilinear first-order
hyperbolic systems of the type 
\beq\label{eq:1.1}
\partial_tu_j  + a_j(x,\la)\partial_xu_j + b_j(t,x,\la,u)  = 0, \; x\in(0,1), \;j=1,\dots,n,
\ee
with periodicity conditions in time
\beq\label{eq:1.3}
u_j(t+2\pi,x) = u_j(t,x), \; x\in(0,1),\;j=1,\dots,n,
\ee
and reflection boundary conditions in space
\beq\label{eq:1.2}
\begin{array}{l}
\displaystyle
u_j(t,0) = \sum\limits_{k=m+1}^nr_{jk}(t,\la)u_k(t,0), \;  j=1,\ldots,m,\\
\displaystyle
u_j(t,1) = \sum\limits_{k=1}^mr_{jk}(t,\la)u_k(t,1),  \;  j=m+1,\ldots,n,
\end{array}
\ee
(or, more generally, with the boundary conditions  \reff{eq:1.2b}) 
as well as to semilinear second-order hyperbolic equations of the type
\beq\label{eq:1.1a}
\partial_t^2u-a(x,\la)^2\d^2_xu+b(t,x,\la,u,\d_tu,\d_xu)=0, \; x\in(0,1) 
\ee
with periodicity condition in time
\beq\label{eq:1.3a}
u(t+2\pi,x) = u(t,x), \; x\in(0,1)
\ee
and  boundary conditions in space
\beq\label{eq:1.2a}
u(t,0) = \d_xu(t,1)=0.
\ee

We suppose that for $\la=0$ there exists a classical solution $u_0$ to 
(\ref{eq:1.1})--(\ref{eq:1.2}) (respectively, to (\ref{eq:1.1a})--(\ref{eq:1.2a})). We state conditions on $u_0$ and on the data of (\ref{eq:1.1})--(\ref{eq:1.2})
(respectively, of (\ref{eq:1.1a})--(\ref{eq:1.2a}))
such that for all $\la \approx 0$ there exists exactly one solution $u \approx u_0$  to 
(\ref{eq:1.1})--(\ref{eq:1.2})  (respectively, to (\ref{eq:1.1a})--(\ref{eq:1.2a})), that this solution is smooth (with respect to $t$ and $x$)
and that the data-to-solution map $\la \mapsto u$ is smooth (in any $C^k$-norm). 

We do not assume that the time-periodic solution $u_0$ is close to be time-independent.
Moreover, we do not assume that $u_0$ is $C^\infty$-smooth with respect to $t$ and $x$, but we prove that it is  $C^\infty$-smooth (under reasonable assumptions, cf. Theorems \ref{thm:hopf} 
and \ref{thm:hopfa}).
Remark also that the initial-boundary value problems, corresponding to (\ref{eq:1.1}), (\ref{eq:1.2}) 
(respectively,  to (\ref{eq:1.1a}),(\ref{eq:1.2a})), do not have a smoothing property 
(for $t>t_0$ the solution $u(t,\cdot)$
is not smoother than the initial function  $u(t_0,\cdot)$), in general.
In fact, all our results concerning the periodic-boundary value problems  (\ref{eq:1.1})--(\ref{eq:1.2}) and (\ref{eq:1.1a})--(\ref{eq:1.2a}) will be proven 
without using any properties of the corresponding initial-boundary value problems.

For smoothness of the data-to-solution map, for example, with respect to  the $L^\infty$-norm,
we need not only  smoothness of the coefficient functions with respect to $\la$, $u$, $\d_tu$ and $\d_xu$, but also  smoothness 
with respect to $t$ (cf. Remark \ref{mn1}).
This is completely different to what is known for elliptic and parabolic PDEs,
where the data-to-solution map may be smooth  with respect to  the $L^\infty$-norm even if the coefficient functions are discontinuous 
in time and/or space variables
(see \cite{GriRe,GroRe}).

Also, we have to assume the conditions \reff{Fred} (or \reff{Fred1} in the case $m=1, n=2$) for (\ref{eq:1.1})--(\ref{eq:1.2}) and  \reff{Ra} for (\ref{eq:1.1a})--(\ref{eq:1.2a}) 
which do not have an analog in corresponding parabolic problems. For example, for  (\ref{eq:1.1})--(\ref{eq:1.2}) with $m=1$, $n=2$, 
$r_{jk}(t,0)=r_{jk}$ and
$b_j(t,x,0,u)=b_{j1}(x)u_1+b_{j2}(x)u_1+f_j(t,x)$
the condition reads
$$
|r_{12}r_{21}|\exp\int_0^1\left(\frac{b_{11}(x)}{a_1(x,0)}-\frac{b_{22}(x)}{a_2(x,0)}\right)dx
\not=1.
$$
In \cite{KR1} it is shown that  this is a kind of a nonresonance condition which prevents small divisors from coming 
up in the Fourier series for the solution components $u_j$.
For  (\ref{eq:1.1a})--(\ref{eq:1.2a}) with $b(t,x,0,u,\d_tu,\d_xu)=a_1(x)\d_tu+a_2(x)\d_xu+b_0(t,x,u)$ the 
nonresonance condition reads
$$
\int_0^1\frac{a_1(x)}{a(x,0)}\,dx\not=0.
$$

\begin{rem}\rm
\label{general}
Unfortunately we do not know if generalizations of our results to  cases with   higher space dimensions and/or to
quasilinear equations exist and how they should look like. 
On the other hand,  generalization of  our results to  cases of a multidimensional control parameter $\la$ 
is straightforward (cf. \cite{KR5} and  Remark \ref{moredim}).
\end{rem}

\begin{rem}\rm
\label{Ausn}
(i) If the coefficients $a_j$ in (\ref{eq:1.1})--(\ref{eq:1.2})
(respectively,  $a$ in  (\ref{eq:1.1a})--(\ref{eq:1.2a})) depend on $t$, then the question
 of smoothness of the data-to-solution map 
seems to be much more difficult (cf. Remark~\ref{tabh}).
This is again quite different to what is known for parabolic PDEs.

(ii) In the particular case, if the coefficients $a_j$  (resp. $a$) do not depend on $\la$ and if the coefficients $b_j$  (resp. $b$)
are not  independent on $t$, the proof of the smoothness of the
data-to-solution map is much simpler and can be done by means of the classical implicit function theorem, cf. Remarks~\ref{nonsm}
and \ref{auto}.
\end{rem}

Our paper is organized as follows:

In Section \ref{IFT} we formulate and prove a generalized implicit function theorem.
Here we use, among other technical tools, the  converse to Taylor's theorem 
and  the fiber contraction principle. 
Section  \ref{Proof} presents our results concerning regularity and
smooth dependence of solutions to (\ref{eq:1.1})--(\ref{eq:1.2}),
using integration along characteristics 
as well as the results of  Section~\ref{IFT}.
Similar results for the second-order hyperbolic problem (\ref{eq:1.1a})--(\ref{eq:1.2a})
are obtained in  Section~\ref{wave}.
Finally, in  \ref{AppendixA}  we present a simple linear version of the fiber contraction principle, while in 
 \ref{AppendixB}
we develop an abstract approach how to verify the key assumptions \reff{FH} and \reff{coerz} of the generalized 
implicit function theorem.

\section{A generalized implicit function theorem}
\label{IFT}
\renewcommand{\theequation}{{\thesection}.\arabic{equation}}
\setcounter{equation}{0}

\subsection{Setting and main result}
\label{setting1}

In this section we consider abstract parameter depending equations of the type \reff{abst}.
Suppose  that
\beq
\label{zero}
F(0,u_0)=0.
\ee
We are going to state conditions on $F$ and $u_0$ such 
that for all $\la \approx 0$ there exists exactly one solution $u \approx u_0$ to \reff{abst},
that the solutions $u$  are smooth (in a way defined below) and smoothly depend  on $\la$.
One condition for that is that $F(\la,\cdot)$ is smooth and that  $\d_uF(\la,u_0)$ is an isomorphism
for all $\la \approx 0$.
But, mainly, we do not suppose that  $F(\cdot,u)$ is smooth for all $u \approx u_0$.
In particular, we do not suppose that  $\d_uF$ is continuous, hence, in general, the condition $\d_uF(\la,u_0)\in
\mbox{Iso}$ for all $\la \approx 0$ does not follow from the  condition $\d_uF(0,u_0) \in
\mbox{Iso}$ of the  classical implicit function theorem (see Remark \ref{rema1}).

In other words, we are going to describe parameter depending equations, which do not depend smoothly on the parameter,
but with solutions which do  depend smoothly on the parameter.
For that, of course, some additional structure is needed, which will be described now.

Let $U_0$ be a Banach space with norm $\|\cdot\|_0$, and let $T(s) \in \LL(U_0), s \in \R$, be a strongly continuous group of 
linear bounded operators on $U_0$. Denote by $A: D(A) \subseteq U_0 \to U_0$ the infinitesimal generator of $T(s)$ and by
$$
U_l:=D(A^l)=\{u \in U_0: T(\cdot)u \in C^l(\R;U_0)\}
$$
the domain of definition of the $l$-th power of $A$. Because of $A$ is closed, $U_l$ is a Banach space with the norm
$$
\|u\|_l:=\sum_{k=0}^l\|A^ku\|_0.
$$
Further, let $u_0 \in U_0$ and $\eps_0>0$ be given. Let  $F:[-\eps_0,\eps_0] \times U_0 \to U_0$ 
be a map satisfying  \reff{zero}.
Consider the map  $\F:[-\eps_0,\eps_0]\times \R  \times U_0 \to U_0$  defined by 
\beq
\label{FFdef} 
\F(\la,s,u):=T(s)F(\la,T(-s)u). 
\ee
Suppose that 
\beq
\label{infsmooth}
\F(\la,\cdot,\cdot) \in C^\infty(\R \times U_0;U_0) \mbox{ for all } \la \in  [-\eps_0,\eps_0],
\ee 
and denote by $\d_s^k\d_u^j\F(\la,s,u)$ 
the corresponding partial derivatives, i.e.  bounded multilinear maps from $U_0^j:=U_0\times \ldots \times U_0$ ($j$ factors) into $U_0$.
Further, we suppose that for all nonnegative integers $j,k$ and $l$
\beq
\label{ksmooth}
\d_s^k\d_u^j\F(\cdot,0,u)(u_1,\ldots,u_j) \in C^l([-\eps_0,\eps_0]; U_0) \mbox{ for all } u,u_1,\ldots,u_j \in U_l.
\ee
Finally, for $u,u_1,\ldots,u_j \in U_l$ denote by $\d_\la^l\d_s^k\d_u^j\F(\la,s,u)(u_1,\ldots,u_j) \in U_0$ the  derivative of order $l$ of the map $\d_s^k\d_u^j\F(\cdot,0,u)(u_1,\ldots,u_j)$.
Suppose that for all nonnegative integers $j,k,l$ 
there exists $c_{jkl}>0$ such that for all $u, u_1,\ldots,u_j \in U_l$ with $ \|u-u_0\|_l \le 1$ and for all  $\la \in [-\eps_0,\eps_0]$ it holds
\beq
\label{apriori}
\|\d_\la^l\d_s^k\d_u^j\F(\la,0,u)(u_1,\ldots,u_j)\|_0 \le  c_{jkl}
\|u_1\|_l \ldots \|u_j\|_l.
\ee

\begin{rem}\rm
\label{care}
The notation $\d_\la^l\d_s^k\d_u^j\F(\la,s,u)(u_1,\ldots,u_j)$ 
should be used with some care: By definition,
\beq
\label{Abl}
\d_\la^l\d_s^k\d_u^j\F(\la,s,u)(u_1,\ldots,u_j):=\frac{d^l}{d\mu^l}\left[\d_s^k\d_u^j\F(\la+\mu,s,u)(u_1,\ldots,u_j)\right]_{\mu=0}
\ee
for given $\la \in  [-\eps_0,\eps_0], s \in \R$ and $u,u_1,\ldots,u_j\in U_l$, where the derivatives in the right-hand side of \reff{Abl} 
exist in the sense of the norm $\|\cdot\|_0$ in $U_0$ because of assumption \reff{ksmooth}.
From the other side, assumption \reff{apriori} claims that for given $\la \in  [-\eps_0,\eps_0], s \in \R$ and $u\in U_l$
the symmetric multilinear map  $(u_1,\ldots,u_j)\in U_l^j \mapsto \d_\la^l\d_s^k\d_u^j\F(\la,s,u)(u_1,\ldots,u_j) \in U_0$
is bounded, hence it is reasonable to denote it by
$$
\d_\la^l\d_s^k\d_u^j\F(\la,s,u):= \d_\la^l\d_s^k\d_u^j\F(\la,s,u)(\cdot,\ldots,\cdot) \in {\cal S}_j\left(U_l;U_0\right).
$$
 Here and in what follows we denote by ${\cal S}_j\left(U_l;U_0\right)$ the vector space of all  
bounded symmetric multilinear maps
from $U_l^j$ to $U_0$ with the usual uniform operator norm
$$
\|A\|_{{\cal S}_j\left(U_l;U_0\right)}:=\sup\left\{\|A(u_1,\ldots,u_j)\|_0:\; \|u_1\|_l \le 1,\ldots,\|u_j\|_l \le 1\right\}.
$$
For example, in general $\d_\la\d_u\F(\la,s,u)$ is not the limit with respect to the uniform operator norm in $\LL(U_0)$
of the differential quotient $(\d_u\F(\la+\mu,s,u)-\d_u\F(\la,s,u))/\mu$ for $\mu \to 0$.

We proceed similarly with $F(\la,u)=\F(\la,0,u)$:  Given $\la \in  [-\eps_0,\eps_0]$ and $u,u_1,\ldots,u_j\in U_l$,
denote 
\beq
\label{derdef}
\d_\la^l\d_u^jF(\la,u)(u_1,\ldots,u_j):=\frac{d^l}{d\mu^l}\left[\d_u^jF(\la+\mu,u)(u_1,\ldots,u_j)\right]_{\mu=0}.
\ee
Because of  \reff{apriori} we have
$$
\d_\la^l\d_u^jF(\la,u):= \d_\la^l\d_u^jF(\la,u)(\cdot,\ldots,\cdot) \in {\cal S}_j\left(U_l;U_0\right).
$$
We do not suppose that $F(\cdot,u): [-\eps_0,\eps_0] \to U_0$ is differentiable for all $u \in U_0$ or that $\d_uF: [-\eps_0,\eps_0]\times U_0 \to \LL(U_0)$
is continuous (with respect to the uniform operator norm in $\LL(U_0)$),
and in the hyperbolic problems discussed in Sections \ref{Proof}  and \ref{wave} this is indeed not  the case.
But later on (see Lemma \ref{lemln3}) we will show that under the assumptions \reff{infsmooth}-\reff{apriori} 
the map $F$ is $C^m$-smooth from $[-\eps_0,\eps_0]\times U_{l+m}$ to $U_l$  for all  $l$ and $m$.
\end{rem}

The main result of this section is the following theorem:
\begin{thm}
\label{thm:IFT}
Suppose \reff{zero} and \reff{infsmooth}--\reff{apriori}. Assume that there exists $c_0>0$ such that 
for all  $\la \in [-\eps_0,\eps_0]$
\beq
\label{FH}
\d_uF(\la,u_0) \mbox{ is Fredholm of index zero from $U_0$ to $U_0$}
\ee
and 
\beq
\label{coerz}
\|\d_uF(\la,u_0)u\|_0 \ge c_0\|u\|_0 \mbox{ for all } u \in U_0.
\ee
Then there exist $\eps \in (0,\eps_0]$ and $\delta>0$ such that for all  $\la \in [-\eps,\eps]$ there exists
a unique  solution $u=\hat{u}(\la)$ to \reff{abst} with $\|u-u_0\|_0 \le \delta$. Moreover, for all nonnegative integers
$k$
we have $\hat{u}(\la) \in U_k$, and the map $\la \in  [-\eps,\eps] \mapsto \hat{u}(\la) \in U_k$ is $C^\infty$-smooth.
\end{thm}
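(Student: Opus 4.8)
The plan is to split the argument into three parts: (i) existence and uniqueness of $\hat u(\la)$ in $U_0$, (ii) the regularity statement $\hat u(\la)\in U_k$ for all $k$, and (iii) the smooth dependence $\la\mapsto\hat u(\la)\in U_k$, which is the genuinely hard part.

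For (i) I would first note that \reff{FH} together with \reff{coerz} says that $B_\la:=\d_uF(\la,u_0)$ is an isomorphism of $U_0$ with $\|B_\la^{-1}\|_{\LL(U_0)}\le 1/c_0$, uniformly in $\la\in[-\eps_0,\eps_0]$: \reff{coerz} gives injectivity and closed range, and Fredholmness of index zero then forces surjectivity. Next, \reff{apriori} with $j=2$, $k=l=0$ makes $u\mapsto\d_uF(\la,u)$ Lipschitz on the closed unit ball around $u_0$ with constant $c_{200}$, uniformly in $\la$, while \reff{ksmooth} with $j=k=l=0$ makes $\la\mapsto F(\la,u_0)$ continuous, so $\|F(\la,u_0)\|_0\to 0$ as $\la\to 0$. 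I would then set $G_\la(u):=u-B_\la^{-1}F(\la,u)$, so that solutions of \reff{abst} are exactly the fixed points of $G_\la$, and compute $\d_uG_\la(u)=B_\la^{-1}(\d_uF(\la,u_0)-\d_uF(\la,u))$, whose norm is $\le(c_{200}/c_0)\|u-u_0\|_0$. Choosing $\de>0$ with $c_{200}\de/c_0\le 1/2$ and then $\eps\in(0,\eps_0]$ with $\|F(\la,u_0)\|_0\le c_0\de/2$ for $|\la|\le\eps$, the map $G_\la$ becomes a $1/2$-contraction of $\{u:\|u-u_0\|_0\le\de\}$ into itself; Banach's fixed point theorem yields the unique $\hat u(\la)$, and by construction $\hat u(0)=u_0$. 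Shrinking $\de$ so that in addition $c_{200}\de<c_0$, a Neumann-series perturbation shows $\d_uF(\la,u)$ is an isomorphism with $\|\d_uF(\la,u)^{-1}\|_{\LL(U_0)}\le 2/c_0$ for all $u$ in this ball and all $|\la|\le\eps$.

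For (ii) I would apply $T(s)$ to $F(\la,\hat u(\la))=0$ and use \reff{FFdef} to obtain $\F(\la,s,T(s)\hat u(\la))=0$ for every $s\in\R$. Fix $\la$. By \reff{infsmooth} the map $(s,v)\mapsto\F(\la,s,v)$ is $C^\infty$, it vanishes at $(0,\hat u(\la))$, and $\d_v\F(\la,0,\hat u(\la))=\d_uF(\la,\hat u(\la))$ is an isomorphism by part (i). The classical implicit function theorem thus gives $\sigma>0$ and a $C^\infty$ map $s\in(-\sigma,\sigma)\mapsto\psi(s)\in U_0$ with $\psi(0)=\hat u(\la)$, $\F(\la,s,\psi(s))=0$, and $\psi(s)$ the unique solution near $\hat u(\la)$. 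Since $s\mapsto T(s)\hat u(\la)$ is continuous and equals $\hat u(\la)$ at $s=0$, it must coincide with $\psi$ on a (possibly smaller) interval around $0$; hence $s\mapsto T(s)\hat u(\la)$ is $C^\infty$ near $0$, which means $\hat u(\la)\in D(A^k)=U_k$ for every $k$, and then $s\mapsto T(s)\hat u(\la)$ is automatically $C^\infty$ on all of $\R$. In particular $u_0=\hat u(0)\in U_k$ for all $k$, so no a priori regularity of $u_0$ needs to be assumed; note that this step uses only \reff{infsmooth} and part (i).

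Part (iii) is where the weak-smoothness hypotheses \reff{ksmooth}, \reff{apriori} and the specialized machinery are really needed, and it is the main obstacle: because $F(\cdot,u)$ is not smooth and $\d_uF$ need not even be norm-continuous in $\la$, the implicit function theorem is unavailable in the $\la$-variable. Fix $l$. I would construct candidate Taylor coefficients $\hat u_0(\la)=\hat u(\la),\hat u_1(\la),\hat u_2(\la),\ldots\in U_l$ by formally differentiating $F(\la,\hat u(\la))=0$ in $\la$: a higher-order chain-rule (Fa\`a-di-Bruno) expansion gives $\hat u_j(\la)=-\d_uF(\la,\hat u(\la))^{-1}P_j(\la)$, where $P_j(\la)$ is a universal polynomial in the mixed derivatives $\d_\la^a\d_u^bF(\la,\hat u(\la))$ and the lower $\hat u_i(\la)$, $i<j$; these are well defined and uniformly bounded thanks to \reff{ksmooth} (existence of the $\la$-derivatives) and \reff{apriori} (their bounds), the unavoidable loss of finitely many derivatives being absorbed by the fact, from part (ii), that $\hat u(\la)\in U_{l+m}$ for every $m$. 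I would then use the fiber contraction principle of \ref{AppendixA}: adjoining to the contraction $G_\la$ the linear maps governing its formal $\la$-derivatives yields a skew-product whose globally attracting fixed point is $(\hat u(\la),\hat u_1(\la),\ldots)$; starting the iteration from a point whose fiber components are genuine difference quotients of the base component and passing to the limit (using closedness of $A$ and of the relevant derivative operators) shows the $\hat u_j(\la)$ really are the derivatives of $\hat u$. Finally the converse to Taylor's theorem upgrades the resulting family of first-order expansions — whose remainder estimates are furnished uniformly by \reff{apriori} and \reff{ksmooth} — to $C^\infty$-smoothness of $\la\mapsto\hat u(\la)\in U_l$. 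The delicate points in this last step are establishing the base case (continuity of $\la\mapsto\hat u(\la)$ into $U_l$, which I would get from the equation $F(\la,\hat u(\la))=0$ and the uniform invertibility of $\d_uF$) and the careful bookkeeping of the loss of derivatives throughout.
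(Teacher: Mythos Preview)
Your part (i) is essentially the paper's Lemma~\ref{exun}: the same fixed-point map $G_\la(u)=u-\d_uF(\la,u_0)^{-1}F(\la,u)$ and the same estimates.

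Your part (ii) is correct and in fact more direct than what the paper does. The paper only runs your implicit-function-theorem argument at $\la=0$ (Lemma~\ref{lemu0}) to get $u_0\in U_l$; it then obtains $\hat u(\la)\in U_l$ only \emph{a posteriori}, as a byproduct of showing that the iterates $u_n(\la)$ converge in every $U_l$ (Lemma~\ref{lemreg2}). Your observation that the same IFT argument applies at every fixed $\la$, because $\d_uF(\la,\hat u(\la))$ is an isomorphism, gives the regularity immediately and avoids that detour.

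Your part (iii), however, is too schematic and differs from the paper's route in ways that matter. The paper does not attempt to build Taylor coefficients of $\hat u$ directly or to apply the converse to Taylor's theorem to $\hat u$. Instead it works throughout with the fixed-point iterates $u_n(\la)$: it first proves the key commutator identity $A^lF(\la,u)-\d_uF(\la,u)A^lu=F_l(\la,u)$ with $F_l:U_{l-1}\to U_0$ (Lemma~\ref{step3}); from this it deduces that $F$ is $C^m$-smooth from $[-\eps_0,\eps_0]\times U_{l+m}$ into $U_l$ (Lemma~\ref{lemln3}, which is where the converse to Taylor's theorem is actually used) and that $\d_uF(\cdot,u_0)^{-1}$ has the analogous smoothness-with-loss property (Lemma~\ref{lemln2}). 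These yield that each $u_n(\cdot)$ is genuinely $C^\infty$ into every $U_l$; the fiber contraction principle (Lemma~\ref{A2}) is then applied to the concrete recursion \reff{rlhs}--\reff{Sn} to show that $A^lu_n^{(m)}(\la)$ converges uniformly, and smoothness of $\hat u$ follows from the classical theorem on uniform convergence of derivatives. You never mention the commutator identity, which is the engine behind the whole loss-of-derivatives bookkeeping; and your description of the fiber contraction step (``starting the iteration from a point whose fiber components are genuine difference quotients'') does not fit the usual framework, since difference quotients depend on an extra increment variable and are not a single fiber point. Your plan to apply the converse to Taylor's theorem to $\hat u$ itself would require remainder estimates for $\hat u(\la+\mu)-\sum_j\mu^j\hat u_j(\la)/j!$ that you do not explain how to obtain; in the paper those estimates are never needed because smoothness is inherited from the iterates.
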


\begin{rem}\rm
\label{hardIFT}
The assumptions \reff{FH} and \reff{coerz} imply that the operators $\d_uF(\la,u_0)$ are isomorphisms on $U_0$.
Hence, 
the difference between  Theorem \ref{thm:IFT}
and the classical  implicit function theorem is not a degeneracy of the partial derivatives  $\d_uF(\la,u)$
(like in  implicit function theorems of Nash-Moser type), but a  degeneracy of the partial derivatives  $\d_\la F(\la,u)$
(which do not exist for all $u \in U_0$, in general). 
There exist several generalizations of the  classical implicit function theorem in which  the partial derivatives  $\d_\la F(\la,u)$
do not exist  for all $u$ or  in which  the partial derivatives  $\d_u F(\la,u)$
are not continuous with respect to $(\la,u)$,
see, e.g.,
\cite[Theorem 7]{Appell}, 
 \cite[Theorem 3.4]{Fife}, \cite[Theorem 4.1]{FifeG},  \cite[Theorems 2.1 and 2.2]{Renardy1}. 
However, it turns out that they do not fit to our applications, so we are going to adapt ideas of  \cite{KR4},
\cite{Magnus}
and \cite[Theorem 2.1]{RO}.
\end{rem}

\begin{rem}\rm
\label{moredim}
We assume that the control parameter $\la$ is one-dimensional, what makes the presentation simpler. But Theorem \ref{thm:IFT} can be generalized in a straightforward way
to the case of maps $F: \Lambda_0 \times U_0\mapsto U_0$ where $\Lambda_0$ is a neighborhood of zero in a normed vector space $\Lambda$. In this case $\d_\la^l\d_s^k\d_u^j\F(\la,s,u)$
is not a multilinear map from $U_0^j$ to $U_0$ anymore, but  a multilinear map from $\Lambda^l \times U_0^j$ to $U_0$, and the assumption \reff{apriori} has to be changed to
$$
\|\d_\la^l\d_s^k\d_u^j\F(\la,0,u)(\la_1,\ldots,\la_l,u_1,\ldots,u_j)\|_0 \le  c_{jkl}
\|\la_1\|\ldots\|\la_l\|\|u_1\|_l \ldots \|u_j\|_l,
$$
where $\|\cdot\|$ is the norm in $\Lambda$.
\end{rem}

\subsection{Proof of Theorem \ref{thm:IFT}}
\label{Proof2.1}

In this subsection we prove Theorem \ref{thm:IFT}. Hence we suppose all assumptions of this theorem to be fulfilled.

First we show local existence and uniqueness for the equation \reff{abst} by means of  the Banach fixed point theorem.
The only difference between our proof and the proof of  the classical implicit function theorem is that we consider the fixed point problem \reff{Gdef}
below, while in the proof of the classical implicit function theorem it suffices to consider the slightly simpler  fixed point problem
$u-\d_uF(0,u_0)^{-1}F(\la,u)=u.$

\begin{lem}
\label{exun}
There exist $\eps \in (0,\eps_0]$ and $\delta>0$ such that for all  $\la \in [-\eps,\eps]$ there exists
a unique solution $u=\hat{u}(\la)$ to \reff{abst} with $\|u-u_0\|_0 \le \delta$.
\end{lem}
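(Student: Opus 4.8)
The plan is to solve \reff{abst} for each fixed $\la$ by the Banach fixed point theorem, applied to the map
\beq
\label{Gdef}
G(\la,u):=u-\d_uF(\la,u_0)^{-1}F(\la,u),
\ee
whose fixed points in $U_0$ are precisely the solutions of $F(\la,u)=0$. The one departure from the classical implicit function theorem is that $\d_uF$ must be inverted at $\la$, not at $0$: since $\d_uF$ is not assumed continuous in $\la$, the operator $\d_uF(0,u_0)^{-1}\d_uF(\la,u_0)$ need not be near the identity, so the classical fixed point map $u\mapsto u-\d_uF(0,u_0)^{-1}F(\la,u)$ need not be a contraction. First I would check that \reff{Gdef} is well posed with $\la$-uniform control: by \reff{coerz} the operator $\d_uF(\la,u_0)$ is injective with closed range, and since by \reff{FH} it is Fredholm of index zero it is therefore onto, hence an isomorphism of $U_0$ with $\|\d_uF(\la,u_0)^{-1}\|_{\LL(U_0)}\le 1/c_0$, the bound being uniform in $\la$; moreover $F(\la,\cdot)=\F(\la,0,\cdot)$ is $C^\infty$ on $U_0$ by \reff{infsmooth}, so $G(\la,\cdot)$ is $C^\infty$ on $U_0$ as well.

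Next I would establish a contraction estimate uniform in $\la$. Differentiation gives $\d_uG(\la,u)=\d_uF(\la,u_0)^{-1}(\d_uF(\la,u_0)-\d_uF(\la,u))$, which vanishes at $u=u_0$. The subtle point — and, I expect, the main obstacle — is that the radius $\delta$ of the ball on which $G(\la,\cdot)$ is a contraction must be chosen independently of $\la$; this does not follow from mere continuity of $u\mapsto\d_uF(\la,u)$, whose modulus of continuity at $u_0$ could in principle degenerate as $\la$ varies. This is exactly what the a priori bound \reff{apriori} supplies: taking $j=2$, $k=l=0$ and integrating along the segment from $u_0$ to $u$ gives $\|\d_uF(\la,u_0)-\d_uF(\la,u)\|_{\LL(U_0)}\le c_{200}\|u-u_0\|_0$ for all $u$ with $\|u-u_0\|_0\le1$ and all $\la\in[-\eps_0,\eps_0]$. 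Hence $\|\d_uG(\la,u)\|_{\LL(U_0)}\le (c_{200}/c_0)\|u-u_0\|_0$, and choosing $\delta:=\min\{1,\,c_0/(2c_{200})\}$ (and $\delta:=1$ if $c_{200}=0$) yields, via the mean value inequality and convexity of the ball, $\|G(\la,u)-G(\la,v)\|_0\le\frac12\|u-v\|_0$ whenever $\|u-u_0\|_0,\|v-u_0\|_0\le\delta$, uniformly in $\la$.

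Finally I would secure the self-mapping property and conclude. For $\|u-u_0\|_0\le\delta$,
$$
\|G(\la,u)-u_0\|_0\le\frac12\|u-u_0\|_0+\|\d_uF(\la,u_0)^{-1}F(\la,u_0)\|_0\le\frac\delta2+\frac1{c_0}\|F(\la,u_0)\|_0.
$$
Since $F(\cdot,u_0)=\F(\cdot,0,u_0)$ is continuous on $[-\eps_0,\eps_0]$ by \reff{ksmooth} (with $j=k=l=0$) and $F(0,u_0)=0$ by \reff{zero}, there is $\eps\in(0,\eps_0]$ such that $\|F(\la,u_0)\|_0\le c_0\delta/2$ for $|\la|\le\eps$; then $G(\la,\cdot)$ maps the closed ball $\{u:\|u-u_0\|_0\le\delta\}$ into itself and is a $\frac12$-contraction there. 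Banach's fixed point theorem then yields, for each $\la\in[-\eps,\eps]$, a unique fixed point $\hat u(\la)$ in this ball, which is the asserted unique solution of \reff{abst} with $\|u-u_0\|_0\le\delta$. The further assertions of Theorem \ref{thm:IFT} — that $\hat u(\la)\in U_k$ and that $\la\mapsto\hat u(\la)$ is $C^\infty$ into each $U_k$ — are not part of this lemma and would be treated afterwards, exploiting the group $T(s)$, a bootstrap via $\F$, and the fiber contraction principle.
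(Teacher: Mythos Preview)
Your proof is correct and matches the paper's own argument essentially step for step: the same $\la$-dependent fixed point map \reff{Gdef}, the same use of \reff{FH} and \reff{coerz} for the uniform bound on $\d_uF(\la,u_0)^{-1}$, the same appeal to \reff{apriori} with $j=2$, $k=l=0$ to obtain a $\la$-uniform Lipschitz estimate for $\d_uF(\la,\cdot)$ near $u_0$, and the same self-mapping argument based on the continuity of $F(\cdot,u_0)$ from \reff{ksmooth}. The only cosmetic difference is that you invoke the mean value inequality for $\d_uG$, whereas the paper writes out the corresponding integral identities explicitly.
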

\begin{proof}
Take  $\la \in [-\eps_0,\eps_0]$.  Because of the assumptions \reff{FH} and  \reff{coerz}, the 
operator $\d_uF(\la,u_0)$ is bijective from $U_0$ to $U_0$.
Hence, the equation \reff{abst} is equivalent to the fixed point problem
\beq
\label{Gdef}
G(\la,u):=u-\d_uF(\la,u_0)^{-1}F(\la,u)=u.
\ee
Moreover, we have
$$
G(\la,u_1)-G(\la,u_2)=\int_0^1\left(I-\d_uF(\la,u_0)^{-1}\d_uF(\la,tu_1+(1-t)u_2)\right)(u_1-u_2)\,dt
$$
and 
$$
\left(I-\d_uF(\la,u_0)^{-1}\d_uF(\la,u)\right)v=-\d_uF(\la,u_0)^{-1}\int_0^1\d_u^2F(\la,tu_0+(1-t)u)(u_0-u,v)\,dt.
$$
Hence, assumptions \reff{apriori} and \reff{coerz} yield that for all positive $\delta<1$ 
$$
\|\left(I-\d_uF(\la,u_0)^{-1}\d_uF(\la,u)\right)v\|_0 \le \frac{\delta c_{200}}{c_0}\|v\|_0 \mbox{ for all } u \in B_\delta(u_0), v \in U_0,
$$
where $ B_\delta(u_0):=\{u \in U_0: \|u-u_0\|_0 \le \delta\}$ is the closed ball of radius $\delta$ around $u_0$.
Therefore, if $\delta$ is sufficiently small,  
then
\beq
\label{est}
\|I-\d_uF(\la,u_0)^{-1}\d_uF(\la,u)\|_{\LL(U_0)} \le \frac{1}{2} \mbox{ for all } u \in B_\delta(u_0),
\ee
and hence
\beq
\label{Gest}
\|G(\la,u_1)-G(\la,u_2)\|_0 \le \frac{1}{2}\|u_1-u_2\|_0  \mbox{ for all } u_1,u_2 \in B_\delta(u_0).
\ee
Further, for $ u\in B_\delta(u_0)$ we have
$$
\|G(\la,u)-u_0\|_0\le \|G(\la,u)-G(\la,u_0)\|_0+ \|\d_uF(\la,u_0)^{-1}F(\la,u_0)\|_0
\le\frac{\delta}{2}+\frac{1}{c_0}\|F(\la,u_0)\|_0.
$$
But $F(\cdot,u_0)$ is continuous by 
  \reff{ksmooth}. Hence, there exists $\eps \in (0,\eps_0]$ such that for all 
$\la \in [-\eps,\eps]$ the map $G(\la,\cdot)$ is a strict contraction of $B_\delta(u_0)$. Then the Banach
 fixed point theorem yields that 
for all $\la \in [-\eps,\eps]$ there exists exactly one solution $u=\hat{u}(\la) \in  B_\delta(u_0)$ to \reff{abst}.
\end{proof}

For proving Theorem \ref{thm:IFT} it remains to prove the solution regularity property
\beq
\label{reg}
\hat{u}(\la) \in U_l \mbox{ for all } \la \in [-\eps,\eps]  \mbox{ and } l=1,2,\ldots
\ee
and the smooth dependence property
\beq
\label{smoothdep}
 \la \in [-\eps,\eps] \mapsto \hat{u}(\la) \in U_l  \mbox{ is $C^\infty$-smooth for all } l=0,1,\ldots.
\ee
In order to prove \reff{reg} and \reff{smoothdep}, we introduce maps $u_n: [-\eps,\eps] \to B_\delta(u_0)$ for $n=0,1,\ldots$ by
\beq
\label{undef}
u_0(\la):=u_0,\; u_{n+1}(\la):=G(\la,u_n(\la))=u_n(\la)
-\d_uF(\la,u_0)^{-1}F(\la,u_n(\la)).
\ee
Since the contraction constant of $G(\la,\cdot)$ is independent of $\la$ (cf. \reff{Gest}), we have
\beq
\label{uniform}
\|u_n(\la)-\hat{u}(\la)\|_0 \to 0 \mbox{ for } n \to \infty \mbox{ uniformly in } \la.
\ee
Hence,  because of a classical theorem from calculus (see, e.g. \cite[Theorem 8.6.3]{Dieudonne}), for proving \reff{reg} and  \reff{smoothdep}
it is sufficient to show that for all nonnegative integers $l$ and $n$
\beq
\label{reg1}
u_n(\la) \in U_l \mbox{ for all }  \la \in [-\eps,\eps] 
\ee
and
\beq
\label{smoothdep1}
\la \in  [-\eps,\eps] \mapsto u_n(\la) \in U_l \mbox{ is $C^\infty$-smooth } 
\ee
and that 
for all nonnegative integers $k$ and $l$
\beq
\label{smoothdep2}
u_n^{(k)}(\la) \mbox{ converges in } U_l \mbox{ for } n \to \infty \mbox{ uniformly  in } \la.
\ee

\begin{lem}
\label{step3}
For all  $\la \in [-\eps_0,\eps_0]$, $s \in \R$,   all positive integers $l$  and  all $u \in U_l$  we have $F(\la,u) \in U_l$ and 
\begin{eqnarray}
&&T(s)A^lF(\la,u)-\d_u\F(\la,s,T(s)u)T(s)A^lu\nonumber\\
&&=\sum_{k=2}^l\,\,\sum_{j_1+\ldots +j_k=l}c^0_{j_1 \ldots j_k}\d^k_u\F(\la,s,T(s)u)\left(T(s)A^{j_1}u,\ldots,T(s)A^{j_k}u\right)\nonumber\\
&&+\sum_{j=1}^{l-1}\,\sum_{k=1}^{l-j}\,\,\sum_{j_1+\ldots +j_k=l-j}c^j_{j_1 \ldots j_k}\d_s^j\d^{k}_u\F(\la,s,T(s)u)\left(T(s)A^{j_1}u,\ldots,T(s)A^{j_k}u\right)\nonumber\\
&&+\d_s^l\F(\la,s,T(s)u),
\label{mformula}
\end{eqnarray}
where $c^j_{j_1 \ldots j_k}$ are certain natural numbers not depending on $\la$, $s$ and $u$.
\end{lem}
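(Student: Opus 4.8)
The identity \reff{mformula} is an expansion of the $l$-th derivative of the composite map $s \mapsto \F(\la,s,T(s)u)$, restricted to $s$ and then shifted, so the natural approach is induction on $l$ together with the chain rule and the Leibniz rule. First I would observe that, since $u \in U_l = D(A^l)$, the map $s \mapsto T(s)u$ lies in $C^l(\R;U_0)$ with $\frac{d}{ds}T(s)u = T(s)Au = AT(s)u$; combined with \reff{infsmooth}, which says $\F(\la,\cdot,\cdot)$ is $C^\infty$ jointly in $(s,u)$, this guarantees that $g(s) := \F(\la,s,T(s)u) = T(s)F(\la,T(-s)u)$ (using \reff{FFdef}) is $C^l$ in $s$. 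In particular $s \mapsto T(s)F(\la,T(-s)u)$ is $C^l$, and evaluating the statement at $s$ and comparing with the value of $\frac{d^l}{ds^l}g(s)$ will give $F(\la,u) \in U_l$ as a byproduct: indeed $g^{(l)}(0)$ exists means $A^l F(\la,u)$ exists (since $g(s) = T(s)F(\la,u)$), and then $T(s)A^lF(\la,u) = g^{(l)}(s)$.

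The core computation is to differentiate $g(s) = \F(\la,s,T(s)u)$ exactly $l$ times. One application of the chain rule gives $g'(s) = \d_s\F(\la,s,T(s)u) + \d_u\F(\la,s,T(s)u)T(s)Au$. Iterating, each further differentiation either (a) increments the number of $\d_s$-derivatives on a term, (b) increments the order of the $\d_u$-multilinear map by one (producing a new slot filled with $T(s)Au$, which after further differentiations may become $T(s)A^{j}u$), or (c) applies $\d/ds$ to an existing argument $T(s)A^{j}u$, turning it into $T(s)A^{j+1}u$. A clean way to organize this is to prove by induction on $l$ that
\beq
\label{gderiv}
g^{(l)}(s) = \sum_{k=0}^{l}\ \sum_{j=0}^{l-k}\ \sum_{j_1+\ldots+j_k = l-j}\ c^{j}_{j_1\ldots j_k}\,\d_s^{j}\d_u^{k}\F(\la,s,T(s)u)\left(T(s)A^{j_1}u,\ldots,T(s)A^{j_k}u\right),
\ee
with the convention that the $k=0$ term is $\d_s^{l}\F(\la,s,T(s)u)$ (empty product of arguments) and with natural-number coefficients independent of $\la,s,u$; the recursion $c \mapsto c'$ from step $l$ to step $l+1$ determines the coefficients explicitly but their exact values are irrelevant for us. Then \reff{mformula} is just \reff{gderiv} with the $k=1,\ j=0$ term $\d_u\F(\la,s,T(s)u)T(s)A^lu$ moved to the left-hand side, the $k\ge 2$ terms with $j=0$ collected into the first double sum, the terms with $1\le j\le l-1$ into the second, and the $k=0$ term written separately as $\d_s^l\F$; note that the $k=1$ terms with $j\ge 1$ have $j_1 = l-j \le l-1$ and so are absorbed into the second sum, while $k=1,j=l$ would force $l-j=0$ and contribute nothing.

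**Main obstacle.** The only genuinely delicate point is the justification that one may differentiate under the multilinear maps and commute $\frac{d}{ds}$ with $T(s)$ and with the bounded linear map $\d_u\F(\la,s,T(s)u)$; that is, that the formal chain-and-Leibniz manipulation is legitimate in the norm of $U_0$. This rests on two facts: that $s \mapsto T(s)A^{j}u$ is continuously differentiable into $U_0$ for $j \le l-1$ with derivative $T(s)A^{j+1}u$ (valid because $u \in U_l$), and that $(s,v_1,\ldots,v_k) \mapsto \d_s^{j}\d_u^{k}\F(\la,s,v)(v_1,\ldots,v_k)$ is $C^1$ jointly, which follows from \reff{infsmooth}. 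Given these, each term in \reff{gderiv} is $C^1$ in $s$ as long as the highest $A$-power appearing is $\le l-1$, so the induction step is valid up to order $l$, and at order $l$ we merely need the terms to be \emph{continuous}, which holds and which is what certifies $g \in C^l$ and hence $A^lF(\la,u)$ well-defined. I would handle this with a short lemma-free remark at the start of the induction rather than belaboring it. Once the bookkeeping is fixed, setting $s=0$ is not needed for \reff{mformula} itself — the formula holds for all $s$ — but it is exactly \reff{gderiv} rearranged, completing the proof.
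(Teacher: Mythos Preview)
Your approach is essentially the same as the paper's: both use the identity $g(s)=\F(\la,s,T(s)u)=T(s)F(\la,u)$, deduce $F(\la,u)\in U_l$ from $g\in C^l(\R;U_0)$ via \reff{infsmooth}, and then obtain \reff{mformula} by induction on $l$, differentiating in $s$ with the chain rule. Aside from a harmless slip (you wrote $T(s)F(\la,T(-s)u)$ where you meant $T(s)F(\la,u)$, as you correctly use two lines later), your organization of the terms in \reff{gderiv} and their rearrangement into the three groups of \reff{mformula} is accurate and matches the paper's argument.
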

\begin{proof}
Because of \reff{FFdef} we have
\beq
\label{invar}
T(s)F(\la,u)=\F(\la,s,T(s)u).
\ee
If $u \in U_l$, then $T(\cdot)u \in C^l(\R;U_0)$, and hence $\F(\la,s,T(\cdot)u) \in C^l(\R;U_0)$ due to \reff{infsmooth}. Then, \reff{invar}
yields $T(\cdot)F(\la,u)\in C^l(\R;U_0)$, i.e. $F(\la,u) \in U_l$. 

Given $u \in U_1$, we can differentiate \reff{invar} with respect to $s$ and get
$
T(s)AF(\la,u)-\d_u\F(\la,s,T(s)u)T(s)Au=\d_s\F(\la,s,T(s)u).
$
The formula \reff{mformula} with $l=1$ is therewith proved.

Now we do induction on $l$. Suppose  \reff{mformula} is true for a fixed $l$. 
Given $u \in U_{l+1}$, we can differentiate  \reff{mformula} with respect to $s$, and, using the chain rule,
 get \reff{mformula} with $l$ 
replaced by $l+1$.
\end{proof}

For brevity, for all positive integers $l$ we introduce  maps  $F_l:[-\eps_0,\eps_0]\times U_{l-1} \to U_0$ 
given by the right-hand side of  \reff{mformula} with $s=0$, i.e.
\begin{eqnarray*}
F_l(\la,u)&:=&
\sum_{k=2}^l\,\,\sum_{j_1+\ldots +j_k=l}c^0_{j_1 \ldots j_k}\d^k_uF(\la,u)\left(A^{j_1}u,\ldots,A^{j_k}u\right)\nonumber\\
&&+\sum_{j=1}^{l-1}\,\sum_{k=1}^{l-j}\,\,\sum_{j_1+\ldots +j_k=l-j}c^j_{j_1 \ldots j_k}\d_s^j\d^{k}_u\F(\la,0,u)\left(A^{j_1}u,\ldots,A^{j_k}u\right)\nonumber\\
&&+\d_s^l\F(\la,0,u).
\end{eqnarray*}
Then  \reff{mformula} with $s=0$ reads
\beq
\label{kEig1}
A^lF(\la,u)-\d_uF(\la,u)A^lu=F_l(\la,u) \mbox{ for all } u \in U_l.
\ee
Moreover, it follows directly from the assumptions \reff{infsmooth}--\reff{apriori} that
\beq
\label{minfsmooth}
F_l(\la,\cdot) \in C^\infty(U_{l-1};U_0),
\ee
\beq
\label{mksmooth}
\d_u^jF_l(\cdot,u)(u_1,\ldots,u_j) \in C^m([-\eps_0,\eps_0]; U_0) \mbox{ for all }
u,u_1,\ldots,u_j \in U_{l+m-1}
\ee
and  that there exists  $\tilde{d}_{jlm}>0$ such that for all $u, u_1,\ldots,u_l \in U_{l+m-1}$ with $\|u-u_0\|_{l+m-1} \le 1$
and for all  $\la \in [-\eps_0,\eps_0]$ it holds
\beq
\label{mapriori}
\|\d_\la^m\d_u^jF_l(\la,u)(u_1,\ldots,u_j)\|_0 \le  \tilde{d}_{jlm}
\|u_1\|_{l+m-1} \ldots \|u_j\|_{l+m-1}.
\ee

\begin{rem}\rm
\label{gain}
Due to \reff{kEig1}, the map $u\in U_l \mapsto A^lF(\la,u)-\d_uF(\la,u)A^lu$ can be extended to a smooth map from $U_{l-1}$ into $U_0$, 
although the maps  $u\in U_l \mapsto A^lF(\la,u)$ and  $u \in U_l \mapsto \d_uF(\la,u)A^lu$ cannot be extended  in such a way, in general.
This will be essentially used in the proofs of Lemmas \ref{lemreg2} and \ref{lemsmoothdep2} below.
Moreover, because of \reff{minfsmooth}, one can differentiate  the identity \reff{kEig1} in $u$ and get
\begin{eqnarray}
\label{kEig2}
&&A^l\d_uF(\la,u)v-\d_uF(\la,u)A^lv\nonumber\\
&&=\d_uF_l(\la,u)v+\d_u^2F(\la,u)(A^lu,v)=:K_l(\la,u)v \mbox{ for all } u,v \in U_l.
\end{eqnarray}
Here for all $\la \in [-\eps_0,\eps_0]$ and $u \in U_{l}$ the linear operator $K_l(\la,u)$, which is defined by \reff{kEig2},
is bounded from $U_{l-1}$ to $U_0$. In other words:  The linear bounded operators $v \in U_l \mapsto  A^l\d_uF(\la,u)v \in U_0$ and 
$v \in U_l \mapsto \d_uF(\la,u)A^lv \in U_0$ cannot be extended to 
linear bounded operators   from $U_{l-1}$ to $U_0$, in general, but their difference can  be extended in such a way.
Even more:  From \reff{apriori} and  \reff{mapriori} it follows that there exists  $\tilde{c}_{l}>0$ 
such that for all $u\in U_{l}$ with $\|u-u_0\|_{l} \le 1$
and for all  $\la \in [-\eps_0,\eps_0]$ it holds
\beq
\label{KAbsch}
\|K_{l}(\la,u)\|_{\LL(U_{l-1};U_0)} \le \tilde{c}_{l}.
\ee
\end{rem}

\begin{lem}
\label{apriori1}
(i) For all  all $\la \in [-\eps_0,\eps_0]$ and all nonnegative integers $l$  it holds $F(\la,\cdot) \in C^\infty(U_l;U_l)$.

(ii) For all nonnegative integers $j,l,m$ and all $u,u_1,\ldots,u_j \in U_{l+m}$ it holds 
$\d_u^jF(\cdot,u)(u_1,\allowbreak\ldots,u_j) \in C^m([-\eps_0,\eps_0];U_l)$.

(iii) For all   $\la \in [-\eps_0,\eps_0]$ and  all nonnegative integers $j,l,m$ there exists  $d_{jlm}>0$ such that for all $u, u_1,\ldots,u_j \in U_{l+m}$ with $ \|u-u_0\|_{l+m} \le 1$
 it holds 
\beq
\label{mkapriori}
\|\d_\la^m\d_u^jF(\la,u)(u_1,\ldots,u_j)\|_{l} \le d_{jlm}\|u_1\|_{l+m}\ldots\|u_j\|_{l+m}.
\ee
\end{lem}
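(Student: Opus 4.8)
The plan is to prove all three statements simultaneously by induction on $l$, since the estimates and smoothness at level $l$ feed into level $l+1$ through the commutator identity \reff{kEig1}. The base case $l=0$ is exactly the set of hypotheses \reff{infsmooth}, \reff{ksmooth}, \reff{apriori} (applied with $s=0$), so there is nothing to prove there. For the induction step, suppose (i)--(iii) hold for $l$ and for all $m,j$; I want to deduce them for $l+1$. The key structural fact I would exploit is the identity $A^{l+1}F(\la,u) = \d_uF(\la,u)A^{l+1}u + F_{l+1}(\la,u)$ from \reff{kEig1}, rewritten in the form
$$
\|F(\la,u)\|_{l+1} = \|A^{l+1}F(\la,u)\|_0 + \|F(\la,u)\|_l \le \|\d_uF(\la,u)A^{l+1}u\|_0 + \|F_{l+1}(\la,u)\|_0 + \|F(\la,u)\|_l,
$$
and then bound each term: the first by \reff{apriori} (with $j=1$, $l=0$ there) applied to the argument $A^{l+1}u \in U_0$, the second by \reff{mapriori} with $m=0$, and the third by the induction hypothesis (iii). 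Combined with the fact that $F$ preserves each $U_l$ (Lemma \ref{step3}), this gives the qualitative statement $F(\la,\cdot): U_l \to U_l$; the $C^\infty$ part of (i) follows from \reff{minfsmooth} by differentiating \reff{kEig1} in $u$ and running the same induction on the derivatives, using that $\d_u^jF_{l+1}$ is smooth into $U_0$ and the commutator $K_{l+1}$ of \reff{kEig2} is bounded $U_{l+1}\to U_0$ with the uniform bound \reff{KAbsch}.

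For (ii) and (iii) at level $l+1$, I would differentiate \reff{kEig1} a total of $m$ times in $\la$ (legitimate because of \reff{mksmooth}, which guarantees the $\la$-derivatives exist in $U_0$) and $j$ times in $u$, obtaining a Leibniz expansion
$$
A^{l+1}\d_\la^m\d_u^jF(\la,u)(u_1,\dots,u_j) = \d_uF(\la,u)A^{l+1}\d_\la^m\d_u^j( \cdots ) + (\text{lower-order terms involving } F_{l+1}, K_{l+1}, \text{ and } \d_\la^{m'}\d_u^{j'}F \text{ with } m'+j' \text{ smaller or mixed}).
$$
Taking $\|\cdot\|_0$ and using \reff{apriori} on the leading term (it has the shape $\d_uF(\la,u)(\text{something in }U_0)$, so the a priori bound applies with an argument whose $U_0$-norm is controlled by $\|u_1\|_{l+1+m}\cdots$ after invoking the induction hypothesis), then using \reff{mapriori} on the $F_{l+1}$-terms and \reff{KAbsch} plus induction on the commutator terms, yields \reff{mkapriori} at level $l+1$. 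The continuity statement (ii) at level $l+1$ is then read off: $\d_u^j F(\cdot,u)(u_1,\dots,u_j)$ has an $m$-th $\la$-derivative in $U_{l+1}$ which, by the just-proved formula, is continuous in $\la$ (each building block is continuous by \reff{mksmooth} and the induction hypothesis), so it lies in $C^m([-\eps_0,\eps_0];U_{l+1})$.

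The main obstacle, and the point requiring care, is bookkeeping the Leibniz expansion so that every term on the right-hand side, after differentiation, is either (a) of the form "$\d_uF(\la,u)$ applied to an argument in $U_0$ whose norm is dominated by the product $\|u_i\|_{l+1+m}$," to which \reff{apriori} applies, or (b) a derivative of $F_{l+1}$ or $K_{l+1}$, to which \reff{mapriori}/\reff{KAbsch} apply, or (c) a term of strictly lower total order in $(l,m,j)$ covered by the induction hypothesis — with no term left over that would need a $\la$-derivative of $\d_uF$ in operator norm (which does not exist). Verifying that the differentiated commutator identity \reff{kEig2}, and its higher $u$- and $\la$-derivatives, indeed closes in this way — i.e. that $K_{l+1}$ and all its derivatives map $U_{l}$ to $U_0$ boundedly with $\la$-uniform bounds, so that one never loses more than the expected number of derivatives — is where the real work lies; once that is set up, the estimates themselves are routine applications of the triangle inequality and the multilinear bounds.
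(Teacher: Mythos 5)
Your plan is correct and follows essentially the paper's route: the paper likewise differentiates the commutator identity \reff{kEig1} $j$ times in $u$ and $m$ times in $\la$ and reads off (i)--(iii) from \reff{infsmooth}, \reff{ksmooth}, \reff{apriori} together with the properties \reff{minfsmooth}--\reff{mapriori} of $F_l$, the only differences being that it does this directly (the $U_l$-estimate just uses the identity for each power $A^k$, $k\le l$) rather than as an explicit simultaneous induction, and it never needs $K_{l+1}$ or \reff{KAbsch} for this lemma. One bookkeeping correction: when $m\ge 1$ $\la$-derivatives are present, the leading terms are $\d_\la^m\d_u^{j+1}F(\la,u)(A^lu,u_1,\dots,u_j)$ and $\sum_k\d_\la^m\d_u^{j}F(\la,u)(u_1,\dots,A^lu_k,\dots,u_j)$, which are estimated by \reff{apriori} with all arguments measured in the $U_m$-norm (not with $l=0$ there), and this is exactly what yields the factors $\|u_i\|_{l+m}$ in \reff{mkapriori}.
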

\begin{proof}
Claim (i) follows from \reff{infsmooth}, \reff{kEig1} and \reff{minfsmooth}.

Differentiating the identity \reff{kEig1} $j$-times in $u$, we get
for all $\la \in  [-\eps_0,\eps_0]$ and $u,u_1,\ldots,u_j \in U_l$
\begin{eqnarray*}
\lefteqn{
A^l\d_u^jF(\la,u)(u_1,\ldots,u_j)-\sum_{k=1}^j\d_u^jF(\la,u)(u_1,\ldots,u_{k-1},A^lu_k,u_{k+1},\ldots,u_j)}\nonumber\\
&&=\d_u^jF_l(\la,u)(u_1,\ldots,u_j)+\d_u^{j+1}F(\la,u)(A^lu,u_1,\ldots,u_j). 
\end{eqnarray*}
Therefore Claim (ii) follows from \reff{ksmooth}.

Finally, for fixed  $u,u_1,\ldots,u_j \in U_{l+m}$ the last identity can be  differentiated $m$-times in $\la$, which gives
\begin{eqnarray*}
\lefteqn{
A^l\d_\la^m\d_u^jF(\la,u)(u_1,\ldots,u_j)-\sum_{k=1}^j\d_\la^m\d_u^jF(\la,u)(u_1,\ldots,u_{k-1},A^lu_k,u_{k+1},\ldots,v_j)}\nonumber\\
&&=\d_\la^m\d_u^jF_l(\la,u)(u_1,\ldots,u_j)+\d_\la^m\d_u^{j+1}F(\la,u)(A^lu,u_1,\ldots,u_j). 
\end{eqnarray*}
Therefore \reff{apriori} and \reff{mapriori} yield the Claim (iii).
\end{proof}

\begin{lem}
\label{lemln3}
For all nonnegative integers $l$ and all positive integers $m$ the map $(\la,u) \in  [-\eps_0,\eps_0] \times U_{l+m} \mapsto F(\la,u) \in U_l$ is  $C^m$-smooth.
\end{lem}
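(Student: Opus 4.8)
The plan is to prove, via the converse of Taylor's theorem, that the map $\Phi:(\la,u)\mapsto F(\la,u)$ is $C^m$ from (a neighbourhood of $(0,u_0)$ in) $[-\eps_0,\eps_0]\times U_{l+m}$ into $U_l$. The converse of Taylor's theorem reduces this to two tasks: (a) exhibiting at each base point $(\bar\la,\bar u)$ a candidate Taylor polynomial of degree $m$, namely $\sum_{p+q\le m}\frac{\mu^p}{p!\,q!}\,\d_\la^p\d_u^qF(\bar\la,\bar u)(v,\ldots,v)$, whose coefficients $\d_\la^p\d_u^qF(\bar\la,\bar u)$ depend \emph{norm}-continuously on $(\bar\la,\bar u)$; and (b) verifying that the associated remainder is $o\big((|\mu|+\|v\|_{l+m})^m\big)$, uniformly for the base point in compact sets.

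First I would check that each $\d_\la^p\d_u^qF(\bar\la,\bar u)$ with $p+q\le m$ is a well-defined bounded symmetric $q$-linear map from $U_{l+m}$ into $U_l$: Lemma~\ref{apriori1}(i) provides $\d_u^qF(\bar\la,\cdot)$ as a bounded $q$-linear map, Lemma~\ref{apriori1}(ii) licenses the $p$ further differentiations in $\la$, and Lemma~\ref{apriori1}(iii), i.e.\ \reff{mkapriori}, supplies the bound. The principal obstacle, as I see it, is then to show that $(\bar\la,\bar u)\mapsto\d_\la^p\d_u^qF(\bar\la,\bar u)\in{\cal S}_q(U_{l+m};U_l)$ is continuous in the \emph{operator} norm: Lemma~\ref{apriori1}(ii) by itself yields continuity in $\bar\la$ only in the pointwise (strong operator) sense, so one cannot simply invoke the standard criterion ``all partial derivatives exist and are continuous''. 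The way around this is to exploit the uniform a priori bounds \reff{mkapriori}. Writing the increment of $\d_\la^p\d_u^qF$ in the $\bar u$-variable as the integral of $\d_\la^p\d_u^{q+1}F$ along the segment joining the two points (differentiation under the integral sign, justified because the $\la$-derivatives of the integrand up to order $p$ are bounded uniformly in the segment parameter by \reff{mkapriori}) shows that $\d_\la^p\d_u^qF$ is Lipschitz in $\bar u$, uniformly in $\bar\la$; and, when $p<m$, the same device with one extra $\la$-derivative shows it is Lipschitz in $\bar\la$, uniformly in $\bar u$. The only coefficient not reached this way is $\d_\la^mF$ (the case $p=m$, $q=0$), but there continuity in $\bar\la$ is immediate: it is the continuous top derivative of the $C^m$ curve $\la\mapsto F(\la,\bar u)\in U_l$ from Lemma~\ref{apriori1}(ii). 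Combining these gives joint norm-continuity of every candidate coefficient.

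Given this, the remainder estimate (b) is routine bookkeeping. Fixing a compact set $K$ of base points, one expands $F(\bar\la+\mu,\bar u+v)$ first by one-variable Taylor in $\la$ to order $m$ (valid since $\la\mapsto F(\la,\bar u+v)\in C^m([-\eps_0,\eps_0];U_l)$ by Lemma~\ref{apriori1}(ii)), then expands each term $\d_\la^pF(\bar\la,\bar u+v)$ in $v$ to order $m-p$ (valid because, by iterating the integral-representation argument of the previous paragraph, $v\mapsto\d_\la^pF(\bar\la,v)$ is $C^{m-p}$ from $U_{l+m}$ into $U_l$, with $u$-derivatives $\d_\la^p\d_u^qF$), and collects terms. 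The remainders that appear are $o(|\mu|^m)$, $o(\|v\|_{l+m}^{m-p})$, etc., and their moduli are uniform over $K$ because the coefficients are continuous (previous step) and hence uniformly continuous on the compact set $K$, so the relevant families of curves are equicontinuous. This provides both hypotheses of the converse of Taylor's theorem, whence $\Phi\in C^m$. (Since \reff{mkapriori} is only assumed on the ball $\|u-u_0\|_{l+m}\le 1$, the statement one actually obtains is the $C^m$-property on a neighbourhood of $(0,u_0)$, which is the intended reading.)
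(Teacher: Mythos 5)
Your proposal follows essentially the same route as the paper's proof: both rest on the converse to Taylor's theorem, with its hypotheses verified through the bounds \reff{mkapriori} of Lemma \ref{apriori1} and integral representations of increments of $\d_\la^p\d_u^qF$. The only organizational differences are that you expand first in $\la$ and then in $u$ (the paper does the opposite), and that you keep \emph{all} $\d_\la^p\d_u^qF$ with $p+q\le m$ as candidate coefficients and prove their joint operator-norm continuity, whereas the paper needs this only for the coefficients $G_{jk}$ with $j\ge1$ (so $k\le m-1$) and folds the top-order-in-$\la$ differences into explicitly defined remainder maps $R_j$ (see \reff{Rmdef}, \reff{Rdef}), proving those continuous instead; your treatment of the extreme case $\d_\la^mF$ (continuity in $\la$ for fixed $u$ from Lemma \ref{apriori1}(ii), plus Lipschitz continuity in $u$ uniformly in $\la$ from \reff{mkapriori}) is exactly the argument the paper uses for its remainder $R_0$.

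One caveat on your step (b): in the Banach-space version of the converse Taylor theorem the little-$o$ property of the remainder must hold uniformly for the base point in a \emph{neighbourhood} of each point -- this is precisely what the paper's formulation via jointly continuous remainder maps $R_j$ with $R_j(\la,u,0,0)=0$ encodes. Uniformity over compact subsets of $U_{l+m}$, deduced from uniform continuity of the coefficients on a compact set $K$, is strictly weaker in this infinite-dimensional setting and is not the right source of uniformity. The repair is already contained in your own estimates: the Lipschitz bounds furnished by \reff{mkapriori} are uniform on the ball $\|u-u_0\|_{l+m}\le1$ and on $[-\eps_0,\eps_0]$, so writing each remainder in integral form (an integral of differences of the top-order derivatives, which you have shown to be jointly continuous and which vanish at zero increment) yields exactly the continuous-remainder format required, as in the paper. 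With that adjustment your argument is complete and coincides in substance with the paper's.
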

\begin{proof}
We use the so-called converse to Taylor's theorem (see, e.g.,  \cite[Supplement 2.4B]{Abraham}).
This theorem claims that the lemma is true if there exist continuous maps 
$$
G_{jk}: [-\eps_0,\eps_0] \times U_{l+m} \to {\cal S}_j(U_{l+m};U_l) \mbox{ and } R_j: \left([-\eps_0,\eps_0] \times U_{l+m}\right)^2 \to {\cal S}_j(U_{l+m};U_l)
$$
with 
\beq
\label{Rnull}
R_j(\la,u,0,0)=0
\ee
and
\beq
\label{covT}
F(\la+\mu,u+v)-F(\la,u)=\sum_{j=1}^m\sum_{k=0}^{m-j}\mu^{k}G_{jk}(\la,u)v^j+\sum_{j=0}^m\mu^{m-j}R_j(\la,u,\mu,v)v^j.
\ee
As usual, here we denote by $G_{jk}(\la,u)v^j \in  U_{l}$  
the element which one gets by applying the multilinear operator $G_{jk}(\la,u)$  
to the tuple $(v,\ldots,v) \in U_{l+m}^j$.
Similar notation rule is also used for $R_j(\la,u,\mu,v)v^j$ and will be used below.

Because of the assumption \reff{infsmooth} we can apply the classical (direct) Taylor theorem  (cf., e.g., \cite[Theorem 2.4.15]{Abraham})
to the map $F(\la+\mu,\cdot):U_0 \to U_0$, and we get
\begin{eqnarray*}
\lefteqn{
F(\la+\mu,u+v)-\sum_{j=0}^m\frac{1}{j!}\d_u^jF(\la+\mu,u)v^j}\\
&&=\frac{1}{(m-1)!}\int_0^1(1-t)^{m-1}\left(\d_u^{m}F(\la+\mu,u+tv)-\d_u^{m}F(\la+\mu,u)\right)v^{m} dt.
\end{eqnarray*}
Similarly, on the account of the assumption \reff{ksmooth}, we can apply the Taylor theorem  
to the map $\d_u^jF(\cdot,u)v^j: [-\eps_0,\eps_0] \to U_0$ for $v \in U_m$ and $j=0,1,\ldots,m-1$, and we get
\begin{eqnarray*}
\lefteqn{
\d_u^jF(\la+\mu,u)v^j-\sum_{k=0}^{m-j}\frac{\mu^k}{k!}\d_\la^k\d_u^jF(\la,u)v^j}\\
&&=\frac{\mu^{m-j}}{(m-j-1)!}
\int_0^1(1-t)^{m-j-1}\left(\d_\la^{m-j}\d_u^{j}F(\la+t\mu,u)-\d_\la^{m-j}\d_u^{j}F(\la,u)\right)v^{j} dt.
\end{eqnarray*}
This yields \reff{covT} with
\beq
\label{Gjdef}
G_{jk}(\la,u)(v_1,\ldots,v_j):=\frac{1}{j!k!}\d_\la^{k}\d_u^{j}F(\la,u)(v_1,\ldots,v_j),
\ee
\begin{eqnarray}
&&R_j(\la,u,\mu,v)(v_1,\ldots,v_j)\nonumber\\
&&:=\frac{1}{(m-j-1)!j!}\int_0^1(1-t)^{m-j-1}\left(\d_\la^{m-j}\d_u^{j}F(\la+t\mu,u)-\d_\la^{m-j}\d_u^{j}F(\la,u)\right)(v_1,\ldots,v_j) dt\nonumber\\
&&\mbox{ for } j=0,1,2,\ldots,m-1
\label{Rmdef}
\end{eqnarray}
and
\begin{eqnarray}
&&R_m(\la,u,\mu,v)(v_1,\ldots,v_m)\nonumber\\
&&:=\frac{1}{(m-1)!}\int_0^1(1-t)^{m-1}\left(\d_u^{m}F(\la+\mu,u+tv)-\d_u^{m}F(\la,u)\right)(v_1,\ldots,v_m) dt.
\label{Rdef}
\end{eqnarray}
The property  \reff{Rnull} follows directly from the definition \reff{Rmdef}. On the account of 
 \reff{mkapriori}, the multilinear operators defined by \reff{Gjdef}--\reff{Rdef} are bounded from
$U_{l+m}^j$ to $U_l$ and from $U_{l+m}^m$ to $U_l$, respectively.
Moreover,  the map $(\la,u) \in [-\eps_0,\eps_0] \times U_{l+m} \mapsto G_j(\la,u) \in  {\cal S}_j(U_{l+m};U_l)$  is continuous.
Indeed, 
\begin{eqnarray*}
\lefteqn{
\left(G_{jk}(\la+\mu,u+v)-G_{jk}(\la,u)\right)
\left(v_1,\ldots,v_j\right)}\\
&&=\frac{1}{j!k!}\int_0^1\left(\d_\la^{k}\d_u^{j+1}F(\la+\mu,u+tv)(v,v_1,\ldots,v_j)+\mu\d_\la^{k+1}\d_u^{j}F(\la+t\mu,u)(v_1,\ldots,v_j)\right)dt.
\end{eqnarray*}
Since $j+k \le m$ and $j \ge 1$ (cf. \reff{covT}),  we have $k \le m-1$. Hence, we get from \reff{mkapriori} 
the convergency
$$
\|G_{jk}(\la+\mu,u+v)-G_{jk}(\la,u)\|_{\LL(U_{l+m};U_l)} \to 0 \mbox{ for } |\mu|+\|v\|_{l+m} \to 0.
$$
Here we used that $A$ is a closed linear operator in $U_0$, and hence $A\int_0^1f(t)dt=\int_0^1Af(t)dt$ for any continuous function $f:[0,1] \to U_1$.

Similarly one shows that for $j=1,2,\ldots,m$  the maps $(\la,u,\mu,v) \in 
\left([-\eps_0,\eps_0]\times U_{l+m}\right)^2 \mapsto R_j(\la,u,\mu,v) \in  {\cal S}_j(U_{l+m};U_l)$  are continuous.
In the remaining case $j=0$ we have to show (cf. \reff{Rmdef}) that the map
$$
(\la,\mu,u) \in [-\eps_0,\eps_0]^2 \times U_{l+m} \mapsto
\int_0^1(1-t)^{m-1}\left(\d_\la^{m}F(\la+t\mu,u)-\d_\la^{m}F(\la,u)\right) dt \in U_l
$$
is continuous. But this follows from the following two facts: First, because of  \reff{mkapriori} we have
$$
\|\d_\la^{m}F(\la+t\mu,u+v)-\d_\la^{m}F(\la+t\mu,u)\|_l=\left\|\int_0^1\d_\la^{m}\d_uF(\la+t\mu,u+sv)vds\right\|_l
\le \mbox{const}\|v\|_{l+m},
$$
where the constant does not depend on $\la, t, \mu$ and $u$.
And, second, due to Lemma \ref{apriori1} (ii),
the map $\d_\la^{m}F(\cdot,u):[-\eps_0,\eps_0] \to U_l$
is continuous for all $u \in U_{l+m}$.
\end{proof}

\begin{lem}
\label{lemu0}
For all nonnegative integers $l$ we have $u_0 \in U_l$.
\end{lem}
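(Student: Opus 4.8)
The plan is to exploit $F(0,u_0)=0$ together with the intertwining identity \reff{invar} and to apply the \emph{classical} implicit function theorem to the auxiliary equation $\F(0,s,v)=0$, regarded as an equation for $v\in U_0$ depending on the real parameter $s$.

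First I would note that \reff{invar}, taken at $\la=0$ and $u=u_0$, gives
\[
\F(0,s,T(s)u_0)=T(s)F(0,u_0)=0\qquad\mbox{for all }s\in\R .
\]
Thus the curve $s\mapsto v(s):=T(s)u_0$, which is continuous by strong continuity of the group $T$ and satisfies $v(0)=u_0$, is a branch of solutions of $\Phi(s,v):=\F(0,s,v)=0$ through the point $(0,u_0)$. By \reff{infsmooth} the map $\Phi$ is jointly $C^\infty$ on $\R\times U_0$, and $\d_v\Phi(0,u_0)=\d_uF(0,u_0)$ is an isomorphism of $U_0$ by \reff{FH} and \reff{coerz} (cf. Remark \ref{hardIFT}). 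Hence the classical implicit function theorem yields $\eta>0$, $\rho>0$ and a $C^\infty$ curve $s\in(-\eta,\eta)\mapsto w(s)\in U_0$ with $w(0)=u_0$ such that for $|s|<\eta$ the element $w(s)$ is the unique solution of $\Phi(s,\cdot)=0$ in $\{v\in U_0:\|v-u_0\|_0\le\rho\}$.

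Next I would shrink $\eta$ using continuity of $v(\cdot)$ so that $\|v(s)-u_0\|_0\le\rho$ for $|s|<\eta$; the uniqueness statement then forces $T(s)u_0=v(s)=w(s)$ for such $s$, so $s\mapsto T(s)u_0$ is $C^\infty$ near $s=0$. Finally I would translate this into membership in the spaces $U_l=D(A^l)$: differentiability of $T(\cdot)u_0$ at $s=0$ is exactly the statement $u_0\in D(A)=U_1$, and then $\frac{d}{ds}T(s)u_0=T(s)Au_0$; twice differentiability at $s=0$ thus means $T(\cdot)Au_0$ is differentiable at $0$, i.e. $Au_0\in D(A)$, i.e. $u_0\in D(A^2)=U_2$; iterating, $u_0\in U_l$ for every $l$.

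There is no real obstacle in this argument. The only point deserving a line of care is that the definition of $U_l$ requires $T(\cdot)u_0\in C^l(\R;U_0)$ on the whole line, while the implicit function theorem produces only smoothness of $T(\cdot)u_0$ on a neighbourhood of $s=0$; this is bridged by the last step, which reads off $l$-fold differentiability at the single point $s=0$ and, via the group property, promotes it to global $C^l$ regularity of $T(\cdot)u_0$.
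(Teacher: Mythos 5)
Your proposal is correct and follows essentially the same route as the paper: apply the classical implicit function theorem to $\F(0,s,\cdot)=0$ at $(0,u_0)$, using \reff{infsmooth} for smoothness and \reff{FH}, \reff{coerz} for invertibility of $\d_uF(0,u_0)$, and identify the smooth branch with $s\mapsto T(s)u_0$ to conclude $u_0\in U_l$. You merely spell out two steps the paper leaves implicit (the uniqueness-based identification of $T(s)u_0$ with the implicit-function branch, and the passage from smoothness of $T(\cdot)u_0$ near $s=0$ to membership in $D(A^l)$ via the group property), both of which are handled correctly.
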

\begin{proof}
Thanks to the  assumption \reff{zero}, for all $s \in \R$ the element $u=T(s)u_0$ is a solution to the equation
$
\F(0,s,u)=T(s)F(0,T(-s)u)=0.
$
Further, because of the assumption \reff{infsmooth} we have  $\F(0,\cdot,\cdot) \in C^\infty(\R \times U_0;U_0)$. 
Finally,  \reff{FH} and \reff{coerz} give that 
the operator $\d_u\F(0,0,u_0)=\d_uF(0,u_0)$ is bijective from $U_0$ to $U_0$.
Hence, the classical implicit function theorem implies that the map $s \approx 0 \mapsto T(s)u_0 \in U_0$ is $C^\infty$-smooth.
This yields the claim.
\end{proof}

\begin{lem}
\label{lem-1}
For all nonnegative integers $l$ there exist  $c_{l}>0$ such that for  all  $\la \in [-\eps_0,\eps_0]$ 
and all $f\in U_{l}$ 
 it holds $\d_uF(\la,u_0)^{-1}f \in U_l$ and
\beq
\label{lapriori}
\|\d_uF(\la,u_0)^{-1}f\|_{l} \le c_{l}\|f\|_{l}.
\ee
\end{lem}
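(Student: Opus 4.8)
The plan is to split the assertion into a \emph{membership} part, $\d_uF(\la,u_0)^{-1}f\in U_l$ whenever $f\in U_l$, and the \emph{uniform estimate} \reff{lapriori}. The membership I would prove for all $l$ at once by a conjugation argument with the group $T(s)$, and the estimate by induction on $l$ using the commutator identity \reff{kEig2}. Throughout write $L_\la:=\d_uF(\la,u_0)$, which by Remark \ref{hardIFT} (that is, by \reff{FH} and \reff{coerz}) is an isomorphism of $U_0$ with $\|L_\la^{-1}\|_{\LL(U_0)}\le c_0^{-1}$ uniformly in $\la\in[-\eps_0,\eps_0]$; this already settles the case $l=0$.

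For the membership, fix $f\in U_l$ and put $v:=L_\la^{-1}f\in U_0$. Conjugating the definition \reff{FFdef} gives $\d_u\F(\la,s,T(s)u_0)=T(s)L_\la T(-s)=:M_\la(s)$, and since $T(\cdot)$ is a group and $L_\la$ is invertible, $M_\la(s)$ is invertible for every $s\in\R$ with $M_\la(s)^{-1}=T(s)L_\la^{-1}T(-s)$. By \reff{infsmooth} the partial derivative $\d_u\F(\la,\cdot,\cdot)$ is $C^\infty$ from $\R\times U_0$ into $\LL(U_0)$ in the operator norm; composing it with the curve $s\mapsto(s,T(s)u_0)$, which is $C^\infty$ into $\R\times U_0$ because $u_0\in\bigcap_kU_k$ by Lemma \ref{lemu0}, shows $M_\la(\cdot)\in C^\infty(\R;\LL(U_0))$, and hence $M_\la(\cdot)^{-1}\in C^\infty(\R;\LL(U_0))$ since inversion is smooth on the open set of invertible operators. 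As $f\in U_l$, the curve $s\mapsto T(s)f$ lies in $C^l(\R;U_0)$, and therefore
$$
T(s)v=T(s)L_\la^{-1}T(-s)\,T(s)f=M_\la(s)^{-1}T(s)f
$$
belongs to $C^l(\R;U_0)$ as a product of a $C^\infty$ operator curve and a $C^l$ vector curve. Thus $T(\cdot)v\in C^l(\R;U_0)$, i.e.\ $v\in U_l$.

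It remains to prove \reff{lapriori} with constants independent of $\la$, which I do by induction on $l$ (the case $l=0$ being done above). Let $f\in U_l$ and $v:=L_\la^{-1}f$; by the previous step $v\in U_l$, and by the induction hypothesis $\|v\|_{l-1}\le c_{l-1}\|f\|_{l-1}\le c_{l-1}\|f\|_l$. Taking $u=u_0$ in \reff{kEig2} and using $u_0\in U_l$ (Lemma \ref{lemu0}) gives $A^lL_\la v-L_\la A^lv=K_l(\la,u_0)v$; applying $L_\la^{-1}$ and using $L_\la v=f$ yields $A^lv=L_\la^{-1}A^lf-L_\la^{-1}K_l(\la,u_0)v$. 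Hence, by \reff{coerz} and the uniform bound \reff{KAbsch} (which applies since $\|u_0-u_0\|_l=0\le1$),
$$
\|A^lv\|_0\le c_0^{-1}\|A^lf\|_0+c_0^{-1}\|K_l(\la,u_0)\|_{\LL(U_{l-1};U_0)}\|v\|_{l-1}\le c_0^{-1}\|f\|_l+c_0^{-1}\tilde{c}_l c_{l-1}\|f\|_l .
$$
Since $\|v\|_l=\|v\|_{l-1}+\|A^lv\|_0$, this gives \reff{lapriori} with, for instance, $c_l:=c_{l-1}+c_0^{-1}(1+\tilde{c}_l c_{l-1})$, completing the induction.

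The main obstacle is the membership claim $v\in U_l$: a naive attempt to bootstrap it one order at a time through \reff{kEig2} fails, because the commutator term $K_l(\la,u_0)v$ lands only in $U_0$ and so cannot supply the extra regularity. The group-conjugation argument circumvents this, and it is essential there that $T(\cdot)$ is a genuine group (so $M_\la(s)$ is invertible for \emph{all} $s\in\R$, not just near $s=0$) and that $u_0$ is a $C^\infty$-vector of $T(\cdot)$.
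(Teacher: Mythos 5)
Your proposal is correct and follows essentially the same route as the paper: the membership $\d_uF(\la,u_0)^{-1}f\in U_l$ via the conjugation identity $\d_u\F(\la,s,T(s)u_0)=T(s)\d_uF(\la,u_0)T(-s)$ together with \reff{infsmooth} and Lemma \ref{lemu0}, and the uniform bound by induction on $l$ using the commutator identity \reff{kEig2} (the paper's \reff{kEig3}) with \reff{KAbsch} and \reff{coerz}. No gaps.
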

\begin{proof}
The operator $\d_u\F(\la,s,T(s)u_0)=T(s)\d_uF(\la,u_0)T(-s)$ is an isomorphism from $U_0$ to $U_0$, and the map
$s \in \R \mapsto \d_u\F(\la,s,T(s)u_0) \in {\cal L}(U_0)$ is $C^\infty$-smooth because of \reff{infsmooth}
and  Lemma \ref{lemu0}. Hence, for any $f \in U_l$ the map
$s \in \R \mapsto T(s)\d_uF(\la,u_0)^{-1}f=\d_u\F(\la,s,T(s)u_0)^{-1}T(s)f \in U_0$
is $C^l$-smooth, i.e. $\d_uF(\la,u_0)^{-1}f\in U_l$.

Now, let us prove \reff{lapriori} by
induction on $l$. For $l=0$ this follows from the assumption \reff{coerz}.

Let us do the induction step. Suppose that \reff{lapriori} is true for an arbitrary fixed $l$. Take $f \in U_{l+1}$.
From \reff{kEig2} it follows 
\beq
\label{kEig3}
A^{l+1}\d_uF(\la,u_0)^{-1}f=\d_uF(\la,u_0)^{-1}\left(A^{l+1}f-K_{l+1}(\la,u_0)\d_uF(\la,u_0)^{-1}f\right).
\ee
Therefore  \reff{KAbsch} and the induction assumption imply
\begin{eqnarray*}
&&\|A^{l+1}\d_uF(\la,u_0)^{-1}f\|_0\le c_0\left(\|A^{l+1}f\|_0+\|K_{l+1}(\la,u_0)\d_uF(\la,u_0)^{-1}f\|_0\right)\\
&&\le  c_0\left(\|f\|_{l+1}+\tilde{c}_{l+1}\|F(\la,u_0)^{-1}f)\|_l\right)\le  c_0\left(\|f\|_{l+1}+\tilde{c}_{l+1}c_l\|f\|_{l}\right).
\end{eqnarray*}
\end{proof}

\begin{lem}
\label{lemreg1}
For all nonnegative integers $l$ and $n$ it holds \reff{reg1}.
\end{lem}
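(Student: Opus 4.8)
The plan is to prove \reff{reg1} by induction on $n$, with $l$ kept arbitrary throughout. For the base case $n=0$ I would simply invoke Lemma \ref{lemu0}, which gives $u_0\in U_l$ for every $l$, together with the fact that $u_0(\la)=u_0$ by the definition \reff{undef}.

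For the induction step I would assume that $u_n(\la)\in U_l$ for all $l$ and all $\la\in[-\eps,\eps]$, then fix $l$ and $\la$ and argue in two moves. First, from $u_n(\la)\in U_l$ and Lemma \ref{step3} (or, equivalently, Lemma \ref{apriori1}(i), which asserts $F(\la,\cdot)\in C^\infty(U_l;U_l)$) one obtains $F(\la,u_n(\la))\in U_l$. Second, Lemma \ref{lem-1} says that $\d_uF(\la,u_0)^{-1}$ maps $U_l$ into $U_l$, hence $\d_uF(\la,u_0)^{-1}F(\la,u_n(\la))\in U_l$. Since $U_l$ is a linear space, the recursion
$$
u_{n+1}(\la)=u_n(\la)-\d_uF(\la,u_0)^{-1}F(\la,u_n(\la))
$$
then shows $u_{n+1}(\la)\in U_l$, which closes the induction.

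I do not anticipate a genuine obstacle: this lemma is just the qualitative $U_l$-membership statement, obtained by chaining the two invariance facts already established in the preceding lemmas — that $F(\la,\cdot)$ preserves each $U_l$ and that $\d_uF(\la,u_0)^{-1}$ preserves each $U_l$. All the quantitative content (the uniform-in-$n$ bounds in the norms $\|\cdot\|_l$, needed for the later statements \reff{smoothdep1} and \reff{smoothdep2} about $C^\infty$-dependence on $\la$ in $U_l$) is deliberately deferred to the subsequent lemmas, so at this point there is nothing beyond a bookkeeping induction.
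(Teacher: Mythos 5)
Your proof is correct and follows essentially the same route as the paper: induction on $n$, base case from Lemma \ref{lemu0}, and the induction step by combining Lemma \ref{step3} (so that $F(\la,u_n(\la))\in U_l$) with Lemma \ref{lem-1} (so that $\d_uF(\la,u_0)^{-1}$ maps $U_l$ into $U_l$) via the recursion \reff{undef}. Nothing is missing.
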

\begin{proof}
We do induction on $n$.  
For $n=0$ the claim is true because of  Lemma \ref{lemu0}.
In order to do the induction step, let us suppose that $u_n(\la)$  belongs to all spaces $U_l$. Then Lemma \ref{step3} yields
$F(\la,u_n(\la)) \in U_l$, and Lemma \ref{lem-1} implies $u_{n+1}(\la)=u_{n}(\la)-\d_uF(\la,u_0)^{-1}F(\la,u_n(\la)) \in U_l$.
\end{proof}

\begin{lem}
\label{lemln2}
For all nonnegative integers $l$ and $m$ 
\beq
\label{inver}
\d_uF(\cdot,u_0)^{-1}f\in C^m([-\eps_0,\eps_0];U_l) \mbox{ for all } f \in U_{l+m}
\ee
and there exist $c_{lm}>0$ such that for all $\la \in [-\eps_0,\eps_0]$ and $f \in U_{l+m}$ it holds
\beq
\label{mapri}
\left\|\frac{d^m}{d\la^m}\left[\d_uF(\la,u_0)^{-1}f\right]\right\|_l \le c_{lm}\|f\|_{l+m}.
\ee
\end{lem}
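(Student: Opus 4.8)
The case $m=0$ is, apart from the continuity assertion, exactly Lemma~\ref{lem-1}. To get that $\la\mapsto\d_uF(\la,u_0)^{-1}f\in U_l$ is continuous for fixed $f\in U_l$, I would use the elementary resolvent identity
$$
\d_uF(\la+\mu,u_0)^{-1}-\d_uF(\la,u_0)^{-1}=\d_uF(\la+\mu,u_0)^{-1}\bigl(\d_uF(\la,u_0)-\d_uF(\la+\mu,u_0)\bigr)\d_uF(\la,u_0)^{-1},
$$
apply it to $f$, and estimate the result in $\|\cdot\|_l$ by the bound of Lemma~\ref{lem-1}; everything then reduces to the continuity of $\la'\mapsto\d_uF(\la',u_0)w\in U_l$ for the fixed vector $w:=\d_uF(\la,u_0)^{-1}f\in U_l$, which is Lemma~\ref{apriori1}(ii) (with $j=1$, $u=u_0$). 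In the same way one sees that the operators $\d_uF(\la,u_0)^{-1}$, which are uniformly bounded on $U_l$ by Lemma~\ref{lem-1}, depend strongly continuously on $\la$ on $U_l$; I will use this repeatedly below in the form: if $g_\mu\to g_0$ in $U_l$, then $\d_uF(\la+\mu,u_0)^{-1}g_\mu\to\d_uF(\la,u_0)^{-1}g_0$ in $U_l$.

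For $m\ge 1$ I would argue by induction on $m$, proving the slightly stronger statement in which the fixed right-hand side $f$ is replaced by a $\la$-dependent one: if $w$ is a map on $[-\eps_0,\eps_0]$ with $w\in C^k([-\eps_0,\eps_0];U_{l+m-k})$ for $k=0,\dots,m$, then $\la\mapsto\d_uF(\la,u_0)^{-1}w(\la)$ has the same property, and for $j=0,\dots,m$
$$
\Bigl\|\tfrac{d^j}{d\la^j}\bigl[\d_uF(\la,u_0)^{-1}w(\la)\bigr]\Bigr\|_{l+m-j}\le c_{lm}\sum_{k=0}^{j}\bigl\|w^{(k)}(\la)\bigr\|_{l+m-k}.
$$
Lemma~\ref{lemln2} is then the special case $w\equiv f$ (constant), for which the right-hand side reduces to $c_{lm}\|f\|_{l+m}$. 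The case $m=0$ of this strengthened statement is covered by the previous paragraph (applied with $w$ in place of $f$).

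For the induction step, write $v(\la):=\d_uF(\la,u_0)^{-1}w(\la)$, where now $w\in C^k([-\eps_0,\eps_0];U_{l+m+1-k})$ for $k=0,\dots,m+1$. First I would show, by passing to the limit in the difference quotient of $v$ via the resolvent identity above, that $v$ is $C^1$ from $[-\eps_0,\eps_0]$ into $U_{l+m}$ with
$$
v'(\la)=\d_uF(\la,u_0)^{-1}\Bigl(w'(\la)-\d_\la\d_uF(\la,u_0)\,\d_uF(\la,u_0)^{-1}w(\la)\Bigr);
$$
this uses that $v(\la)\in U_{l+m+1}$ (Lemma~\ref{lem-1}), that $\la'\mapsto\d_uF(\la',u_0)v(\la)$ is $C^1$ into $U_{l+m}$ (Lemma~\ref{apriori1}(ii)), that $w$ is $C^1$ into $U_{l+m}$, and the uniform-bound/strong-continuity fact for $\d_uF(\la',u_0)^{-1}$ on $U_{l+m}$. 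Next I would observe that the bracket in the displayed formula is again a map of the structural type above, but with $m$ in place of $m+1$: the term $w'$ obviously is, and for the second term one applies the induction hypothesis at level $l+1$ to $g:=\d_uF(\cdot,u_0)^{-1}w(\cdot)$ and then notes — via the product rule together with the bounds of Lemma~\ref{apriori1}(iii) for $\d_\la^{i+1}\d_uF(\la,u_0)$ — that composition with $\d_\la\d_uF(\cdot,u_0)$ costs exactly one level in the scale, which is precisely balanced by the fact that $\d_\la\d_uF(\la,u_0)$ maps $U_{r+1}$ into $U_r$. Applying the induction hypothesis once more, now at level $l$, to $\d_uF(\cdot,u_0)^{-1}$ composed with this bracket yields that $v'$ is $C^m$ into $U_l$ with the required estimate, hence $v\in C^{m+1}([-\eps_0,\eps_0];U_l)$ with the required estimate for its $(m+1)$-st derivative; this closes the induction.

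The only genuine difficulty here is organizational: one must keep careful track of the scale $U_0\supseteq U_1\supseteq\cdots$ so that every differentiation and every application of $\d_uF(\la,u_0)^{-1}$ or $\d_\la^{k}\d_uF(\la,u_0)$ stays within its domain — the bookkeeping working out exactly because $f\in U_{l+m}$ carries $m$ spare levels, one of which is consumed by each differentiation. I would also stress that all of this must be carried out pointwise in $w$ (equivalently in $f$): as explained in Remark~\ref{care}, the operator family $\la\mapsto\d_uF(\la,u_0)^{-1}$ is in general only strongly, not norm-, continuous (let alone differentiable) between the spaces $U_l$, so there is no shortcut through an operator-valued analogue of Lemma~\ref{lemln3}.
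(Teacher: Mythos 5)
Your argument is correct, and it uses the same raw ingredients as the paper (induction on $m$, the resolvent identity, the derivative formula $-\d_uF(\la,u_0)^{-1}\d_\la\d_uF(\la,u_0)\d_uF(\la,u_0)^{-1}f$, Lemmas \ref{lem-1} and \ref{apriori1}, and the one-level loss per $\la$-derivative), but the way you organize the induction is genuinely different. The paper keeps the right-hand side $f$ fixed and handles the higher derivatives of $\la\mapsto A(\la)^{-1}A'(\la)A(\la)^{-1}f$ by decoupling the three occurrences of $\la$ into the map $(\la,\mu,\nu)\mapsto A(\la)^{-1}A'(\mu)A(\nu)^{-1}f$, whose mixed partials are written as compositions $B_i^l(\la,A^{(j+1)}(\mu)B_k^{l+m+1-k}(\nu,f))$ of continuous maps built from the auxiliary operators $B_k^l(\la,g)=\frac{d^k}{d\la^k}A(\la)^{-1}g$ introduced through the induction hypothesis; the $(m+1)$-st derivative bound then comes from the product rule. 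You instead strengthen the induction statement to $\la$-dependent right-hand sides $w$ with the graded ladder $w\in C^k([-\eps_0,\eps_0];U_{l+m-k})$, so that $v'=A^{-1}\bigl(w'-A'A^{-1}w\bigr)$ is again of the same structural type one order lower and the induction hypothesis can be applied twice (at levels $l+1$ and $l$); this buys a cleaner one-derivative-at-a-time reduction and makes the estimate \reff{mapri} fall out of the strengthened inequality, at the price of having to verify the product-rule regularity of $\la\mapsto\d_\la A(\la)g(\la)$ in the scale, which is exactly the same kind of mixed use of Lemma \ref{apriori1}(ii) (fixed-vector smoothness) and (iii) (uniform bounds) that the paper spends on the continuity of its composed maps. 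Your closing remark that only strong (not norm) continuity of $\la\mapsto\d_uF(\la,u_0)^{-1}$ is available, so no operator-norm shortcut exists, is precisely the point of Remarks \ref{care} and \ref{rema1} and is why both routes must argue pointwise in $f$.
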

\begin{proof}
We do induction on $m$. For brevity set $A(\la):=\d_uF(\la,u_0)$.

First consider the case $m=0$. For $f \in U_{l}$ we have
\begin{eqnarray*}
&&\left\|\left(A(\la+\mu)^{-1}-A(\la)^{-1}\right)f\right\|_l=\left\|A(\la+\mu)^{-1}\left(A(\la)-A(\la+\mu)\right)A(\la)^{-1}f\right\|_l\nonumber\\
&&\le c_l\left\|\left(A(\la)-A(\la+\mu)\right)A(\la)^{-1}f\right\|_l \to 0 \mbox{ for } \mu \to 0
\end{eqnarray*}
because of Lemma \ref{apriori1} (ii) with $j=1, m=0$. Moreover, \reff{mapri} with $m=0$ is proved in Lemma \ref{lem-1}.

Now we  consider the case $m=1$. Take $f \in U_{l+1}$. Then
\begin{eqnarray*}
&&\left(A(\la+\mu)^{-1}-A(\la)^{-1}+\mu A(\la)^{-1}A'(\la)A(\la)^{-1}\right)f\nonumber\\
&&=A(\la+\mu)^{-1}\left(A(\la)-A(\la+\mu)+\mu A'(\la)\right)A(\la)^{-1}f\nonumber\\
&&+\mu\left(A(\la+\mu)^{-1}-A(\la)^{-1}\right)A'(\la)A^{-1}(\la)f.
\end{eqnarray*}
Here $A'(\la):=\d_\la\d_uF(\la,u_0) \in \LL(U_{l+1};U_l)$ is the derivative of $A(\la)$ with respect to $\la$.
The $U_l$-norm of the first term in the right-hand side is $o(\mu)$ for $\mu \to 0$ because of  Lemma~\ref{apriori1} (ii) with $j=m=1$. And the $U_l$-norm of the 
second term in the right-hand side is $o(\mu)$ for $\mu \to 0$ 
because of the first step ($m=0$) of this proof. Hence, $A(\cdot)^{-1}f$ is differentiable as a map from $[-\eps_0,\eps_0]$ to $U_l$, and the derivative in $\la$ is $-A(\la)^{-1}A'(\la)A(\la)^{-1}f$.
Moreover, \reff{mapri} with $m=1$ 
follows from \reff{mkapriori} and \reff{lapriori}. Similarly one shows the continuity of the derivative:
\begin{eqnarray}
&&\left\|\left(A(\la+\mu)^{-1}A'(\la+\mu)A(\la+\mu)^{-1}-A(\la)^{-1}A'(\la)A(\la)^{-1}\right)f\right\|_l\nonumber\\
&&\le c_ld_{1l1}\left\|\left(A(\la+\mu)^{-1}-A(\la)^{-1}\right)f\right\|_l\nonumber\\
&&+c_l\left\|\left(A'(\la+\mu)-A'(\la)\right)A(\la)^{-1}f\right\|_l\nonumber\\
&&+\left\|\left(A(\la+\mu)^{-1}-A(\la)^{-1}\right)A'(\la)A(\la)^{-1}f\right\|_l.
\label{conti}
\end{eqnarray}
This tends to zero  because of the case $m=0$ and because of Lemma \ref{apriori1} (ii) with $j=m=1$.

Now we do the induction step from $m$ to $m+1$. Suppose that for a fixed $m \ge 1$ and all $l$ the assertions of the lemma are true.
For $k=0,1,\ldots,m$ and $l=0,1,2,\ldots$ we consider the map
\beq
\label{Bkdef}
B_k^l:([-\eps_0,\eps_0]\times U_{k+l} \to U_l: \; B_k^l(\la,g):=\frac{d^k}{d\la^k}A(\la)^{-1}g.
\ee
Due to the induction assumption we have
\beq
\label{indas}
\|B_k^l(\la,g)\|_l \le c_{lk}\|g\|_{k+l}.
\ee

Take $f \in U_{l+m+1}$. We have to show that $A(\cdot)^{-1}f \in C^{m+1}([-\eps_0,\eps_0];U_l)$ or, what is the same, that
$\la \in [-\eps_0,\eps_0] \mapsto A(\la)^{-1}A'(\la)A(\la)^{-1}f \in U_l$ is $C^m$-smooth.
For that it is sufficient to show that
\beq
\label{indu}
(\la,\mu,\nu) \in [-\eps_0,\eps_0]^3 \mapsto A(\la)^{-1}A'(\mu)A(\nu)^{-1}f \in U_l \mbox{ is $C^m$-smooth.}
\ee
Because of Lemma \ref{apriori1} (ii) and  the induction assumption, the
 partial derivative $\partial_\la^i\partial_\mu^j\partial_\nu^k$ 
of the map \reff{indu} with $i+j+k \le m$ exists and is equal to 
\beq
\label{indu1}
(\la,\mu,\nu) \in [-\eps_0,\eps_0]^3 \mapsto B_i^l(\la,A^{(j+1)}(\mu)B_k^{l+m+1-k}(\nu,f))\in U_l.
\ee
But \reff{indu1} is a superposition of the following three continuous maps: First take
$$
(\la,\mu,\nu) \in [-\eps,\eps]^3 \mapsto (\la,\mu,\nu,B_k^{l+m+1-k}(\nu,f)) \in  [-\eps,\eps]^3 \times U_{l+m+1-k}, 
$$
then take
$$
(\la,\mu,\nu,g) \in [-\eps,\eps]^3 \times  U_{l+m+1-k}
\mapsto   (\la,\mu,\nu,A^{(j+1)}(\mu)g) \in  [-\eps,\eps]^3 \times U_{l+m-j-k}, 
$$
and finally take
$$
(\la,\mu,\nu,h)  \in  [-\eps,\eps]^3 \times U_{l+m-j-k}\hookrightarrow U_{i+l}  \mapsto   B_i^l(\la,h) \in U_{l}. 
$$
This means that  the map \reff{indu1} is continuous, hence,  \reff{indu} is true.

It remains to show \reff{mapri} with $m$ replaced by $m+1$. In order to show this we calculate (using the product rule)
\begin{eqnarray*}
\frac{d^{m+1}}{d\la^{m+1}}\left[A(\la)^{-1}f\right]&=&-\frac{d^{m}}{d\la^{m}}\left[A(\la)^{-1}A'(\la)A(\la)^{-1}f\right]\\
&=&-\sum_{i+j+k=m}B_i^l(\la,A^{(j+1)}(\la)B_k^{l+m+1-k}(\la,f)).
\end{eqnarray*}
But  \reff{mkapriori} and \reff{indas} imply (for $i+j+k = m$ and $f \in U_{l+m+1}$)
$$
\|B_i^l(\la,A^{(j+1)}(\la)B_k^{l+m+1-k}(\la,f))\|_l \le \mbox{const}\|f\|_{l+m+1},
$$
where the constant does not depend on $\la$ and $f$.
\end{proof}

\begin{lem}
\label{lemsmoothdep1}
For all nonnegative integers $l$ and $n$ it holds \reff{smoothdep1}.
\end{lem}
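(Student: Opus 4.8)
The plan is to argue by induction on $n$, keeping the index $l$ arbitrary throughout. The base case $n=0$ is immediate: $\la\mapsto u_0(\la)=u_0$ is constant, and Lemma \ref{lemu0} gives $u_0\in U_l$ for every $l$, so this constant map is $C^\infty$-smooth into every $U_l$. For the induction step I would assume that $\la\in[-\eps,\eps]\mapsto u_n(\la)\in U_l$ is $C^\infty$-smooth for every $l$, and then read off from the recursion \reff{undef} that $u_{n+1}(\la)=u_n(\la)-\d_uF(\la,u_0)^{-1}F(\la,u_n(\la))$. The first summand is $C^\infty$-smooth into every $U_l$ by the induction hypothesis, so the whole problem reduces to showing that $\la\mapsto\d_uF(\la,u_0)^{-1}F(\la,u_n(\la))\in U_l$ is $C^\infty$-smooth; since $C^\infty$-smoothness amounts to $C^m$-smoothness for every $m\ge1$, I would fix $l$ and $m\ge1$ and prove $C^m$-smoothness into $U_l$.

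Next I would split this map into two composition steps, paying close attention to the loss of the regularity index. First, by the induction hypothesis $\la\mapsto u_n(\la)\in U_{l+2m}$ is $C^m$-smooth, and by Lemma \ref{lemln3}, applied with its parameter $l$ replaced by $l+m$, the map $(\la,u)\in[-\eps_0,\eps_0]\times U_{l+2m}\mapsto F(\la,u)\in U_{l+m}$ is $C^m$-smooth; hence, by the chain rule, $g(\la):=F(\la,u_n(\la))\in U_{l+m}$ is $C^m$-smooth. It then remains to show that $\la\mapsto A(\la)^{-1}g(\la)\in U_l$ is $C^m$-smooth, where, as in the proof of Lemma \ref{lemln2}, $A(\la):=\d_uF(\la,u_0)$. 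One must be careful here not to treat this as a product of smooth maps: $\la\mapsto A(\la)^{-1}$ is not differentiable with respect to the operator norm of $\LL(U_0)$, and the weaker smoothness furnished by Lemma \ref{lemln2} costs one regularity index per $\la$-derivative, which is precisely why the extra index $2m$ rather than $m$ was built in above.

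To carry out this last step I would use the two-variable device from the proof of Lemma \ref{lemln2}: introduce $\Phi(\la,\nu):=A(\la)^{-1}g(\nu)\in U_l$ for $(\la,\nu)\in[-\eps_0,\eps_0]^2$, prove that $\Phi$ is $C^m$-smooth, and then conclude that $\la\mapsto A(\la)^{-1}g(\la)=\Phi(\la,\la)$ is $C^m$-smooth by the chain rule. Since $A(\la)^{-1}$ is a bounded linear operator on each $U_j$ by Lemma \ref{lem-1} and $g\in C^m([-\eps_0,\eps_0];U_{l+m})$, differentiation in $\nu$ passes through $A(\la)^{-1}$ and gives $\d_\nu^j\Phi(\la,\nu)=A(\la)^{-1}g^{(j)}(\nu)$ for $j\le m$; then Lemma \ref{lemln2}, applied for a fixed second argument, yields $\d_\la^i\d_\nu^j\Phi(\la,\nu)=B_i^l\bigl(\la,g^{(j)}(\nu)\bigr)$ for $i+j\le m$, where $B_i^l$ is the map introduced in \reff{Bkdef} in the proof of that lemma. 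Each $B_i^l$ is continuous from $[-\eps_0,\eps_0]\times U_{i+l}$ to $U_l$ (this joint continuity is part of what is shown in the proof of Lemma \ref{lemln2}), and $g^{(j)}$ is continuous from $[-\eps_0,\eps_0]$ into $U_{l+m}\hookrightarrow U_{i+l}$, so every mixed partial of $\Phi$ of total order at most $m$ is continuous on $[-\eps_0,\eps_0]^2\subseteq\R^2$; this gives $\Phi\in C^m$ and completes the induction.

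I do not expect a single hard estimate in this proof: everything rests on two already-established facts (Lemmas \ref{lemln3} and \ref{lemln2}) together with the chain rule. The delicate point is entirely the bookkeeping of regularity indices, i.e. making sure the argument of the composite map starts in a space ($U_{l+2m}$) high enough that, after the index loss incurred by $F$ and the index loss incurred by the inverse $A(\la)^{-1}$, one still lands in $U_l$ with genuine $C^m$-control. A secondary, routine point is the legitimacy of interchanging $\d_\la$ and $\d_\nu$ in the formula above for the mixed partials of $\Phi$, which follows once each of them is seen to exist and to be continuous on the square.
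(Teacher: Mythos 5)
Your proposal is correct and follows essentially the same route as the paper: induction on $n$, reduction to the two-variable map $(\la,\mu)\mapsto\d_uF(\la,u_0)^{-1}F(\mu,u_n(\mu))$, with Lemma \ref{lemln3} plus the induction hypothesis giving smoothness of $\mu\mapsto F(\mu,u_n(\mu))$ and the maps $B^l_j$ from Lemma \ref{lemln2} giving the continuity of all mixed partials as superpositions of continuous maps. The only difference is cosmetic bookkeeping (you fix a total order $m$ and work uniformly in $U_{l+2m}$, while the paper tracks the index $j+l$ per derivative), which changes nothing of substance.
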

\begin{proof}
We do induction on $n$. 
Obviously, for $n=0$  assertion \reff{smoothdep1} is true.

Let us do the induction step. Suppose that  \reff{smoothdep1} is true 
for a fixed $n$. On the account of \reff{undef},
the induction step will be done if we show that for all nonnegative integers $l$ the map
$\la \in  [-\eps,\eps] \mapsto \d_uF(\la,u_0)^{-1}F(\la,u_n(\la)) \in U_l$ is $C^\infty$-smooth. 
For that it is sufficient to show that all partial derivatives of the map
\beq
\label{map1}
(\la,\mu) \in [-\eps_0,\eps_0]^2 \mapsto  \d_uF(\la,u_0)^{-1}F(\mu,u_n(\mu)) \in U_l
\ee
exist and are continuous. 

Because of Lemma \ref{lemln3} and of the induction assumption, for all $l$ the map $\mu \in [-\eps_0,\eps_0] \mapsto  F(\mu,u_n(\mu)) \in U_l$
is $C^\infty$-smooth.  Denote by $C^l_m(\mu)$ the $m$-th derivative of this map.
Then Lemma \ref{lemln2} yields that all partial derivatives $\partial_\la^j\partial_\mu^m$ of the map \reff{map1} exist and equal to $(\la,\mu) \in [-\eps_0,\eps_0]^2 \mapsto B^l_j(\la,C^{j+l}_m(\mu))
\in U_l$. Here we used the notation \reff{Bkdef}. But this map is continuous because it is the superposition of the following two continuous maps: 
\begin{eqnarray*}
(\la,\mu) \in [-\eps,\eps]^2 &\mapsto & (\la,\mu,C^{j+l}_m(\mu)) \in  [-\eps,\eps]^2 \times U_{j+l}, \\
(\la,\mu,u)  \in  [-\eps,\eps]^2 \times U_{j+l} &\mapsto &  B^l_j(\la,u) \in U_l.
\end{eqnarray*}
\end{proof}

\begin{lem}
\label{lemreg2}
For all  nonnegative integers $l$ it holds
\beq
\label{reg2}
u_n(\la) \mbox{ converges in } U_l  \mbox{ for } n \to \infty \mbox{ uniformly  in } \la \in [-\eps,\eps].
\ee
\end{lem}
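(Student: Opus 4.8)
\noindent\emph{Proof plan.} I would argue by induction on $l$, the case $l=0$ being exactly \reff{uniform}. So fix $l\ge1$ and assume that \reff{reg2} holds with $l$ replaced by $l-1$; write $\hat u(\la)$ for the common $U_{l-1}$-limit of the $u_n(\la)$, which necessarily coincides with the $U_0$-limit furnished by Lemma~\ref{exun}. Since $\|\cdot\|_l=\|\cdot\|_{l-1}+\|A^l\,\cdot\,\|_0$, it suffices to show that $v_n(\la):=A^lu_n(\la)$, which is well defined by Lemma~\ref{lemreg1}, converges in $U_0$ uniformly in $\la\in[-\eps,\eps]$.

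To get a recursion for $v_n$, apply $A^l$ to \reff{undef}. Rewriting $A^lF(\la,u_n(\la))=\d_uF(\la,u_n(\la))\,v_n(\la)+F_l(\la,u_n(\la))$ by means of \reff{kEig1}, rewriting $A^l\d_uF(\la,u_0)^{-1}f=\d_uF(\la,u_0)^{-1}\bigl(A^lf-K_l(\la,u_0)\d_uF(\la,u_0)^{-1}f\bigr)$ (a consequence of \reff{kEig2}, obtained as in \reff{kEig3}), and using $\d_uF(\la,u_0)^{-1}F(\la,u_n(\la))=u_n(\la)-u_{n+1}(\la)$, one is led to a relation of the form
\[
v_{n+1}(\la)=M_n(\la)\,v_n(\la)+g_n(\la),
\]
with $M_n(\la):=I-\d_uF(\la,u_0)^{-1}\d_uF(\la,u_n(\la))\in\LL(U_0)$ and $g_n(\la):=-\d_uF(\la,u_0)^{-1}\bigl(F_l(\la,u_n(\la))-K_l(\la,u_0)(u_n(\la)-u_{n+1}(\la))\bigr)\in U_0$. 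The whole point is that the apparent loss of one derivative cancels: $g_n(\la)$ involves $u_n(\la)$ and $u_{n+1}(\la)$ only through the lower-order map $F_l$ (which sends $U_{l-1}$ to $U_0$) and the operator $K_l(\la,u_0)\in\LL(U_{l-1};U_0)$ (cf.\ \reff{KAbsch}), i.e.\ only through $U_{l-1}$-data, on which the induction hypothesis bites.

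I would then check the hypotheses of the linear, $\la$-parametrised fiber contraction principle of~\ref{AppendixA}, being careful that every constant is independent of $\la$. Since $u_n(\la)\in B_\delta(u_0)$ for all $n$ and $\la$, estimate \reff{est} gives $\|M_n(\la)\|_{\LL(U_0)}\le\frac12$; by \reff{coerz}, $\|\d_uF(\la,u_0)^{-1}\|_{\LL(U_0)}\le 1/c_0$; by \reff{apriori} with $j=2$ the map $\d_uF(\la,\cdot)$ is Lipschitz on $B_\delta(u_0)$ with a $\la$-independent constant, so \reff{uniform} forces $M_n(\la)\to I-\d_uF(\la,u_0)^{-1}\d_uF(\la,\hat u(\la))$ in $\LL(U_0)$ uniformly in $\la$; and the induction hypothesis gives $u_n(\la)-u_{n+1}(\la)\to0$ in $U_{l-1}$ uniformly together with a uniform $U_{l-1}$-bound on the $u_n(\la)$, so that, using \reff{KAbsch} and a Lipschitz estimate for $F_l(\la,\cdot)\colon U_{l-1}\to U_0$ that is uniform in $\la$ on that bounded set (which comes from the a~priori bounds \reff{apriori} applied to the terms defining $F_l$), the inhomogeneities $g_n(\la)$ converge in $U_0$ uniformly in $\la$; in particular $\{v_n(\la)\}$ is bounded in $U_0$ uniformly in $n$ and $\la$. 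The fiber contraction principle then delivers uniform convergence of $v_n(\la)$ in $U_0$, and together with the induction hypothesis this is \reff{reg2}.

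The one real obstacle is the derivative loss noted above: applying $A^l$ to \reff{undef} directly produces $A^lF(\la,u_n(\la))$ and $A^l\d_uF(\la,u_0)^{-1}F(\la,u_n(\la))$, neither of which can be controlled by the $U_{l-1}$-data to which the induction hypothesis applies; it is precisely the commutator identities \reff{kEig1}--\reff{kEig2} that trade one derivative for $U_{l-1}$-boundedness, turning these into $F_l$ and $K_l$ and closing the induction. The remaining work is the routine, if lengthy, bookkeeping that keeps all constants $\la$-independent, plus the parametrised linear fiber contraction lemma of~\ref{AppendixA} --- which, unlike a direct attempt to make $G(\la,\cdot)$ a contraction on a ball in $U_l$, does not require $\eps$ to shrink as $l$ grows.
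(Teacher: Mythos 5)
Your proposal is correct and follows essentially the same route as the paper: induction on $l$, the commutator identities \reff{kEig1} and \reff{kEig2}/\reff{kEig3} to recast the recursion for $A^lu_n(\la)$ as $v_{n+1}=M_n(\la)v_n+g_n(\la)$ with the derivative loss absorbed into $F_l$ and $K_l$, the uniform bound \reff{est} giving $\|M_n(\la)\|_{\LL(U_0)}\le\frac12$, and the linear fiber contraction Lemma~\ref{A2} with $\la$-uniform data. The only cosmetic difference is that you express the inhomogeneity via $K_l(\la,u_0)(u_n(\la)-u_{n+1}(\la))$ directly, whereas the paper writes it as $L_{l+1}(\la)F(\la,u_n(\la))$ and separately shows $F(\la,u_n(\la))$ converges in $U_l$; these are equivalent bookkeeping choices.
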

\begin{proof}
We do induction on $l$. 
Because of \reff{uniform} the assertion \reff{reg2} is true for $l=0$.

Now let us do the induction step. Suppose that  \reff{reg2} is true for a certain  $l$.
We have to show that 
\beq
\label{reg2a}
A^{l+1}u_n(\la)  \mbox{ converges in } U_{0}  \mbox{ for } n \to \infty \mbox{ uniformly in } \la \in [-\eps,\eps].
\ee
Let us introduce operators $L_{l+1}(\la) \in \LL(U_l;U_0)$ by 
$$
L_{l+1}(\la)f:=\d_uF(\la,u_0)^{-1}K_{l+1}(\la,u_0)\d_uF(\la,u_0)^{-1}f.
$$
Then \reff{undef}, \reff{kEig1} and \reff{kEig3} imply
\begin{eqnarray*}
&&A^{l+1}\left((u_{n+1}(\la)-u_n(\la)\right)\\
&&=-\d_uF(\la,u_0)^{-1}\left(\d_uF(\la,u_n(\la))A^{l+1}u_n(\la)+F_{l+1}(\la,u_n(\la))\right)+L_{l+1}(\la)F(\la,u_n(\la)).
\end{eqnarray*}
This yields
\begin{eqnarray}
\label{rlhs}
&&A^{l+1}u_{n+1}(\la)-\left(I-\d_uF(\la,u_0)^{-1}\d_uF(\la,u_n(\la))\right)A^{l+1}u_n(\la)\nonumber\\
&&=-\d_uF(\la,u_0)^{-1}F_{l+1}(\la,u_n(\la))+L_{l+1}(\la)F(\la,u_n(\la)).
\end{eqnarray}
Moreover,  \reff{est} we yields
\beq
\label{Ban}
\|I-\d_uF(\la,u_0)^{-1}\d_uF(\la,u_n(\la))\|_{\LL(U_0)} \le \frac{1}{2} \mbox{ for all } \la \in [-\eps,\eps].
\ee
Hence, Lemma \ref{A2} implies that \reff{reg2a} is true if  the right-hand side of \reff{rlhs} converges in $U_0$ for $n \to \infty$
uniformly in $\la$.

Let us show that the right-hand side of \reff{rlhs} indeed converges in $U_0$ for $n \to \infty$
uniformly in $\la$. 
From \reff{mkapriori}, the induction assumption and
$$
F(\la,u_n(\la))-F(\la,u_m(\la))=
\int_0^1\d_uF(\la,su_n(\la)+(1-s)u_m(\la))\left(u_n(\la)-u_m(\la)\right)ds,
$$
it follows that $F(\la,u_n(\la))$ converges in $U_l$ for $n \to \infty$
uniformly in $\la$. 
On the other hand,  \reff{KAbsch} and \reff{lapriori} yield 
$$
\|L_{l+1}(\la)\|_{\LL(U_l:U_0)} \le\mbox{const} \mbox{ for all } \la \in [-\eps_0,\eps_0].
$$
Hence,  $L_{l+1}(\la)F(\la,u_n(\la))$ converges in $U_l$ for $n \to \infty$
uniformly in $\la$.

We proced similarly with the remaining term in the right-hand side of  \reff{rlhs}. Because of \reff{mapriori} and \reff{lapriori} we have
\begin{eqnarray*}
&&\|\d_uF(\la,u_0)^{-1}\left(F_{l+1}(\la,u_n(\la))-F_{l+1}(\la,u_m(\la))\right)\|_0\\
&&=\left\|\d_uF(\la,u_0)^{-1}\int_0^1\d_uF_{l+1}(\la,su_n(\la)+(1-s)u_m(\la))\left(u_n(\la)-u_m(\la)\right)ds\right\|_0\\
&&\le \mbox{const}\|u_n(\la)-u_m(\la)\|_l,
\end{eqnarray*}
which tends to zero for $m,n \to \infty$ uniformly in $\la$ by the induction assumption.
\end{proof}

\begin{lem}
\label{lemsmoothdep2}
For all nonnegative integers $k$ and $l$ it holds \reff{smoothdep2}.
\end{lem}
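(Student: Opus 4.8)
\end{lem}

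\begin{proof}
The plan is a double induction: an outer induction on the order $k$ of the $\la$-derivative and, within each $k$-step, an inner induction on the spatial index $l$. The case $k=0$ (all $l$) is precisely Lemma~\ref{lemreg2}, so fix $k\ge 1$ and assume that for every $j\le k-1$ and every $l$ the derivative $u_n^{(j)}(\la)$ converges in $U_l$ uniformly in $\la$; we must prove the same statement for $k$.

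For the inner base case $l=0$ I would differentiate the recursion \reff{undef} $k$ times in $\la$. Using the chain rule together with Lemma~\ref{lemln3} (smoothness of $(\la,u)\mapsto F(\la,u)$ from $[-\eps_0,\eps_0]\times U_m$ to $U_0$) and Lemma~\ref{lemln2} (smoothness of $\la\mapsto\d_uF(\la,u_0)^{-1}$), one isolates the only term containing the top derivative $u_n^{(k)}$, namely $\d_uF(\la,u_0)^{-1}\d_uF(\la,u_n(\la))u_n^{(k)}(\la)$, and rewrites the recursion as
\[
u_{n+1}^{(k)}(\la)-\bigl(I-\d_uF(\la,u_0)^{-1}\d_uF(\la,u_n(\la))\bigr)u_n^{(k)}(\la)=r_n(\la),
\]
where the leading operator is exactly the one of \reff{Ban} and hence has $\LL(U_0)$-norm $\le 1/2$, while the ``fiber term'' $r_n(\la)$ depends only on the lower derivatives $u_n^{(j)}(\la)$ with $j<k$ and on $\la$-derivatives of $F$ and of $\d_uF(\cdot,u_0)^{-1}$, whose $U_0$-norms are bounded by means of \reff{mkapriori} and \reff{mapri}. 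By the outer induction hypothesis $r_n(\la)$ converges in $U_0$ uniformly in $\la$, so the linear fiber contraction principle, Lemma~\ref{A2} of \ref{AppendixA}, yields that $u_n^{(k)}(\la)$ converges in $U_0$ uniformly in $\la$.

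For the inner step $l\to l+1$, assume that $u_n^{(k)}(\la)$ already converges in $U_l$ uniformly in $\la$. Here I would combine the previous $\la$-differentiation with the device of Lemma~\ref{lemreg2}: apply $A^{l+1}$ and replace $A^{l+1}F(\la,u_n(\la))$ using the identities \reff{kEig1} and \reff{kEig3}, so that the two individually unbounded, derivative-losing operators $A^{l+1}F$ and $\d_uF\,A^{l+1}$ cancel up to the genuinely bounded maps $F_{l+1}$ and $K_{l+1}$. One then obtains
\[
A^{l+1}u_{n+1}^{(k)}(\la)-\bigl(I-\d_uF(\la,u_0)^{-1}\d_uF(\la,u_n(\la))\bigr)A^{l+1}u_n^{(k)}(\la)=\tilde r_n(\la),
\]
again with leading operator of $\LL(U_0)$-norm $\le 1/2$ by \reff{Ban}, while $\tilde r_n(\la)$ is assembled from $A^{l+1}u_n^{(j)}(\la)$ with $j<k$ (convergent by the outer hypothesis), from $u_n^{(k)}(\la)$ measured in $U_l$ (convergent by the inner hypothesis), from the $u_n^{(j)}(\la)$ with $j<k$ in all spaces $U_{l'}$ (outer hypothesis), and from the fixed smooth objects $F$, $F_{l+1}$, $K_{l+1}$, $L_{l+1}$, $\d_uF(\cdot,u_0)^{-1}$ and their $\la$-derivatives, all estimated by \reff{mkapriori}, \reff{mapriori}, \reff{KAbsch}, \reff{mapri} and Lemma~\ref{lemln2}. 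Thus $\tilde r_n(\la)$ converges in $U_0$ uniformly in $\la$, and Lemma~\ref{A2} gives the uniform convergence of $A^{l+1}u_n^{(k)}(\la)$, i.e. of $u_n^{(k)}(\la)$ in $U_{l+1}$. This closes both inductions and establishes \reff{smoothdep2}.

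The hard part is not any single estimate but the bookkeeping: after differentiating the recursion $k$ times in $\la$ and $l+1$ times via $A$, one must verify that every summand except the explicit leading term is either a fixed bounded operator applied to already-convergent data, or a multilinear expression in the $u_n^{(j)}$ whose arguments carry precisely the space indices for which the a priori bounds \reff{mkapriori}, \reff{mapriori} remain valid in spite of the regularity lost at each $A$ and each $\d_\la$. The identities \reff{kEig1}--\reff{kEig3} are exactly what makes this possible, by trading the two unbounded derivative-losing operators for their bounded difference.
\end{proof}
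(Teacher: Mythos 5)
Your proposal is correct and follows essentially the same route as the paper: a double induction (outer in the order of the $\la$-derivative with base case Lemma~\ref{lemreg2}, inner in $l$), differentiation of the fixed point recursion, the commutator identities \reff{kEig1}--\reff{kEig3} to trade the derivative-losing operators for the bounded maps $F_{l+1}$, $K_{l+1}$, and then the uniform bound \reff{Ban} combined with the fiber contraction Lemma~\ref{A2}. The only cosmetic difference is that you differentiate the recursion with $\d_uF(\la,u_0)^{-1}$ already in place (invoking Lemma~\ref{lemln2} for its $\la$-derivatives), whereas the paper differentiates the identity $\d_uF(\la,u_0)(u_n-u_{n+1})=F(\la,u_n)$ and applies the inverse afterwards.
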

\begin{proof}
We have to show that for all nonnegative integers $l$ and $m$ the sequence $A^lu_n^{(m)}(\la)$ converges in $U_0$ for $n \to \infty$ uniformly in $\la$, and we do this by induction on $m$.
For $m=0$ the claim is true  because of Lemma \ref{lemreg2}.

Let us do the induction step $m \to m+1$. The induction assumption is that for a fixed $m \in \N$ we have that
\beq
\label{smoothdep2a}
\begin{array}{l}
\mbox{for all } k=1,\ldots,m \mbox{ and  nonnegative integers } l \mbox{ the sequence }
A^lu_n^{(k)}(\la)\\
\mbox{converges in } U_0  \mbox{ for } n \to \infty \mbox{ uniformly  with respect to } \la.
\end{array}
\ee
We have to show that 
\beq
\label{smoothdep2b}
\begin{array}{l}
\mbox{for all nonnegative integers  } l \mbox{ the sequence }
A^lu_n^{(m+1)}(\la)\\
\mbox{converges in } U_0  \mbox{ for } n \to \infty \mbox{ uniformly  in } \la.
\end{array}
\ee
In order to prove \reff{smoothdep2b},
we differentiate   the identity $\d_uF(\la,u_0)(u_{n}(\la)-u_{n+1}(\la))=F(\la,u_n(\la))$ $m+1$ times
in $\la$. We get
\begin{eqnarray*}
&&\sum_{j=0}^{m+1}{m+1 \choose j}\d_\la^j\d_uF(\la,u_0)\left(u_{n}^{(m+1-j)}(\la)-u_{n+1}^{(m+1-j)}(\la)\right)\nonumber\\
&&=\frac{d^{m+1}}{d\la^{m+1}}F(\la,u_n(\la)))
=\frac{d^{m}}{d\la^{m}}\left(\d_\la F(\la,u_n(\la))+\d_u F(\la,u_n(\la))u'_n(\la)\right).
\end{eqnarray*}
Writing the terms with derivatives of order $m+1$ in the left-hand side and all other terms in the right-hand
 side, we get 
\begin{eqnarray}
&&\d_uF(\la,u_0)u_{n+1}^{(m+1)}(\la)-\left(\d_uF(\la,u_0)-\d_uF(\la,u_n(\la))\right)u_{n}^{(m+1)}(\la)\nonumber\\
&&=G(\la,u_n(\la),u_{n+1}(\la),u_n'(\la),u_{n+1}'(\la),\ldots,u_n^{(m)}(\la),u_{n+1}^{(m)}(\la))
\label{Gm}
\end{eqnarray}
with a certain map $G : [-\eps,\eps] \times U_{l+m+1}^{2(m+1)} \to U_l$. For example, for $m=1$ we have
\begin{eqnarray*}
&&G(\la,u,v,u_1,v_1) = \d_\la^2 F(\la,u)+2 \d_\la \d_uF(\la,u)u_1+\d_u^2 F(\la,u)(u_1,u_1),\\
&&+2\d_\la \d_uF(\la,u_0)(u_1-v_1)+\d_\la^2 \d_uF(\la,u_0)(u-v)
\end{eqnarray*}
and for $m=2$ we have
\begin{eqnarray*}
&&G(\la,u,v,u_1,v_1,u_2,v_2) =  \d_\la^3 F(\la,u)+3 \d_\la^2\d_u F(\la,u)u_1+3 \d_\la\d_u^2 F(\la,u)(u_1,u_1)\\
&&+\d_u^3 F(\la,u)(u_1,u_1,u_1)+3\d_u^2 F(\la,u)(u_1,u_2)\\
&&+3\d_\la \d_uF(\la,u_0)(u_2-v_2)+3\d_\la^2 \d_uF(\la,u_0)(u_1-v_1)+\d_\la^3 \d_uF(\la,u_0)(u-v).
\end{eqnarray*}
Because of \reff{mkapriori} the maps $G(\la,\cdot): U_{l+m+1}^{2(m+1)} \to U_l$ are Lipschitz continuous on bounded sets, with  Lipschitz constants not depending on $\la$. Hence \reff{smoothdep2a}
yields
\beq
\label{smoothdep2c}
\begin{array}{l}
\mbox{for all } l=0,1,2,\ldots \mbox{ the sequence } G(\la,u_n(\la),u_{n+1}(\la),\ldots,u_n^{(m)}(\la),u_{n+1}^{(m)}(\la))\\
\mbox{converges in } U_l  
\mbox{ for } n \to \infty \mbox{ uniformly in } \la.
\end{array}
\ee

Let us prove \reff{smoothdep2b} by induction on $l$. In order to consider the case $l=0$, we rewrite
\reff{Gm} as
\begin{eqnarray}
&&u_{n+1}^{(m+1)}(\la)-\left(I-\d_uF(\la,u_0)^{-1}\d_uF(\la,u_n(\la))\right)u_{n}^{(m+1)}(\la)\nonumber\\
&&=\d_uF(\la,u_0)^{-1}G(\la,u_n(\la),u_{n+1}(\la),\ldots,u_n^{(m)}(\la),u_{n+1}^{(m)}(\la)).
\label{Rn}
\end{eqnarray}
Because of  \reff{lapriori} and  \reff{smoothdep2c} with $l=0$ the right-hand side of \reff{Rn} converges in $U_0$ for $n \to \infty$ uniformly in $\la$.
Hence, \reff{Ban} and Lemma \ref{A2} imply that \reff{smoothdep2b} with $l=0$ is true.

For the induction step from $l$ to $l+1$ in  \reff{smoothdep2b} the induction assumption is that
for a  fixed $l$ we have 
\beq
\label{indvor1}
\begin{array}{l}
\mbox{for } j=0,1,\ldots,l \mbox{ the sequence }
A^ju_n^{(m+1)}(\la) \mbox{ converges }\\
\mbox{in $U_0$  for $n \to \infty$ uniformly  in  $\la$},
\end{array}
\ee
and the induction claim reads
\beq
\label{indclaim}
A^{l+1}u_n^{(m+1)}(\la) \mbox{ converges in $U_0$  for $n \to \infty$ uniformly in  $\la$}.
\ee
The  two induction assumptions \reff{smoothdep2a} and \reff{indvor1} together yield
\beq
\label{indvor}
\begin{array}{l}
A^ju_n^{(k)}(\la)  \mbox{ converges in $U_0$  for $n \to \infty$ uniformly  in $\la$ \mbox{ for }}\\
\mbox{$j=0,1,\ldots$, $k=0,1,\ldots,m$ and for $j=0,1,\ldots,l$, $k=m+1$.}
\end{array}
\ee
In order  to show that \reff{indvor} implies  \reff{indclaim},
we apply $A^{l+1}$ to 
\reff{Gm}, use \reff{kEig2}  
and get
\begin{eqnarray}
&&\d_uF(\la,u_0)A^{l+1}u_{n+1}^{(m+1)}(\la)+\left(\d_uF(\la,u_n(\la))-\d_uF(\la,u_0)\right)A^{l+1}u_{n}^{(m+1)}(\la)
\nonumber\\
&&=K_{l+1}(\la,u_n(\la))u_{n}^{(m+1)}(\la)+K_{l+1}(\la,u_0)\left(u_{n+1}^{(m+1)}(\la)-u_{n}^{(m+1)}(\la)\right)\nonumber\\
&&+A^{l+1}G(\la,u_n(\la),u_{n+1}(\la),\ldots,u_n^{(m)}(\la),u_{n+1}^{(m)}(\la)).
\label{Sn}
\end{eqnarray}
Because of \reff{mapriori} and \reff{mkapriori} the map 
$$
(u,v) \in U_{l+1} \times U_l \mapsto K_{l+1}(\la,u)v=\d_uF_l(\la,u)+\d_u^2F(\la,u)(A^lu,v) \in U_l
$$
(cf. \reff{kEig2}) is Lipschitz continuous on bounded sets, with a Lipschitz constnat not depending on $\la$.
Therefore $ K_{l+1}(\la,u_n(\la))u_n^{(m+1)}(\la)$ converges in $U_0$ for $n \to \infty$ uniformly in $\la$.
Hence, \reff{Gm} and \reff{indvor}   imply that the right-hand side of \reff{Sn} converges in $U_0$ for $n \to \infty$ uniformly in $\la$.
Therefore, as above, \reff{Ban} and Lemma \ref{A2} imply \reff{indclaim}.
\end{proof}

\section{Time-periodic solutions to first-order hyperbolic systems}
\label{Proof}
\renewcommand{\theequation}{{\thesection}.\arabic{equation}}
\setcounter{equation}{0}

\subsection{Setting and main result}
\label{setting2}

In this section we consider the boundary value problem (\ref{eq:1.1})--(\ref{eq:1.2}) where
 $m$ and $n$ are integers with $1 \le m \le n-1$,  $\eps_0>0$ is fixed, and the coefficient
functions $a_j : [0,1]\times [-\eps_0,\eps_0]\to\R$, $b_j : \R\times [0,1]\times [-\eps_0,\eps_0]\times \R\to\R$
and $r_{jk}:\R\times [-\eps_0,\eps_0] \to \R$ are supposed to be $C^\infty$-smooth. 
Moreover, 
$b_j$ and $r_{jk}$ are $2\pi$-periodic in $t$, and we suppose that
\beq \label{hyp0}
a_j(x,\la)\ne 0  \mbox{ for all }  x\in[0,1], \la \in  [-\eps_0,\eps_0] \mbox{ and } j=1,\dots,n,
\ee 
\beq \label{hyp}
a_j(x,\la)\ne a_k(x,\la) \mbox{ for all } x\in[0,1], \la \in  [-\eps_0,\eps_0]  \mbox{ and } 1\le j\ne k\le n.
\ee
Speaking about solutions to  (\ref{eq:1.1})--(\ref{eq:1.2}),  
we will mean classical solutions, i.e., $C^1$-smooth maps $u: \R \times [0,1] \to \R^n$ which satisfy  (\ref{eq:1.1})--(\ref{eq:1.2})
pointwise.

Let $u_0=(u_{01},\ldots,u_{0n})$ be a 
solution to  (\ref{eq:1.1})--(\ref{eq:1.2}) with $\la=0$. Set
\beq 
\label{b0def}
b^0_{j}(t,x,\la):=\partial_{u_j}b_{j}(t,x,\la,u_0(t,x)),
\ee
$$
R:=\max_{1 \le j \le m} \sum_{k=m+1}^n \sum_{l=1}^m \max_{t,s}|r_{jk}(t,0)r_{kl}(s,0)| \exp\int_0^1  \max_{t,s}
\left(\frac{b_k^0(t,x,0)}{a_k(x,0)}- \frac{b_j^0(s,x,0)}{a_j(x,0)}\right)dx,
$$
$$
S:=\max_{m+1 \le j \le n} \sum_{k=1}^m \sum_{l=m+1}^n \max_{t,s}|r_{jk}(t,0)r_{kl}(s,0)| \exp\int_0^1  \max_{t,s}
\left(\frac{b_j^0(t,x,0)}{a_j(x,0)}- \frac{b_k^0(s,x,0)}{a_k(x,0)}\right)dx.
$$
Suppose
\beq
\label{Fred}
R<1 \mbox{ or } S<1.
\ee 
In the particular case $m=1,n=2$ we have $R=S$, and in this case we set
\begin{eqnarray*}
&&\displaystyle R_0(t):=\left|r_{12}(t,0)r_{21}\left(t+\int_0^1\frac{d \xi}{a_2(\xi,0)},0\right)\right|\\
&&\displaystyle \times \exp \int_0^1 \left(\frac{1}{a_2(x,0)}
b_2^0\left(t+\int_0^x\frac{d \xi}{a_2(\xi,0)},x,0\right)-\frac{1}{a_1(x,0)}
b_1^0\left(t+\int_0^x\frac{d \xi}{a_1(\xi,0)},x,0\right)\right)dx,\nonumber\\
\end{eqnarray*}
\begin{eqnarray*}
&&\displaystyle S_0(t):=\left|r_{21}(t,0)r_{12}\left(t-\int_0^1\frac{d \xi}{a_1(\xi,0)},0\right)\right|\\
&& \displaystyle \times
\exp \int_0^1 \left(\frac{1}{a_2(x,0)}
b_2^0\left(t-\int_x^1\frac{d \xi}{a_2(\xi,0)},x,0\right)-\frac{1}{a_1(x,0)}
b_1^0\left(t-\int_x^1\frac{d \xi}{a_1(\xi,0)},x,0\right)\right)dx
\end{eqnarray*}
and  replace the assumption \reff{Fred} by the weaker assumption
\beq
\label{Fred1}
R(t) \not= 1 \mbox{ for all } t \mbox{ or } S(t) \not= 1 \mbox{ for all } t.
\ee
Write
\beq
\label{infnorm}
\|u\|_0:=\max_{1 \le j \le n} \max_{t,x} |u_j(t,x)|.
\ee

The following theorem is the main result of this section.

\begin{thm}
\label{thm:hopf}
Suppose \reff{hyp0} and \reff{hyp}. Further, suppose \reff{Fred} if $n\ge 3$ and \reff{Fred1} if  $m=1,n=2$.
Finally, suppose that  the linearized problem
\beq\label{evp}
\left.
\begin{array}{rl}
\displaystyle
\partial_tu_j  + a_j(x,0)\partial_xu_j  + \sum_{k=1}^n\d_{u_k}b_j(t,x,0,u_0(t,x))u_k  = 0, &j=1,\dots,n,\\
\displaystyle
u_j(t+2\pi,x)=u_j(t,x), &j=1,\dots,n,\\
\displaystyle
u_j(t,0) = \sum\limits_{k=m+1}^nr_{jk}(t,0)u_k(t,0),& j=1,\ldots,m,\\
\displaystyle
u_j(t,1) = \sum\limits_{k=1}^mr_{jk}(t,0)u_k(t,1), & j=m+1,\ldots,n
\end{array}
\right\}
\ee
does not have a solution $u \not= 0$.

Then there exist $\eps>0$, $\delta >0$ and a $C^\infty$-map
$\hat u : \R \times [0,1] \times[-\eps,\eps] \to \R^n$  such that the following is true:

(i) For all $\la\in [-\eps,\eps]$ the function $\hat u(\cdot,\cdot,\la)$
is a solution  to (\ref{eq:1.1})--(\ref{eq:1.2}).

(ii) If $u$ is a solution to (\ref{eq:1.1})--(\ref{eq:1.2}) with 
$ |\la|+ \|u-u_0\|_{0}<\de$,
then
$u(t,x)=\hat u(t,x,\la)$ for all  $t\in\R$ and $x\in[0,1]$.
In particular, $u_0=\hat u(\cdot,\cdot,0)$ is  $C^\infty$-smooth.
\end{thm}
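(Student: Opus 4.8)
The plan is to rewrite the periodic-boundary value problem (\ref{eq:1.1})--(\ref{eq:1.2}) as an abstract equation $F(\la,u)=0$ fitting the framework of Section~\ref{IFT}, and then to apply Theorem~\ref{thm:IFT}. For the Banach space I take $U_0:=\{u\in C(\R\times[0,1];\R^n):u(t+2\pi,x)=u(t,x)\}$ with the norm \reff{infnorm}, and for the group $T(s)$ the time shift $(T(s)u)(t,x):=u(t+s,x)$, which is strongly continuous on $U_0$ with infinitesimal generator $A=\partial_t$; consequently $U_l$ is the subspace of $u\in U_0$ possessing $l$ continuous $t$-derivatives in $U_0$, with $\|u\|_l=\sum_{k=0}^l\|\partial_t^ku\|_0$. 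The operator $F$ is built by integrating each scalar equation in (\ref{eq:1.1}) along the characteristics of $\partial_t+a_j(x,\la)\partial_x$ (well defined and $C^\infty$ by \reff{hyp0}) and substituting the reflection relations (\ref{eq:1.2}); this produces an operator essentially of the form $F(\la,u)=u-\Phi(\la,u)$, where $\Phi$ is assembled from the reflection coefficients $r_{jk}$ and from integrals of $b_j$ along characteristics, so that $F(\la,u)=0$ (for $u\in U_0$) is equivalent to $u$ being a $2\pi$-periodic, classical (by the smoothness of the data) solution of (\ref{eq:1.1})--(\ref{eq:1.2}). By assumption $F(0,u_0)=0$.

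Next I would check \reff{infsmooth}--\reff{apriori}. Since the coefficients $a_j$ do not depend on $t$, the characteristic flow commutes with the time shift, so $\F(\la,s,u)=T(s)F(\la,T(-s)u)$ has the same structure as $F$ but with every $b_j(t,x,\la,\cdot)$ replaced by $b_j(t+s,x,\la,\cdot)$; as $b_j$, $a_j$, $r_{jk}$ are $C^\infty$ in all their variables, the superposition and integral operators in $\F$ are $C^\infty$ in $(s,u)$, which is \reff{infsmooth}, and here the $C^\infty$-dependence of $b_j$ on $t$ enters. For \reff{ksmooth} and the a priori bounds \reff{apriori} one differentiates $\partial_s^k\partial_u^j\F(\cdot,0,u)$ in $\la$: a $\la$-derivative falls either on a coefficient or on a characteristic time $t_j(x';\cdot,\la)$, and in the latter case it is multiplied by $\partial_{t'}\big(b_j(t',x',\la,u(t',x'))\big)=\partial_tb_j+\sum_k\partial_{u_k}b_j\,\partial_tu_k$, i.e.\ by one $t$-derivative of $u$. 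Hence after $l$ $\la$-derivatives one meets $t$-derivatives of $u,u_1,\dots,u_j$ only up to order $l$, with coefficients bounded uniformly for $\|u-u_0\|_l\le1$; this yields \reff{ksmooth} and the multilinear estimates \reff{apriori}, and the term $\partial_tb_j$ again makes the smoothness of $b_j$ in $t$ indispensable.

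The remaining — and, I expect, the main — obstacle is to verify \reff{FH} and \reff{coerz} for $\partial_uF(\la,u_0)$, uniformly in $\la$; here I would combine the abstract approach of \ref{AppendixB} with the spectral analysis of \cite{KR1}. One has $\partial_uF(\la,u_0)=I-M(\la)$, with $M(\la)$ the weighted composition/integral operator governed by the linearised problem (\ref{evp}) with $\la$-dependent coefficients. Expanding in Fourier modes in $t$ turns the equation $\partial_uF(\la,u_0)u=f$ into a family of two-point boundary value problems for linear ODE systems in $x$, indexed by $k\in\Z$, whose solvability is controlled by the distance of $1$ from the spectrum of a $k$-dependent monodromy operator built from the matrices $(r_{jk}(\cdot,\la))$ and the exponential weights occurring in the quantities $R$ and $S$ above. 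The point of \cite{KR1} is that the nonresonance condition \reff{Fred} (respectively \reff{Fred1} when $m=1$, $n=2$) yields a lower bound for this distance that is uniform in $k$ and in $\la$ near $0$ — exactly what rules out small divisors — giving closed range, the coercivity \reff{coerz} with a $\la$-independent $c_0$ after possibly shrinking $\eps_0$, and the Fredholm-index-zero property \reff{FH}. The hypothesis that (\ref{evp}) has no solution $u\neq0$ makes $\partial_uF(0,u_0)$ injective, hence bijective by index zero, and by the uniform estimates this persists for all small $\la$.

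With \reff{zero}, \reff{infsmooth}--\reff{apriori}, \reff{FH} and \reff{coerz} established, Theorem~\ref{thm:IFT} furnishes $\eps\in(0,\eps_0]$, $\delta>0$ and a map $\la\in[-\eps,\eps]\mapsto\hat u(\la)\in U_k$ which for every $k$ is $C^\infty$ and solves $F(\la,\hat u(\la))=0$, with $\hat u(\la)$ the unique solution in $\{u:\|u-u_0\|_0\le\delta\}$. Thus each $\hat u(\la)$ is $C^\infty$ in $t$ and $\la\mapsto\hat u(\la)$ is $C^\infty$ into every space of $C^l$-in-$t$ functions. A bootstrap in $x$, solving $\partial_xu_j=-a_j(x,\la)^{-1}\big(\partial_tu_j+b_j(t,x,\la,u)\big)$ (allowed by \reff{hyp0}) and using the smoothness already obtained in $t$ and $\la$, upgrades this to joint $C^\infty$-smoothness in $(t,x,\la)$; set $\hat u(t,x,\la):=(\hat u(\la))(t,x)$. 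Then (i) is immediate, (ii) is the uniqueness clause of Theorem~\ref{thm:IFT} (the $U_0$-ball being the $\|\cdot\|_0$-ball of \reff{infnorm}), and since $u_0$ is the $\la=0$ solution, uniqueness gives $u_0=\hat u(\cdot,\cdot,0)$, which is therefore $C^\infty$.
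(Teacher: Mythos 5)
Your overall skeleton (the space $U_0$ of continuous time-periodic functions, the time-shift group with $A=\partial_t$, reduction of (\ref{eq:1.1})--(\ref{eq:1.2}) to a fixed-point equation via integration along characteristics, and then Theorem \ref{thm:IFT}) coincides with the paper's, and your verification of \reff{infsmooth}--\reff{apriori} is essentially right. The genuine gap is in the step you yourself identify as the main obstacle: the verification of \reff{FH} and \reff{coerz} uniformly in $\la$. Your argument expands $\partial_uF(\la,u_0)u=f$ in Fourier modes in $t$ and claims this decouples into a family of ODE boundary value problems indexed by $k\in\Z$. This decoupling fails here: the linearized coefficients $\partial_{u_k}b_j(t,x,\la,u_0(t,x))$ and the reflection coefficients $r_{jk}(t,\la)$ depend on $t$ (and the theorem explicitly does not assume $u_0$ is close to time-independent), so the Fourier modes couple and there is no $k$-indexed family of monodromy problems to estimate. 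Moreover, the uniformity in $\la$ that you attribute to \cite{KR1} is precisely the delicate point — $\la\mapsto\CC(\la)$ is not continuous in the operator norm (Remark \ref{rema1}) — and cannot simply be cited; in the paper the hypotheses \reff{Fred}/\reff{Fred1} are used in a completely different, non-Fourier way (Lemma \ref{prop1}: evaluating the composed boundary operator at $x=1$ or $x=0$ and solving that trace system by a contraction, giving $\|(I-\CC(\la))^{-1}\|\le d$ uniformly in $\la$).

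There is a second, structural omission that blocks the route you gesture at ("combine with \ref{AppendixB}"): to apply Lemma \ref{B2} one needs the smoothing estimate \reff{absch}/\reff{Vabsch} for $\D(\la)^2$ and $\D(\la)\CC(\la)$ and the weak-continuity property \reff{Hlim}. The paper can prove the smoothing estimate (Lemma \ref{B4anw}) only because the fixed-point equation is written with the artificially introduced comparison function $v$ and the weights $c_j$, which make the diagonal part of the partial integral operator $\D(\la)$ vanish; then the off-diagonal terms involve two \emph{different} characteristic families and the transversality hypothesis \reff{hyp} produces the factor $\bigl(\tfrac{1}{a_j}-\tfrac{1}{a_k}\bigr)^{-1}$ that permits integration by parts, trading $\partial_t v$ for $v$. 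In your formulation $F(\la,u)=u-\Phi(\la,u)$ without this device, $\partial_uD$ retains diagonal terms (integration along the \emph{same} characteristic), the integration-by-parts gain is unavailable, and neither compactness/Fredholmness nor the coercivity \reff{coerz} (which in the paper also needs the weak-continuity Lemma \ref{B3anw} and the compactness-contradiction argument of Lemma \ref{B2}(iii), with injectivity at $\la=0$ coming from the nondegeneracy of \reff{evp}) follows. The fact that \reff{hyp} is never used in your proposal is a symptom of this missing core. The remaining parts (application of Theorem \ref{thm:IFT}, the bootstrap in $x$ to get joint $C^\infty$-smoothness, and the uniqueness statement (ii)) are fine once \reff{FH} and \reff{coerz} are actually established.
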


\begin{rem}\rm
\label{mn1}
Consider   (\ref{eq:1.1})--(\ref{eq:1.2}) with $m=n=1$, 
$a_1(x,\la)=1+\la^2$ and $b_1(t,x,\la,u)=b(t,x)$. 
Then   (\ref{eq:1.1})--(\ref{eq:1.2}) reads 
$$
\d_tu+(1+\la^2)\d_xu+b(t,x)=0,\; u(t+2\pi,x)=u(t,x), \; u(t,0)=0.
$$ 
The solution is explicitely given by the formula
$$
u(t,x)=-\frac{1}{1+\la^2}\int_0^xb\left(\frac{\xi-x}{1+\la^2}+t,\xi\right) d\xi.
$$
It follows that $u$ depends smoothly on $\la$ if and only if $b$ is smooth in $t$. In other words, solutions to   (\ref{eq:1.1})--(\ref{eq:1.2})
do not depend pointwise smoothly on $\la$ if the functions $b_j(\cdot,x,\la,u)$ are not smooth, in general.
\end{rem}

\begin{rem}\rm
\label{rema}
Consider   (\ref{eq:1.1})--(\ref{eq:1.2}) with $m=1, n=2$, $r_{12}(t,\la)=r_{21}(t,\la)=1$, $a_1(x,\la)=-a_2(x,\la)=1+\la^2$ and $b_j(t,x,\la,u)=0$, i.e.
$$
\d_tu_1+(1+\la^2)\d_xu_1=\d_tu_2-(1+\la^2)\d_xu_2=0,\; u_1(t,0)-u_2(t,0)= u_1(t,1)-u_2(t,1)=0.
$$
Integration along characteristic curves combined with the boundary conditions gives (cf. also (\ref{rep1})--(\ref{rep2}))
\begin{eqnarray}
&&u_1(t,x)=u_2(t-(1+\la^2) x,0), \label{1a}\\
&&u_2(t,x)=u_1(t+(1+\la^2)(x-1),1). \label{2a}
\end{eqnarray}
Inserting \reff{2a} into \reff{1a} and putting $x=1$, we get
\beq
\label{3}
u_1(t,1)=u_1(t-2(1+\la^2),1).
\ee
If $(1+\la^2)/2\pi$ is irrational, then the functional equation \reff{3} does not have nontrivial continuous solutions. If 
$$
\frac{1+\la^2}{2\pi}=\frac{p}{q} \mbox{ with } p \in \Z \mbox{ and } q \in \N,
$$
then any $2\pi/q$-periodic  function is a solution to \reff{3}, and hence there are infinitely many solutions.
This means that  in this case all assumptions of Theorem \ref{thm:hopf} (with $u_0=0$) are fulfilled with the exception of  \reff{Fred1} 
(because of $R_0(t)=S_0(t)=1$ for all $t$), but the conclusion of  Theorem \ref{thm:hopf} is wrong. 
\end{rem}

\subsection{Proof of Theorem \ref{thm:hopf}}
\label{Proof3.1}

In this subsection we prove Theorem \ref{thm:hopf}. Hence, we suppose all assumptions of this theorem to be fulfilled.
In order to use  Theorem \ref{thm:IFT},
we are going to represent problem  (\ref{eq:1.1})--(\ref{eq:1.2}) 
in the setting of Section \ref{IFT}.
Let us introduce the corresponding objects:

We denote by $U_0$ the space of all continuous maps $u:\R \times [0,1] \to \R^n$ such that $u(\cdot,x)$ is $2\pi$-periodic,
with the norm \reff{infnorm}. The $C_0$-group $T(s)$ on $U_0$ is defined by
$$
\left(T(s)u\right)(t,x):=u(t+s,x).
$$
Hence, the corresponding infinitesimal generator is
$
A:=\d_t,
$
and the domain of definition of $A^l$ is
$$
U_l:=\{u \in U_0:\; \d^j_tu \in U_0 \mbox{ for all } j=0,1,\ldots,l\}.
$$
The spaces $U_l$  are Banach spaces with the norms
$$
\|u\|_l:=\sum_{j=0}^l\|\d_t^ju\|_0.
$$
Let  us introduce characteristics of the system \reff{eq:1.1}
\beq
\label{charom}
\tau_j(t,x,\xi,\la):=t+\int_x^\xi \frac{d\eta}{a_j(\eta,\la)}.
\ee
For  $t \in \R$, $x,\xi \in [0,1]$, $\la \in [-\eps_0,\eps_0]$ and $u,v \in U_0$ we define 
\beq
\label{cdef}
c_j(t,x,\xi,\la,v):=\exp \int_x^\xi
\frac{\partial_{u_j}b_j(\tau_j(t,x,\eta,\la),\eta,\la,v(\tau_j(t,x,\eta,\la),\eta))}{a_{j}(\eta,\la)}\,d\eta.
\ee
Further,  for  $t \in \R$, $x \in [0,1]$, $\la \in [-\eps_0,\eps_0]$ and $u,v \in \R^n$ we define 
\beq
\label{fdef}
f_j(t,x,\la,u,v):=\partial_{u_j}b_j(t,x,\la,v)u_j-b_j(t,x,\la,u).
\ee

The following calculations, the so-called integration along the characteristics of the hyperbolic system \reff{eq:1.1}, are well-known:
Let $u$ be a solution to \reff{eq:1.1}--\reff{eq:1.2} and  $v \in U_0$ be arbitrary fixed. Then
$$
\partial_tu_j(t,x)  + a_j(x,\la)\partial_xu_j(t,x) +\d_{u_j}b_j(t,x,\la,v(t,x))u_j(t,x)  = f_j(t,x,\la,u(t,x),v(t,x)),
$$
and hence
\begin{eqnarray*}
&&\frac{d}{d\xi}u_j(\tau_j(t,x,\xi,\la),\xi)+\frac{\d_{u_j}b_j(\tau_j(t,x,\xi,\la),\xi,\la,v(\tau_j(t,x,\xi,\la),\xi))}{a_j(\xi,\la)}u_j(\tau_j(t,x,\xi,\la),\xi)\\
&&=\frac{f_j(\tau_j(t,x,\xi,\la),\xi,\la,u(\tau_j(t,x,\xi,\la),\xi),v(\tau_j(t,x,\xi,\la),\xi))
}{a_j(\xi,\la)}.
\end{eqnarray*}
Integrating this in $\xi$ from zero to $x$ for $j=1,\ldots,m$ and  from $x$ to one for $j=m+1,\ldots,n$, and, using the boundary conditions \reff{eq:1.2},
we get
\begin{eqnarray}
\label{rep1}
\lefteqn{
u_j(t,x)=c_j(t,x,0,\la,v)\sum_{k=m+1}^nr_{jk}(\tau_j(t,x,0,\la),\la)u_k(\tau_j(t,x,0,\la),0)}\nonumber\\
&&+\int_0^x \frac{c_j(t,x,\xi,\la,v)}{a_j(\xi,\la)}f_j(\tau_j(t,x,\xi,\la),\xi,\lambda,u(\tau_j(t,x,\xi,\lambda),\xi),v(\tau_j(t,x,\xi,\lambda),\xi))d\xi,\nonumber\\
&&j=1,\ldots,m,
\end{eqnarray}
\begin{eqnarray}
\label{rep2}
\lefteqn{
u_j(t,x)=c_j(t,x,1,\la,v)\sum_{k=1}^m r_{jk}(\tau_j(t,x,1,\la),\la)u_k(\tau_j(t,x,1,\la),1)}\nonumber\\
&&-\int_x^1 \frac{c_j(t,x,\xi,\la,v)}{a_j(\xi,\la)}f_j(\tau_j(t,x,\xi,\la),\xi,\la,u(\tau_j(t,x,\xi,\la),\xi),v(\tau_j(t,x,\xi,\la),\xi))d\xi,\nonumber\\
&&j=m+1,\ldots,n.
\end{eqnarray}
And vice versa: If $v\in U_1$ is given, then any  solution $u\in U_1$ to \reff{rep1}-\reff{rep2}  
is a solution to \reff{eq:1.1}--\reff{eq:1.2} (in particular, $u$ is differentiable not only with respect to $t$, but also with respect to $x$). 
Remark that the right-hand sides of  \reff{rep1}-\reff{rep2} depend on the artificially introduced function $v$, but the solutions $u$ to 
\reff{rep1}-\reff{rep2} do not.

If we set $b(t,x,\la,u):=(b_1(t,x,\la,u),\ldots,b_n(t,x,\la,u))$ and  
\beq
\label{fdef1}
f(t,x,\la,u,v):=(f_1(t,x,\la,u,v),\ldots,f_n(t,x,\la,u,v)),
\ee
then the nonlinear map $f(t,x,\la,\cdot,v)$ is the difference of   the diagonal part of  the linear map
$\d_ub(t,x,\la,u,v)|_{u=v}$ and of the
nonlinear map $b(t,x,\la,\cdot)$. Hence,
the diagonal part of $\d_uf(t,x,\la,u,v)|_{u=v}$ vanishes, i.e. 
$
\partial_{u_j}f_j(t,x,\la,u,v)|_{u=v}=0,
$
cf. \reff{fdef}. This will be used later on in the proof of the key Lemma \ref{B4anw}.

For $\la \in[-\eps_0,\eps_0]$ and $v \in U_0$ we define linear bounded operators $C(\la,v): U_0 \to U_0$ by
\beq
\label{Cdef}
\left(C(\la,v)u\right)_j(t,x):=
\left\{
\begin{array}{l}
\displaystyle
c_j(t,x,0,\la,v)\sum_{k=m+1}^nr_{jk}(\tau_j(t,x,0,\la),\la)u_k(\tau_j(t,x,0,\la),0) \\\,\;\;\;\;\;\;\;\;\;\;\;\;\;\;\;\;\;\;\;\;\;\;\;\;\;\;\;\;\;\;\;\;\;\;\;\;\;\;\;\;\;\;\;\;\;
\mbox{ for }j=1,\ldots,m,\\
\displaystyle
c_j(t,x,1,\la,v)\sum_{k=1}^mr_{jk}(\tau_j(t,x,1,\la),\la)u_k(\tau_j(t,x,1,\la),1) \\\,\;\;\;\;\;\;\;\;\;\;\;\;\;\;\;\;\;\;\;\;\;\;\;\;\;\;\;\;\;\;\;\;\;\;\;\;\;\;\;\;\;\;\;\;\;
\mbox{ for }
j=m+1,\ldots,n.
\end{array}
\right.
\ee
Further, for $\la \in[-\eps_0,\eps_0]$ we define nonlinear operators  $D(\la,\cdot,\cdot): U_0\times U_0 \to U_0$ by
\begin{eqnarray}
\label{Fdef}
&&D(\la,u,v)_j(t,x):=\nonumber\\
&&
\int_{x_j}^x \frac{c_j(t,x,\xi,\la,v)}{a_j(\xi,\la)}f_j(\tau_j(t,x,\xi,\lambda),\xi,\lambda,u(\tau_j(t,x,\xi,\lambda),\xi),v(\tau_j(t,x,\xi,\lambda),\xi))d\xi.
\end{eqnarray}
Here and in what follows we write
\beq
\label{xj}
x_j:=\left\{
\begin{array}{rcl}
0 & \mbox{for} & j=1,\dots,m,\\
1 & \mbox{for} & j=m+1,\dots,n.
\end{array}
\right.
\ee
In this notation the system \reff{rep1}--\reff{rep2} can be written as  the operator equation
\beq
\label{abstract}
u=C(\la,v)u+D(\la,u,v).
\ee
Remark again that the right-hand side of  \reff{abstract} depends on the artificially introduced function $v$, but the solutions $u$ to 
\reff{abstract} do not. In particular, we have
\beq
\label{dv}
\d_vC(0,v)(u_0,w)+\d_vD(\la,u_0,v)w=0 \mbox{ for all } v,w \in U_0.
\ee

\begin{lem}
\label{prop}
The maps $(s,u,v) \in \R \times U_0^2 \mapsto T(s)C(\la,T(-s)v)T(-s)u \in U_0$ and
$(s,u,v) \in \R \times U_0^2 \mapsto T(s)D(\la,T(-s)u,T(-s)v) \in U_0$ are $C^\infty$-smooth for all $\la \in  [-\eps_0,\eps_0]$.
\end{lem}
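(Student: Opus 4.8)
The statement to be proved (Lemma \ref{prop}) is that, for each fixed $\la\in[-\eps_0,\eps_0]$, the maps
$$
(s,u,v)\mapsto T(s)C(\la,T(-s)v)T(-s)u\quad\text{and}\quad (s,u,v)\mapsto T(s)D(\la,T(-s)u,T(-s)v)
$$
are $C^\infty$ from $\R\times U_0^2$ into $U_0$. The point of introducing the group conjugation is precisely that it shifts the $t$-variable, and the coefficient data $a_j,b_j,r_{jk}$ are $C^\infty$ in \emph{all} variables including $t$; so after conjugation the resulting maps become smooth even though $C(\la,\cdot)$ and $D(\la,\cdot,\cdot)$ by themselves are not differentiable in $s$-shift-independent data. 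Concretely, I would first compute the conjugated operators explicitly. Using $\tau_j(t,x,\xi,\la)=t+\int_x^\xi d\eta/a_j(\eta,\la)$ and the definitions \reff{cdef}, \reff{fdef}, \reff{Cdef}, \reff{Fdef}, one checks by a direct substitution $t\mapsto t+s$ that
$$
\bigl(T(s)C(\la,T(-s)v)T(-s)u\bigr)_j(t,x)=c_j(t+s,x,x_j,\la,v)\sum_k r_{jk}(\tau_j(t+s,x,x_j,\la),\la)\,u_k(\tau_j(t,x,x_j,\la),x_j),
$$
and similarly for $D$, where the shift $s$ enters only through the \emph{first two slots} of $r_{jk}$ and inside $c_j$ (which involves $\d_{u_j}b_j$ evaluated along the characteristic), while the unknown function arguments $u$, $v$ are evaluated at $\tau_j(t,x,\cdot,\la)$ with no $s$-shift. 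So after conjugation the dependence on $s$ is channelled entirely through the smooth coefficient functions, not through the (merely continuous) Banach-space arguments.

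The second and main step is to verify $C^\infty$-smoothness of these explicit formulas as maps into $U_0=C_{2\pi}(\R\times[0,1];\R^n)$. I would organize this by the standard Banach-space calculus fact that a composition/substitution operator of the form $(s,u,v)\mapsto\bigl[(t,x)\mapsto \Phi(s,t,x,u(\sigma(t,x),\cdot),v(\sigma(t,x),\cdot))\bigr]$ is smooth when $\Phi$ is a smooth function of finitely many real arguments, the "inner" substitution maps $(t,x)\mapsto(\sigma(t,x))$ are smooth and independent of $(s,u,v)$, and one is working in spaces of continuous functions (so Nemytskii operators built from $C^\infty$ functions are $C^\infty$). For the integral term $D$ one additionally integrates in $\xi$ over $[x_j,x]$; smoothness is preserved because the integrand depends smoothly on the parameter $(s,u,v)$ uniformly and the domain endpoint $x$ enters smoothly. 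I would carry this out by differentiating under the integral and under the "evaluation at $(t,x)$" operation, noting that each derivative in $s$, $u$, or $v$ produces again an expression of the same structural type (coefficient functions differentiated in their real arguments, multiplied against the still-continuous $u_k$, $v_k$ evaluated along fixed characteristics), hence again a bounded multilinear map into $U_0$; and then invoking the converse-to-Taylor argument or simply an induction on the order of differentiation, exactly as was done for $F$ in Lemma \ref{lemln3}. The periodicity in $t$ and the $2\pi$-periodicity of $b_j$, $r_{jk}$ guarantee the outputs land back in $U_0$.

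The step I expect to be the main obstacle is not conceptual but bookkeeping: one must argue \emph{uniformity}, i.e.\ that all the constants controlling the multilinear derivative bounds are locally uniform in $(s,u,v)$ and in $\la$, so that the formally-computed derivatives are genuinely Fréchet derivatives and depend continuously on the base point. Here one uses compactness of $[0,1]$ in $x,\xi$ and of $[-\eps_0,\eps_0]$ in $\la$, together with the hypothesis \reff{hyp0} that $a_j$ is bounded away from zero (so that $1/a_j$ and the characteristics $\tau_j$ are smooth with bounded derivatives), to get that the coefficient functions $c_j$, $f_j$ and all their derivatives are bounded on the relevant sets. A minor subtlety is that $s$ ranges over all of $\R$, not a compact set, but since $b_j$ and $r_{jk}$ are $2\pi$-periodic in $t$ and $\tau_j(t+s,x,x_j,\la)$ differs from $\tau_j(t,x,x_j,\la)$ by $s$, all $s$-dependence is through $2\pi$-periodic functions, so the bounds are in fact uniform in $s\in\R$. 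Once these uniform estimates are in place, smoothness follows from the same converse-to-Taylor / induction machinery already invoked in Section \ref{IFT}, and I would simply cite that rather than repeat it.
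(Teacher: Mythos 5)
Your proposal is correct and is essentially the paper's own argument: using $\tau_j(t+s,x,\xi,\la)=\tau_j(t,x,\xi,\la)+s$ you write the conjugated operators explicitly, observe that $s$ enters only through the smooth, $2\pi$-periodic data $b_j$ and $r_{jk}$ while $u$ and $v$ are evaluated along the unshifted characteristics, and then conclude by the standard $C^\infty$-smoothness of Nemytskii (superposition) operators built from $C^\infty$ functions on spaces of continuous functions. Only a notational slip: the conjugated coefficient is not $c_j(t+s,x,x_j,\la,v)$ (that would shift the argument of $v$ as well) but the mixed coefficient the paper denotes $c^s_j(t,x,x_j,\la,v)$, in which only the $t$-slot of $\partial_{u_j}b_j$ is shifted by $s$ --- which is exactly what your subsequent verbal description states.
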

\begin{proof} From \reff{charom} we have $\tau_j(t+s,x,\xi,\la)=\tau_j(t,x,\xi,\la)+s$.
Then \reff{cdef}, \reff{Cdef} and  \reff{Fdef} yield
\begin{eqnarray*}
&&\left(T(s)C(\la,T(-s)v)T(-s)u\right)_j(t,x)=\\
&&c^s_j(t,x,0,\la,v)\sum_{k=m+1}^nr_{jk}(\tau_j(t,x,0,\la)+s,\la)u_k(\tau_j(t,x,0,\la),0) 
\mbox{ for }j=1,\ldots,m,
\end{eqnarray*}
\begin{eqnarray*}
&&\left(T(s)C(\la,T(-s)v)T(-s)u\right)_j(t,x)=\\
&&c^s_j(t,x,1,\la,v)\sum_{k=1}^mr_{jk}(\tau_j(t,x,1,\la)+s,\la)u_k(\tau_j(t,x,1,\la),1) 
\mbox{ for }
j=m+1,\ldots,n
\end{eqnarray*}
and
\begin{eqnarray*}
&&T(s)D(\la,T(-s)u,T(-s)v)_j(t,x)=\nonumber\\
&&
\int_{x_j}^x \frac{c^s_j(t,x,\xi,\la,v)}{a_j(\xi,\la)}f_j(\tau_j(t,x,\xi,\lambda)+s,\xi,\lambda,u(\tau_j(t,x,\xi,\lambda),\xi),v(\tau_j(t,x,\xi,\lambda),\xi))d\xi
\end{eqnarray*}
with
$$
c^s_j(t,x,\xi,\la,v):=\exp \int_x^\xi
\frac{\partial_{u_j}b_j(\tau_j(t,x,\eta,\la)+s,\eta,\la,v(\tau_j(t,x,\eta,\la),\eta))}{a_{j}(\eta,\la)}\,d\eta.
$$
Since the functions $b_j$ are smooth,
well-known differentiability properties of Nemytskii operators (see, e.g., \cite[Lemma 2.4.18]{Abraham}, 
\cite[Lemma 6.1]{Amann}) yield the claim of the lemma.
\end{proof}

\begin{rem}\rm
\label{nonsm}
Suppose the coefficients $a_j$ to be  independent on $\la$. Then also the characteristics $\tau_j$  are independent on $\la$, and hence  the  maps 
$\la \mapsto C(\la,v) \in  {\cal L}(U_0)$ and $\la\mapsto D(\la,u,v) \in  U_0$ 
are smooth for all $u,v \in U_0$. Therefore, in this case for $\la \approx 0$ the equation \reff{abstract} can be solved with respect to $u \approx u_0$ 
by means of the classical implicit function theorem under natural assumptions.

But,  if  the coefficients $a_j$ depend on $\la$, then, unfortunately, the maps $\la 
\mapsto C(\la,v)u \in  U_0$ and $\la\mapsto D(\la,u,v) \in  U_0$ 
are not smooth
(if $u$ and $v$ are only continuous and not smooth),  in general.
This makes the question, if the data-to-solution map corresponding to \reff{abstract} 
is smooth, very delicate in the case if  the coefficients $a_j$ depend on $\la$.
\end{rem}

\begin{rem}\rm
\label{tabh}
If the  coefficients $a_j$ would depend on $t$, then the characteristics $\tau_j$ are defined as the solutions to the initial value problem
$$
\d_\xi\tau_j(t,x,\xi,\la)=\frac{1}{a_j(\tau_j(t,x,\xi,\la),\xi,\la)},\; \tau_j(t,x,x,\la)=t.
$$
But then $\tau_j(t+s,x,\xi,\la)\not=\tau_j(t,x,\xi,\la)+s$, in general, and the proof of  Lemma \ref{prop} could not be generalized to this case, in general.
To get a result similar to Theorem~\ref{thm:hopf}, but with $t$-dependent  coefficients $a_j$,
 it turns out that additional conditions of the type \reff{Fred} should be assumed. The number of those conditions is $k+1$ in order  the data-to-solution map 
to be $C^k$-smooth (see \cite{kmit}).
\end{rem}

In what follows we will use the operators $\CC(\la), \D(\la) \in \LL(U_0)$ defined for $\la \in   [-\eps_0,\eps_0]$ by
\beq
\label{CDdef}
\CC(\la):=C(\la,u_0),\; \D(\la):=\d_uD(\la,u,u_0)|_{u=u_0}.
\ee
From   \reff{Cdef} and \reff{Fdef} it follows that
\beq
\label{CCdef}
\left(\CC(\la)u\right)_j(t,x)=
\left\{
\begin{array}{l}
\displaystyle
\sum_{k=m+1}^nc_{jk}(t,x,\la)u_k(\tau_j(t,x,0,\la),0)
\mbox{ for }j=1,\ldots,m,\\
\displaystyle
\sum_{k=1}^mc_{jk}(t,x,\la)u_k(\tau_j(t,x,1,\la),1) 
\mbox{ for }
j=m+1,\ldots,n
\end{array}
\right.
\ee
with
\beq
\label{tildec}
c_{jk}(t,x,\la):=c_j(t,x,x_j,\la,u_0)r_{jk}(\tau_j(t,x,x_j,\la),\la)
\ee
and
\beq
\label{dDdef}
\left[\D(\la)u\right]_j(t,x)=\sum_{k\not=j}\int_{x_j}^xd_{jk}(t,x,\xi,\la)u_k(\tau_j(t,x,\xi,\lambda),\xi)d\xi
\ee
with
\beq
\label{tilded}
d_{jk}(t,x,\xi,\la):=
-\frac{c_j(t,x,\xi,\la,u_0)}{a_j(\xi,\la)}\d_{u_k}b_j(\tau_j(t,x,\xi,\lambda),\xi,\lambda,u_0(\tau_j(t,x,\xi,\lambda),\xi)).
\ee

\begin{lem}
\label{prop1}
Suppose \reff{Fred} if $n\ge 3$ and  \reff{Fred1} if $m=1,n=2$.
Then there exist $\eps_1 \in (0,\eps_0]$ and $d>0$ such that for all $\la \in  [-\eps_1,\eps_1]$ the operator
$I-\CC(\la)$ is bijective from $U_0$ to $U_0$ and $\left\|(I-\CC(\la))^{-1}\right\|_{\LL(U_0)} \le d$.
\end{lem}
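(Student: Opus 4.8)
The plan is to show that $I - \CC(\la)$ is invertible on $U_0$ by analyzing the operator $\CC(\la)$ explicitly. The key observation is that $\CC(\la)$ maps a function $u$ to a combination of boundary traces $u_k(\cdot,0)$ (for $k = m+1,\dots,n$) and $u_k(\cdot,1)$ (for $k=1,\dots,m$) transported back along characteristics and multiplied by the coefficients $c_{jk}$ from \reff{tildec}. So the structure of the fixed-point problem $(I-\CC(\la))u = g$ is: the values of $u$ in the interior are determined by $g$ together with the boundary traces of $u$, and the boundary traces themselves satisfy a closed system of functional equations obtained by restricting \reff{CCdef} to $x = 0$ (for $j > m$) and $x = 1$ (for $j \le m$). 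First I would write out this reduced system for the boundary traces $\bigl(u_j(\cdot,1)\bigr)_{j\le m}$ and $\bigl(u_j(\cdot,0)\bigr)_{j>m}$: it has the form $w = \MM(\la)w + h$, where $\MM(\la)$ is a bounded operator on the space of $2\pi$-periodic continuous functions built from the products $r_{jk}(\cdot,0)\, c_j(\cdot,x_j,\la,u_0)$ composed with characteristic shifts. One checks $\MM(\la)$ is the $\la$-analogue of the operator whose norm is estimated by $R$ and $S$.

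The crux is the norm estimate. For $\la = 0$, the composition of two passes (from one boundary to the other and back, as in the $m=1,n=2$ computation \reff{1a}--\reff{2a}) has operator norm on $C_{2\pi}$ bounded by $R$ (respectively $S$), because $|r_{jk}r_{kl}|$ times $\exp\int_0^1(b_k^0/a_k - b_j^0/a_j)$ is exactly the gain factor along a full round trip, and the characteristic shifts are isometries of $C_{2\pi}$ (here I use that $a_j$ is independent of $t$, which is what makes $\tau_j(t+s,\cdot) = \tau_j(t,\cdot)+s$ and hence the shift operators bounded with norm one on the sup-norm). Thus under \reff{Fred}, the square $\MM(0)^2$ (suitably grouped as "forward-backward" blocks) is a contraction, so $I - \MM(0)$ is invertible; by continuity of $\la \mapsto \MM(\la)$ in operator norm (the coefficients $c_j$, $r_{jk}$ depend continuously on $\la$ uniformly in the other variables since $a_j, b_j, r_{jk}$ are smooth and $u_0$ is fixed and continuous), the same holds for $\la$ in a small interval $[-\eps_1,\eps_1]$, with a uniform bound $\|(I-\MM(\la))^{-1}\| \le d'$. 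In the degenerate case $m=1$, $n=2$, the relevant round-trip operator is a weighted shift whose spectral radius equals $\max_t R_0(t)$ (or $\max_t S_0(t)$), so \reff{Fred1} forces invertibility of $I - \MM(\la)$ via the standard spectral-radius formula for weighted composition operators by an iterate estimate; again this persists for small $\la$.

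Having inverted $I - \MM(\la)$ on the boundary traces, I would reconstruct the full solution: given $g \in U_0$, extract the boundary data of $g$, solve $w = \MM(\la) w + h$ where $h$ is the boundary part coming from $g$, then plug $w$ back into \reff{CCdef} to define $u$ in the interior, and verify $(I-\CC(\la))u = g$ and that $u$ is unique. The bound $\|(I-\CC(\la))^{-1}\|_{\LL(U_0)} \le d$ follows by combining the uniform bound $d'$ on the boundary inverse with the obvious sup-norm bounds on the transport-and-multiply operations, which are uniform in $\la \in [-\eps_1,\eps_1]$ because the coefficients $c_{jk}$ are uniformly bounded there.

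The main obstacle is the norm estimate in the degenerate case $m=1,n=2$: there the round-trip operator is not a contraction but a weighted composition operator, and one must argue via its spectral radius that $\eps$ raised to the number of laps decays (or does not) — concretely, the $N$-fold iterate has norm $\asymp \prod$ of shifted copies of $R_0$ along the orbit, and \reff{Fred1} (all values $\ne 1$, hence by compactness uniformly bounded away from $1$ on one side) is exactly what is needed to make an iterate have norm $< 1$. The companion paper \cite{KR1} handles precisely this point, so I would invoke or adapt that argument. Everything else is bookkeeping with integration along characteristics, already set up in \reff{charom}--\reff{dDdef}.
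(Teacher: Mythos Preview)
Your overall strategy---reduce to a closed system on the boundary traces, show the resulting operator (or its square) is a contraction, reconstruct the interior---is exactly what the paper does. But there is a genuine gap in how you pass from $\la=0$ to small $\la$.

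You write that the extension to $\la\in[-\eps_1,\eps_1]$ follows ``by continuity of $\la \mapsto \MM(\la)$ in operator norm.'' This is false. Your boundary operator $\MM(\la)$ is not just multiplication by the coefficients $c_{jk}$; it also involves the argument shifts $t\mapsto \tau_j(t,x_j,x_k,\la)=t+\int_{x_j}^{x_k}\frac{d\eta}{a_j(\eta,\la)}$, and the shift amount depends on $\la$. Shift operators on $C_{2\pi}$ are \emph{not} continuous in the uniform operator norm as functions of the shift parameter (this is precisely the phenomenon behind Remark~\ref{rema1}: the map $\la\mapsto\CC(\la)\in\LL(U_0)$ is discontinuous). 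So you cannot conclude that $\|\MM(\la)^2\|<1$ for small $\la$ from $\|\MM(0)^2\|<1$ by perturbation.

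The paper sidesteps this by never invoking operator continuity: it bounds $\|K(\la)L(\la)\|$ directly by the explicit quantity
\[
R(\la)=\max_{j\le m}\sum_{k>m}\sum_{l\le m}\max_{t,s}|r_{jk}(t,\la)r_{kl}(s,\la)|\exp\int_0^1\max_{t,s}\Bigl(\tfrac{b_k^0(t,x,\la)}{a_k(x,\la)}-\tfrac{b_j^0(s,x,\la)}{a_j(x,\la)}\Bigr)dx,
\]
obtained by using that the characteristic shifts are isometries of the sup-norm and then taking suprema over \emph{independent} time variables $t,s$. This $R(\la)$ is a continuous function of $\la$ (it is built from pointwise maxima of smooth functions, with no shifts appearing), and $R(0)=R<1$ by hypothesis, so $R(\la)<1$ for small $\la$. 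That is the correct replacement for your continuity argument.

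For the case $m=1,n=2$: your spectral-radius description is not quite right (the spectral radius of a weighted shift $v\mapsto c(\cdot)v(\cdot+\alpha)$ on $C_{2\pi}$ is not $\max_t|c(t)|$ in general). What actually makes \reff{Fred1} work is simpler: since $R_0$ is continuous and periodic, $R_0(t)\ne 1$ for all $t$ forces either $R_0<1$ everywhere or $R_0>1$ everywhere; in the first case the round-trip operator is a contraction, in the second its inverse is. The same dichotomy, with the same explicit bound trick as above, handles $\la\ne 0$.
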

\begin{proof}
We will follow the ideas of  \cite[Lemma 3.1]{KR2} and  \cite[Lemma 2.2 (i)]{KR3}.

Denote by $U_0(m)$ the space of all continuous maps 
$v: \R \times [0,1] \to \R^m$ with $v(t+2\pi,x)=v(t,x)$ for all  $t \in \R$ and $x \in [0,1]$, with the norm
$$
\|v\|:=\max_{1 \le j \le m} \max_{t,x} |v_j(t,x)|.
$$
Similarly we define the space $U_0(n-m)$. The spaces $U_0$ and $U_0(m) \times 
U_0(n-m)$ will be identified, i.e. elements $u \in U_0$ will be written as $u=(v,w)$ with $v \in U_0(m)$ and $w \in U_0(n-m)$.
Then the operators $\CC(\la)$ work as
\beq
\label{KL}
\CC(\la)u=(K(\la)w,L(\la)v) \mbox{ for } u=(v,w),
\ee
where the linear bounded operators $K(\la):U_0(n-m) \to U_0(m)$ and  $L(\la):U_0(m) \to U_0(n-m)$
are defined by the right hand side of \reff{Cdef}.

Let $f=(g,h) \in U_0$ with $g \in U_0(m)$ and $h \in U_0(n-m)$ be arbitrarily given.
We have $u=\CC(\la)u+f$ if and only if
$v=K(\la)w+g, \; w=L(\la)v+h,$
i.e., if and only if
\beq
\label{inserted}
v=K(\la)(L(\la)v+h)+g, \; w=L(\la)v+h.
\ee
This holds if and only if 
$$v=K(\la)w+g, \; w=L(\la)(K(\la)w+g)+h.$$

If there exist $\eps_1 \in (0,\eps_0]$ and $d >0$ such for all $ \la \in [-\eps_1,\eps_1]$ the operator $I-K(\la)L(\la)$ is invertible on $U_0(m)$ and
\beq
\label{Prod1}
\|(I-K(\la)L(\la))^{-1}\|_{\LL(U_0(m))}\le d \mbox{ for all } \la \in [-\eps_1,\eps_1],
\ee
then \reff{inserted} is uniquely solvable with respect to $(v,w)$ and $\|(v,w)\|_0 \le \mbox{ const } \|(f,g)\|_0$,
where the constant does not depend on $\la, f$ and $g$. The conclusion of the lemma is true. 
Similarly, if there exist $\eps_1 \in (0,\eps_0]$ and $d >0$ such for all $ \la \in [-\eps_1,\eps_1]$ the operator $I-L(\la)K(\la)$ is invertible on $U_0(n-m)$ and
\beq
\label{Prod1a}
\|(I-L(\la)K(\la))^{-1}\|_{\LL(U_0(n-m))}\le d \mbox{ for all } \la \in [-\eps_1,\eps_1],
\ee
then the conclusion of the lemma is true also.

Let us prove \reff{Prod1} and \reff{Prod1a} for $n\ge 3$.
Because of \reff{CCdef} the equation $v=K(\la)L(\la)v+\tilde g$ is equivalent to
\begin{eqnarray}
\label{veq}
&&\sum_{k=m+1}^nc_{jk}(t,x,\la)\sum_{l=1}^mc_{kl}(\tau_j(t,x,0,\la),0,\la)v_l(\tau_k(\tau_j(t,x,0,\la),0,1,\la),1)\nonumber\\
&&=v_j(t,x)-\tilde g_j(t,x), \; j=1,\ldots,m.
\end{eqnarray}
At $x=1$ it reads
\begin{eqnarray}
\label{veqx}
&&\sum_{k=m+1}^nc_{jk}(t,1,\la)\sum_{l=1}^mc_{kl}(\tau_j(t,1,0,\la),0,\la)v_l(\tau_k(\tau_j(t,1,0,\la),0,1,\la),1)\nonumber\\
&&=v_j(t,1)-\tilde g_j(t,1), \; j=1,\ldots,m.
\end{eqnarray}
Because of \reff{b0def}, \reff{cdef}, \reff{xj} and \reff{tildec} the coefficients in \reff{veqx} are
\begin{eqnarray*}
&&c_{jk}(t,1,\la)c_{kl}(\tau_j(t,1,0,\la),0,\la)\nonumber\\
&&=r_{jk}(\tau_j(t,1,0,\la),\la)r_{kl}(\tau_k(\tau_j(t,1,0,\la),0,1,\la),\la)\times\\
&& \times\exp\int_0^1\left(
\frac{b_k^0(\tau_k(\tau_j(t,1,0,\la),0,\eta,\la),\eta,\la)}{a_k(\eta,\la)}-\frac{b_j^0(\tau_j(t,1,\eta,\la),\eta,\la)}{a_j(\eta,\la)}\right)d\eta.
\end{eqnarray*}
Hence, for $\|v\| \le 1$ the absolute value of the right-hand side of \reff{veqx} can be estimated from above, for example, by
$$
R(\la):=\max_{1 \le j \le m} \sum_{k=m+1}^n\sum_{l=1}^m \max_{t,s}|r_{jk}(t,\la)r_{kl}(s,\la)|\exp\int_0^1 \max_{t,s}\left(\frac{b_k^0(t,x,\la)}{a_k(x,\la)}
-\frac{b_j^0(s,x,\la)}{a_j(x,\la)}\right)dx.
$$ 
Similarly, the equation $w=L(\la)K(\la)w+\tilde h$ is equivalent to
\begin{eqnarray*}
&&\sum_{k=1}^mc_{jk}(t,x,\la)\sum_{l=m+1}^nc_{kl}(\tau_j(t,x,1,\la),1,\la)w_l(\tau_k(\tau_j(t,x,1,\la),1,0,\la),0)\nonumber\\
&&=w_j(t,x)-\tilde h_j(t,x), \; j=m+1,\ldots,n.
\end{eqnarray*}
At $x=0$ it reads
\begin{eqnarray}
\label{veqxa}
&&\sum_{k=1}^mc_{jk}(t,0,\la)\sum_{l=m+1}^nc_{kl}(\tau_j(t,0,1,\la),1,\la)w_l(\tau_k(\tau_j(t,0,1,\la),1,0,\la),0)\nonumber\\
&&=w_j(t,0)-\tilde h_j(t,0), \; j=m+1,\ldots,n.
\end{eqnarray}
The coefficients in \reff{veqxa} are
\begin{eqnarray*}
&&c_{jk}(t,0,\la)c_{kl}(\tau_j(t,0,1,\la),1,\la)\nonumber\\
&&=r_{jk}(\tau_j(t,0,1,\la),\la)r_{kl}(\tau_k(\tau_j(t,0,1,\la),1,0,\la),\la)\times\\
&& \times\exp\int_0^1\left(\frac{b_j^0(\tau_j(t,0,\eta,\la),\eta,\la)}{a_j(\eta,\la)}-
\frac{b_k^0(\tau_k(\tau_j(t,0,1,\la),1,\eta,\la),\eta,\la)}{a_k(\eta,\la)}\right)d\eta.
\end{eqnarray*}
Hence, for $\|w\| \le 1$ the absolute value of the right-hand side of \reff{veqxa} can be estimated from above, for example, by
$$
S(\la):=\max_{m+1 \le j \le n} \sum_{k=1}^m\sum_{l=m+1}^n \max_{t,s}|r_{jk}(t,\la)r_{kl}(s,\la)|\exp\int_0^1 \max_{t,s}\left(\frac{b_j^0(t,x,\la)}{a_j(x,\la)}
-\frac{b_k^0(s,x,\la)}{a_k(x,\la)}\right)dx.
$$ 
Since $R(\la)$ and $S(\la)$ depend continuously on $\la$ and  $R(0)<1$ or  $S(0)<1$ (see  \reff{Fred}),
 there exist $\eps_1 \in (0,\eps_0]$ and positive  $d<1$ such for all $\la \in [-\eps_1,\eps_1]$ we have $R(\la) \le d$ or  $S(\la) \le d$.

Suppose   $R(\la) \le d$. Then the system \reff{veqx} has a unique solution $(v_1(t,1),\ldots,v_m(t,1))$, which is continuous and $2\pi$-periodic in $t$, and it holds
$$
\max_{1 \le j \le m} \max_{t}|v_j(t,1)| \le \mbox{ const } \|\tilde g\|_0,
$$
where the constant does not depend on $\la$ and $\tilde g$. Inserting this into \reff{veq}, we see that  for all $\la \in [-\eps_1,\eps_1]$ 
the system \reff{veq} has a unique solution $v$, and it holds $\|v\|_0 \le \mbox{const } \|\tilde g\|_0$,
where the constant does not depend on $\la$ and $\tilde g$.
Hence, \reff{Prod1} is true. Similary one shows that  from $S(\la) \le d$ it follows  \reff{Prod1a}.

Now, let us consider the case $m=1, n=2$. In this case the equation \reff{veqx} reads
\begin{eqnarray*}
&&c_{12}(t,1,\la)c_{21}\left(t-\int_0^1\frac{dx}{a_1(x,\la)},0,\la\right)v_1\left(t+\int_0^1\left(\frac{1}{a_2(x,\la)}-\frac{1}{a_1(x,\la)}\right)dx,1\right)\nonumber\\
&&=v_1(t,1)-\tilde g_1(t,1). 
\end{eqnarray*}
If
\beq\label{firstcond}
\left|c_{12}(t,1,0)c_{21}\left(t-\int_0^1\frac{dx}{a_1(x,0)},0,0\right)\right|\ne 1 \mbox{ for all } t,
\ee
then there exists $\eps_1>0$ such that for all  $\la \in [-\eps_1,\eps_1]$ the first equation has a unique solution $v_1(t,1)$, which is 
continuous and $2\pi$-periodic with respect to $t$, and it holds
$$
\max_{t}|v_j(t,1)| \le \mbox{const} \max_{t} |\tilde g_1(t,1)|,
$$
where the constant does not depend on $\la$ and $\tilde g_1$. Then, as above,  the conclusion of the lemma is true.
On the other hand, equation \reff{veqxa} is
\begin{eqnarray*}
&&c_{21}(t,0,\la)c_{12}\left(t+\int_0^1\frac{dx}{a_2(x,\la)},1,\la\right)w_2\left(t+\int_0^1\left(\frac{1}{a_2(x,\la)}-\frac{1}{a_1(x,\la)}\right)dx,0\right)\nonumber\\
&&=w_1(t,0)-h_2(t,0), 
\end{eqnarray*}
and, similarly to the above, one shows that  the conclusion of the lemma is true if
\beq
\label{seccond}
\left|c_{21}(t,0,0)c_{12}\left(t+\int_0^1\frac{dx}{a_2(x,0)},1,0\right)\right|\not=1 \mbox{ for all } t.
\ee
But \reff{charom}, \reff{cdef} and \reff{tildec} yield that the left-hand side of \reff{firstcond} is $R_0\left(t-\int_0^1\frac{dx}{a_1(x,0)}\right)$ and the  left-hand side of \reff{seccond} is $S_0\left(t+\int_0^1\frac{dx}{a_2(x,0)}\right)$.
Hence, assumption \reff{Fred1} implies one of the conditions  \reff{firstcond} and  \reff{seccond}.
\end{proof}

\begin{rem}\rm
\label{rema1}
Let us revisit the example of Remark \ref{rema}, i.e., consider   (\ref{eq:1.1})--(\ref{eq:1.2}) with $m=1, n=2$, $r_{12}(t,\la)=r_{21}(t,\la)=1$, $a_1(x,\la)=-a_2(x,\la)=1+\la^2$, $b_j(x,t,\la)=0$
and $u_0=0$.
Then   
$$
\left(\CC(\la)u\right)_j(t,x)=
\left\{
\begin{array}{l}
\displaystyle
u_2(0,t-(1+\la^2) x)
\mbox{ for }j=1,\\
\displaystyle
u_1(1,t+(1+\la^2)(x-1))
\mbox{ for }
j=2.
\end{array}
\right.
$$
Hence $I-\CC(\la)$ is bijective on $U_0$ if and only if  $(1+\la^2)/2\pi$ is irrational. This shows again that the map $\la \in \R \mapsto \CC(\la) \in {\cal L}(U_0)$ is not continuous.
\end{rem}

In what follows we will work with the Banach space $V$ and the Hilbert space $H$ defined as follows:
\begin{eqnarray*}
&&V:=\{u \in U_0: \; \d_tu,\d_xu \in U_0\},\; \|u\|_V:= \|u\|_0+ \|\d_tu\|_0+ \|\d_xu\|_0,\\
&&H:=L^2\left((0,1) \times (0,2\pi);\R^n\right), \; \langle u,v \rangle := \int_0^{2\pi}\int_0^1u(t,x)v(x,t)dxdt.
\end{eqnarray*}
The space $V$ is densely and compactly embedded into $U_0$,  while $U_0$ is continuously embedded into $H$.

\begin{lem}
\label{B3anw}
It holds
$$
\lim_{\la \to 0} \sup_{\|u\|_0\le 1}\langle \left(\CC(\la)+\D(\la)-\CC(0)-\D(0)\right)u,h\rangle=0 \mbox{ for all } h \in H.
$$
\end{lem}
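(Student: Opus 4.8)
Here is the plan. The point is that although $\CC(\la)$ and $\D(\la)$ are discontinuous in $\la$ with respect to the operator norm of $\LL(U_0)$ (cf.\ Remark \ref{rema1}), pairing with a fixed $h\in H$ and integrating turns ``composition with a $\la$-dependent characteristic'' into a translation in $L^1$, and translations are continuous in $L^1$.

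First I would fix $h=(h_1,\dots,h_n)\in H$ and rewrite the two pairings by integration along characteristics. Using \reff{CCdef}, \reff{tildec}, \reff{dDdef}, \reff{tilded} and, for frozen $x$ (resp.\ frozen $x,\xi$), the substitution $\sigma=\tau_j(t,x,x_j,\la)$ (resp.\ $\sigma=\tau_j(t,x,\xi,\la)$), which by \reff{charom} is a mere translation $\sigma=t+\varphi$ of the $2\pi$-periodic variable $t$, Fubini gives
\[
\langle\CC(\la)u,h\rangle=\sum_{j,k}\int_0^{2\pi}u_k(\sigma,x_j)\,g_{jk}^\la(\sigma)\,d\sigma,\qquad
\langle\D(\la)u,h\rangle=\sum_{j}\sum_{k\neq j}\int_0^{2\pi}\!\!\int_0^1 u_k(\sigma,\xi)\,G_{jk}^\la(\sigma,\xi)\,d\xi\,d\sigma,
\]
where, with $\varphi_j(x,\la):=\int_x^{x_j}a_j(\eta,\la)^{-1}d\eta$ and $\varphi_j(x,\xi,\la):=\int_x^{\xi}a_j(\eta,\la)^{-1}d\eta$,
\[
g_{jk}^\la(\sigma):=\int_0^1 c_{jk}\bigl(\sigma-\varphi_j(x,\la),x,\la\bigr)h_j\bigl(\sigma-\varphi_j(x,\la),x\bigr)\,dx,
\]
and $G_{jk}^\la(\sigma,\xi)$ is defined analogously by integrating $d_{jk}(\sigma-\varphi_j(x,\xi,\la),x,\xi,\la)h_j(\sigma-\varphi_j(x,\xi,\la),x)$ over $x\in[\xi,1]$ if $j\le m$ and over $x\in[0,\xi]$ if $j>m$. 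By the Cauchy--Schwarz inequality, the boundedness of $c_{jk}$ and $d_{jk}$, and $2\pi$-periodicity in $t$, one checks $g_{jk}^\la\in L^1(0,2\pi)$ and $G_{jk}^\la\in L^1((0,1)\times(0,2\pi))$ with norms uniformly bounded in $\la$. Since $|u_k|\le\|u\|_0$,
\[
\sup_{\|u\|_0\le1}\bigl\langle(\CC(\la)+\D(\la)-\CC(0)-\D(0))u,h\bigr\rangle\le\sum_{j,k}\|g_{jk}^\la-g_{jk}^0\|_{L^1}+\sum_{j}\sum_{k\neq j}\|G_{jk}^\la-G_{jk}^0\|_{L^1},
\]
so it suffices to prove that each $L^1$-difference tends to $0$ as $\la\to0$.

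Next I would prove $\|g_{jk}^\la-g_{jk}^0\|_{L^1}\to0$ (the argument for $G_{jk}^\la$ is identical, the extra parameter $\xi$ being harmless because all bounds below are uniform in $\xi$). Split the integrand of $g_{jk}^\la-g_{jk}^0$ as the sum of $c_{jk}(\sigma-\varphi_j(x,\la),x,\la)$ times $[h_j(\sigma-\varphi_j(x,\la),x)-h_j(\sigma-\varphi_j(x,0),x)]$ and of $[c_{jk}(\sigma-\varphi_j(x,\la),x,\la)-c_{jk}(\sigma-\varphi_j(x,0),x,0)]$ times $h_j(\sigma-\varphi_j(x,0),x)$. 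By \reff{hyp0}, $a_j$ is bounded away from $0$ on the compact set $[0,1]\times[-\eps_0,\eps_0]$, so $\rho(\la):=\sup_x|\varphi_j(x,\la)-\varphi_j(x,0)|\to0$; and $c_{jk}$ (see \reff{tildec}, \reff{cdef}) is continuous and $2\pi$-periodic in its first argument, hence uniformly continuous and bounded. Therefore the second bracket tends to $0$ uniformly in $(\sigma,x)$ and contributes a vanishing multiple of $\|h_j\|_{L^1}\le\sqrt{2\pi}\,\|h_j\|_{L^2}$. For the first bracket, fix $\varepsilon>0$ and pick $\tilde h$ continuous and $2\pi$-periodic in $t$ with $\|h_j-\tilde h\|_{L^1((0,1)\times(0,2\pi))}<\varepsilon$; since translating the $\sigma$-variable over a full period preserves the integral, replacing $h_j$ by $\tilde h$ in both occurrences costs at most $2\varepsilon$ in $L^1$, while the $\tilde h$-difference is bounded by $2\pi\,\omega_{\tilde h}(\rho(\la))\to0$, $\omega_{\tilde h}$ a modulus of continuity of $\tilde h$. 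Hence the first bracket contributes $<3\varepsilon$ for small $\la$; letting $\varepsilon\to0$ gives the claim, and summing over $j,k$ finishes the proof.

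The only real obstacle is this last step: the translation amount $\varphi_j(x,\la)-\varphi_j(x,0)$ applied to $h_j$ depends on the space variable $x$, so one cannot quote $L^1$-continuity of translation directly; interposing a continuous approximation of $h_j$ (equivalently, dominated convergence in $x$ for $x\mapsto\|h_j(\cdot-(\varphi_j(x,\la)-\varphi_j(x,0)),x)-h_j(\cdot,x)\|_{L^1(0,2\pi)}$, the integrand being dominated by $2\|h_j(\cdot,x)\|_{L^1(0,2\pi)}$) is what makes it work. The remaining ingredients --- the change of variables, Fubini, and the uniform-continuity estimates for $c_{jk}$ and $d_{jk}$ --- are routine.
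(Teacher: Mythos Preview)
Your proof is correct and takes essentially the same approach as the paper: both shift the $\la$-dependence from $u$ onto $c_{jk}h_j$ (resp.\ $d_{jk}h_j$) by the translation $\sigma=\tau_j(t,x,\cdot,\la)$ and then invoke continuity of translation in $L^p$. The paper keeps the double integral in $(s,x)$ and appeals directly to ``continuity in the mean of $L^2$-functions,'' whereas you first integrate out $x$ to form $g_{jk}^\la$, $G_{jk}^\la$ and make the density-of-continuous-functions step explicit; these are presentational, not substantive, differences.
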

\begin{proof}
Take $\la \in [-\eps_0,\eps_0]$, $u \in U$ and $h \in H$. Accordingly to \reff{CCdef},
 the scalar product $\left\langle \left(\CC(\la)-\CC(0)\right)u,h\right\rangle$ equals 
$$
\sum_{j,k=1}^n\int_0^1\int_0^{2 \pi}\left(c_{jk}(t,x,\la)u_k(\tau_j(t,x,x_j,\la),x_j)-c_{jk}(t,x,0)u_k(\tau_j(t,x,x_j,0),x_j)\right)h_j(x,t) dt dx.
$$
Denote by $t_j(\cdot,x,\xi,\la)$ the inverse function of $\tau_j(\cdot,x,\xi,\la)$ (cf. \reff{charom}), i.e.,
$$
t_j(\tau,x,\xi,\la):=\tau-\int_x^\xi\frac{d\eta}{a_j(\eta,\la)}.
$$
Then the change of integration variables $t \mapsto s$, which is defined by
$$
t=\om_j(s,x,\la):=t_j(\tau_j(s,x,x_j,0),x,x_j,\la)=s+\int_x^{x_j}\left(\frac{1}{a_j(\eta,\la)}-\frac{1}{a_j(\eta,0)}\right)d\eta,
$$
yields
$$
\begin{array}{ll}
\displaystyle
\int_0^{2\pi}\left(c_{jk}(t,x,\la)u_k(\tau_j(t,x,x_j,\la),x_j)-c_{jk}(t,x,0)u_k(\tau_j(t,x,x_j,0),x_j)\right)h_j(t,x) dt\nonumber \\
\displaystyle =\int_0^{2 \pi}\left(c_{jk}(\om_j(s,x,\la),x,\la)h_j(\om_j(s,x,\la),x)-c_{jk}(s,x,0)h_j(s,x))\right)u_k(\tau_j(s,x,x_j,0),x_j)ds.\nonumber
\end{array}
$$
Hence, 
\begin{eqnarray*}
&&\sup_{\|u\|_0\le 1}\left|\left\langle \left(\CC(\la)-\CC(0)\right)u,h\right\rangle\right|\\
&&\le\sum_{j,k=1}^n\int_0^1\int_0^{2 \pi}\left|\left(c_{jk}(\om_j(s,x,\la),x,\la)h_j(\om_j(x,s,\la),x)-c_{jk}(s,x,0)h_j(s,x))\right)\right| ds dx \to 0
\end{eqnarray*}
for $\la \to 0$.
Here we used $\om_j(s,x,0)=s$ and the continuity in the mean of $L^2$-functions.

Similarly, accordingly to \reff{dDdef}, we have 
$$
\left\langle \D(\la)u,h\right\rangle=\sum_{j=1}^n \sum_{k\not=j}^n \int_0^1\int_{x_j}^x\int_0^{2 \pi} 
d_{jk}(t,x,\xi,\la)u_k(\tau_j(t,x,\xi,\la),\xi)h_j(x,t) dt d\xi dx.
$$
Hence, using the change of the integration variable
$
t=t_j(\tau_j(s,x,\xi,0),x,\xi,\la),
$
we get 
$$
\lim_{\la \to 0}\sup_{\|u\|_0\le 1}\left\langle \left(\D(\la)-\D(0)\right)u,h\right\rangle=0.
$$
\end{proof}

\begin{lem}
\label{B4anw}
There exists $d>0$ such that for all $\la \in [-\eps_0,\eps_0]$ and $v \in V$ it holds
$\D(\la)^2v \in V$, $\D(\la)\CC(\la)v \in V$ and  
$$
\|\D(\la)^2v\|_V + \|\D(\la)\CC(\la)v\|_V \le d\|v\|_{0}.
$$
\end{lem}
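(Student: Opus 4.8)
The plan is to write both $\D(\la)^2v$ and $\D(\la)\CC(\la)v$ as explicit (iterated) integrals of the components of $v$ against $C^\infty$-smooth kernels, and then to show that one may differentiate each of them once in $t$ and once in $x$ \emph{without ever differentiating $v$}. The mechanism is integration by parts along the characteristics, and it works precisely because of the separation of characteristic speeds \reff{hyp}.

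First I would insert \reff{dDdef} and \reff{CCdef} into $\D(\la)(\D(\la)v)$ and $\D(\la)(\CC(\la)v)$ and use \reff{charom} together with the elementary identity $\tau_k(\tau_j(t,x,\xi,\la),\xi,\zeta,\la)=\tau_j(t,x,\xi,\la)+\int_\xi^{\zeta}d\eta/a_k(\eta,\la)$. This yields representations
\[
[\D(\la)^2v]_j(t,x)=\sum_{k\ne j}\sum_{l\ne k}\int_{x_j}^x\int_{x_k}^{\xi} K_{jkl}(t,x,\xi,\xi',\la)\,v_l\bigl(\Theta_{jk}(t,x,\xi,\xi',\la),\xi'\bigr)\,d\xi'\,d\xi ,
\]
\[
[\D(\la)\CC(\la)v]_j(t,x)=\sum_{k\ne j}\sum_{l}\int_{x_j}^x L_{jkl}(t,x,\xi,\la)\,v_l\bigl(\theta_{jk}(t,x,\xi,\la),x_k\bigr)\,d\xi ,
\]
where $\Theta_{jk}(t,x,\xi,\xi',\la):=\tau_j(t,x,\xi,\la)+\int_\xi^{\xi'}d\eta/a_k(\eta,\la)$, $\theta_{jk}(t,x,\xi,\la):=\tau_j(t,x,\xi,\la)+\int_\xi^{x_k}d\eta/a_k(\eta,\la)$, and the kernels $K_{jkl},L_{jkl}$ are built from $d_{jk}$, $c_{jk}$, $c_j$ and $1/a_j$; hence they and all their partial derivatives are continuous, $2\pi$-periodic in $t$, and bounded uniformly for $x\in[0,1]$, $\la\in[-\eps_0,\eps_0]$. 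The essential feature of this rewriting is that the outer sum runs over $k\ne j$ only, because $\D(\la)$ carries no diagonal term (cf.\ \reff{dDdef}, which in turn rests on $\partial_{u_j}f_j|_{u=v}=0$); this is what will make \reff{hyp} applicable below.

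Since $v\in V$ is $C^1$ and the integration domains depend in a $C^1$ way on $(t,x)$, the derivatives $\d_t$ and $\d_x$ of both expressions exist and are computed by the Leibniz rule, producing two kinds of terms. In terms of the first kind the $t$- or $x$-derivative falls on a kernel or on an integration limit; each such term is again an iterated or a one-dimensional integral of $v_l$ itself (differentiating the outer limit at $\xi=x$ even yields a pure one-dimensional integral of $v_l$), hence is bounded by $\const\,\|v\|_0$ with constant uniform in $\la$. In terms of the second, dangerous, kind the derivative falls on $v_l$ through its first slot, producing a factor $(\d_t v_l)(\Theta_{jk},\xi')$, resp.\ $(\d_t v_l)(\theta_{jk},x_k)$, times $\d_t\Theta_{jk}=1$ or $\d_x\Theta_{jk}=-1/a_j(x,\la)$ (and similarly for $\theta_{jk}$). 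For these I would use
\[
\d_\xi\Theta_{jk}(t,x,\xi,\xi',\la)=\frac{1}{a_j(\xi,\la)}-\frac{1}{a_k(\xi,\la)}=\d_\xi\theta_{jk}(t,x,\xi,\la),
\]
whose right-hand side is, by \reff{hyp0} and \reff{hyp}, bounded and bounded away from zero on the compact parameter set — the only place where \reff{hyp} is used. Hence $(\d_t v_l)(\Theta_{jk},\xi')=\d_\xi[v_l(\Theta_{jk},\xi')]\big/\bigl(1/a_j(\xi,\la)-1/a_k(\xi,\la)\bigr)$, and likewise for $\theta_{jk}$; after a Fubini interchange (for the first representation) the dangerous terms are integrated by parts in $\xi$. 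The boundary contributions are values of $v_l$ along characteristics — at $\xi=x$ one gets $v_l(\tau_k(t,x,\xi',\la),\xi')$, resp.\ $v_l(\tau_k(t,x,x_k,\la),x_k)$, and at the other endpoint $v_l(\tau_j(t,x,\xi',\la),\xi')$, resp.\ a value of $v_l$ at $x_k$ — while the remaining interior term is once more an integral of $v_l$ against a bounded kernel. All of these are $\le\const\,\|v\|_0$ uniformly in $\la$, which gives $\D(\la)^2v,\ \D(\la)\CC(\la)v\in V$ together with the asserted estimate.

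The bulk of the work is bookkeeping: the product rule and the moving integration limits produce several groups of terms, and one has to keep track of the blocks $j\le m$ versus $j>m$ (and likewise for $k$), which fix $x_j,x_k\in\{0,1\}$ and the orientation in the Fubini step. In every case, however, the only problematic contribution is of the single type above, and the non-resonance identity $\d_\xi\Theta_{jk}=1/a_j-1/a_k\ne0$ lets one trade the $t$-derivative of $v_l$ for a $\xi$-derivative and integrate it away. The single delicate point — and the conceptual heart of the lemma — is that this identity degenerates for $k=j$, which is exactly why it is crucial that $\D(\la)$ has no diagonal part.
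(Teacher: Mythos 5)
Your argument is correct and is essentially the paper's own proof: both isolate the Leibniz terms in $\d_t$ and $\d_x$ of $\D(\la)^2v$ and $\D(\la)\CC(\la)v$ where the derivative falls on $v$ (only $\d_tv$ can appear, since the second slot of $v$ is never hit), use that along the composed characteristics $\frac{d}{d\xi}v_l(\tau_k(\tau_j(t,x,\xi,\la),\xi,\eta,\la),\eta)=\bigl(\tfrac{1}{a_j(\xi,\la)}-\tfrac{1}{a_k(\xi,\la)}\bigr)\d_tv_l(\cdot)$ with $k\ne j$ --- nonvanishing by \reff{hyp0}, \reff{hyp} and available precisely because $\D(\la)$ has no diagonal part --- and then integrate by parts in $\xi$ after a Fubini swap, all remaining terms being bounded by a constant times $\|v\|_0$ uniformly in $\la$. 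The only (harmless) overstatement is that the kernels have \emph{all} partial derivatives bounded: at this stage $u_0$ is only known to be $C^1$, so the coefficients $c_{jk}$, $d_{jk}$, $d_{jkl}$ are only $C^1$, but that is exactly the regularity your single differentiation and integration by parts require.
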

\begin{proof}
We have to show that for all $\la \in [-\eps_0,\eps_0]$ and $v \in V$  it holds $\d_t(\D(\la)\CC(\la)v) \in U_0$,
$\d_x(\D(\la)\CC(\la)v) \in U_0$,
$\d_t(\D(\la)^2v) \in U_0$, 
$\d_x(\D(\la)^2v) \in U_0$
and
$$
\|\d_t(\D(\la)\CC(\la)v)\|_0 +\|\d_x(\D(\la)\CC(\la)v)\|_0
+\|\d_t(\D(\la)^2v)\|_0 +\|\d_x(\D(\la)^2v)\|_0
\le \mbox{const}\|v\|_0.
$$
From \reff{CCdef} and \reff{dDdef} it follows that for all $v \in V$ we have
\begin{eqnarray*}
\d_t(\CC(\la)v)&=&C^t_0(\la)v+\CC(\la)\d_tv,\\
\d_x(\CC(\la)v)&=&C^x_0(\la)v+C^x_1(\la)\d_tv, \\ 
\d_t(\D(\la)v)&=&D^t_0(\la)v+\D(\la)\d_tv,\\
\d_x(\D(\la)v)&=&D^x_0(\la)v+D^x_1(\la)\d_tv
\end{eqnarray*}
with the operators $C^t_0(\la), C^t_1(\la), C^x_0(\la),  C^x_1(\la), D^t_0(\la), D^t_1(\la), D^x_0(\la),  D^x_1(\la) \in \LL(U_0)$ defined by
\begin{eqnarray*}
\left[C^t_0(\la)v\right]_j(t,x)&:=&\sum_{k=1}^n\d_tc_{jk}(t,x,\la)
v_k(\tau_j(t,x,x_j,\la),x_j),\\
\left[C^x_0(\la)v\right]_j(t,x)&:=&\sum_{k=1}^n\d_xc_{jk}(t,x,\la)
v_k(\tau_j(t,x,x_j,\la),x_j),\\
\left[C^x_1(\la)w\right]_j(t,x)&:=&-\sum_{k=1}^n\frac{c_{jk}(t,x,\la)}{a_j(x,\la)}w_k(\tau_j(t,x,x_j,\la),x_j),\\
\left[D^t_0(\la)v\right]_j(t,x)&:=&\sum_{k\not=j}\int_{x_j}^x\d_td_{jk}(t,x,\xi,\la)v_k(\tau_j(t,x,\xi,\la),x_j) d\xi,\\
\left[D^x_0(\la)v\right]_j(t,x)&:=&\sum_{k\not=j}\left[d_{jk}(t,x,x,\la)
u_k(t,x)+\int_{x_j}^x
\d_xd_{jk}(t,x,\xi,\la)v_k(\tau_j(t,x,\xi,\la),\xi) d\xi\right],\\
\left[D^x_1(\la)w\right]_j(t,x)&:=&-\sum_{k\not=j}\int_{x_j}^x
\frac{d_{jk}(t,x,\xi,\la)}{a_j(x,\la)}w_k(\tau_j(t,x,\xi,\la),\xi) d\xi.
\end{eqnarray*}
Therefore,
\begin{eqnarray*}
\d_t(\D(\la)\CC(\la)v)&=&D_0^t(\la)\CC(\la)v+\D(\la)\left(C^t_0(\la)v+\CC(\la)\d_tv\right),\\
\d_t(\D(\la)^2v)&=&D_0^t(\la)\D(\la)v+\D(\la)\left(D^t_0(\la)v+\D(\la)\d_tv\right),\\
\d_x(\D(\la)\CC(\la)v)&=&D_0^x(\la)\CC(\la)v+D_1^x(\la)\left(C^t_0(\la)v+\CC(\la)\d_tv\right),\\
\d_x(\D(\la)^2v)&=&D_0^x(\la)\D(\la)v+D_1^x(\la)\left(D^t_0(\la)v+\D(\la)\d_tv\right).
\end{eqnarray*}
We have to show that for all $\la \in [-\eps_0,\eps_0]$ and $v \in V$ it holds  $\D(\la)\CC(\la)\d_tv \in U_0$, $\D(\la)^2\d_tv \in U_0$,
$D_1^x(\la)\CC(\la)\d_tv \in U_0$, $D_1^x(\la)\D(\la)\d_tv \in U_0$ and
$$
\|\D(\la)\CC(\la)\d_tv\|_0+\|\D(\la)^2\d_tv\|_0+\|D_1^x(\la)\CC(\la)\d_tv\|_0+\|D_1^x(\la)\D(\la)\d_tv\|_0
\le \mbox{const}\|v\|_0.
$$

Let us start with  the term $\D(\la)^2 \d_tv$. We have (cf. \reff{dDdef})
$$
\left[\D(\la)^2 \d_tv\right]_j(t,x)=
\sum_{l\not=k\not=j}\int_{x_j}^x\int_{x_k}^\xi d_{jkl}(t,x,\xi,\eta,\la)\d_tv_l(\tau_k(\tau_j(t,x,\xi,\lambda),\xi,\eta,\la),\eta)\,d\eta\, d\xi
$$
with
$
d_{jkl}(t,x,\xi,\eta\la):=d_{jk}(t,x,\xi,\la)d_{kl}(\tau_j(t,x,\xi,\lambda),\xi,\eta).
$
Moreover, taking into account  \reff{hyp0}, \reff{hyp} and \reff{charom},
the following identity is true:
$$
\frac{d}{d\xi}v_l(\tau_k(\tau_j(t,x,\xi,\lambda),\xi,\eta,\lambda),\eta)
=\left(\frac{1}{a_j(\xi,\la)}-\frac{1}{a_k(\xi,\la)}\right)
\d_tv_l(\tau_k(\tau_j(t,x,\xi,\lambda),\xi,\eta,\lambda),\eta).
$$
It follows that
\begin{eqnarray*}
&&\int_{x_j}^x\int_{x_k}^\xi d_{jkl}(t,x,\xi,\eta,\la)\d_tv_l(\tau_k(\tau_j(t,x,\xi,\lambda),\xi,\eta,\la),\eta)d\eta d\xi\\
&&=
\int_{x_k}^x\int_\eta^{x_j}
\frac{a_j(\xi,\la)a_k(\xi,\la)}{a_k(\xi,\la)-a_j(\xi,\la)}
d_{jkl}(t,x,\xi,\eta,\la)\frac{d}{d\xi}v_l(\tau_k(\tau_j(t,x,\xi,\lambda),\xi,\eta,\lambda),\eta)d\xi d\eta.
\end{eqnarray*}
Integrating by parts in $\xi$, we see that the absolute values of these
integrals   
can be estimated by a constant times  $\|v\|_0$, where the constant does not depend on
$x$, $t$ and $\la$.

Now, let us consider the term $\D(\la)\CC(\la) \d_tv$. We have (cf. \reff{CCdef} and  \reff{dDdef})
\begin{eqnarray*}
&&\left[\D(\la)\CC(\la) \d_tv\right]_j(t,x)\\
&&=\sum_{k=1\atop k\not=j}^m\sum_{l=m+1}^n\int_{x_j}^xd_{jk}(t,x,\xi,\la))
{c}_{kl}(\tau_k(t,x,\xi,\la),\xi,\la)\d_tv_l(\tau_j(\tau_k(t,x,\xi,\la),\xi,0,\la),0)d\xi\\
&&+\sum_{k=m+1\atop k\not=j}^n\sum_{l=1}^m\int_{x_j}^xd_{jk}(t,x,\xi,\la))
{c}_{kl}(\tau_k(t,x,\xi,\la),\la)\d_tv_l(\tau_j(\tau_k(t,x,\xi,\la),1,\xi,\lambda),1)d\xi.
\end{eqnarray*}
As above, we see that the absolute values of these integrals 
can be estimated by a constant times  $\|v\|_0$, where the constant does not depend on
$x$, $t$ and $\la$.

Similarly one shows that  for all $\la\in[-\eps_0,\eps_0]$ and  $v \in V$ we have $D_1^x(\la)\CC(\la)\d_tv \in U_0$, $D_1^x(\la)\D(\la)\d_tv \in U_0$ and
$
\|D_1^x(\la)\CC(\la)\d_tv\|_0+\|D_1^x(\la)\D(\la)\d_tv\|_0
\le \mbox{const}\|v\|_0.
$
\end{proof}

\begin{cor} 
\label{corro1}
For all $\la \in [-\eps_1,\eps_1]$ the following is true:

(i) The operator $I-\CC(\la)-\D(\la)$ is Fredholm of index zero from $U_0$ to $U_0$.

(ii) $\ker (I-\CC(\la)-\D(\la)) \subset V$.
\end{cor}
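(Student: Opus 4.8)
The plan is to factor out the isomorphism $I-\CC(\la)$ and reduce both claims to properties of the operator $N:=(I-\CC(\la))^{-1}\D(\la)\in\LL(U_0)$, exploiting that $\D(\la)$ gains regularity once it is squared or composed with $\CC(\la)$ (Lemma \ref{B4anw}). For part (i), recall that $I-\CC(\la)$ is an isomorphism of $U_0$ by Lemma \ref{prop1}, and write $I-\CC(\la)-\D(\la)=(I-\CC(\la))(I-N)$; so it suffices to show that $I-N$ is Fredholm of index zero. From the resolvent-type identity $(I-\CC(\la))^{-1}=I+\CC(\la)(I-\CC(\la))^{-1}$ one obtains
\[
N^2=(I-\CC(\la))^{-1}\Bigl[\D(\la)^2+\D(\la)\CC(\la)(I-\CC(\la))^{-1}\D(\la)\Bigr].
\]
By Lemma \ref{B4anw}, together with a density argument (the bound there is in $\|\cdot\|_0$ and $V$ is dense in $U_0$), both $\D(\la)^2$ and $\D(\la)\CC(\la)$ map $U_0$ boundedly into $V$; hence the bracket maps $U_0$ boundedly into $V$, and since $V$ is compactly embedded into $U_0$, the operator $N^2\in\LL(U_0)$ is compact. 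Consequently $t^2N^2$ is compact for every $t\in[0,1]$, so by the Fredholm alternative $I-t^2N^2=(I-tN)(I+tN)=(I+tN)(I-tN)$ is Fredholm of index zero, whence $I-tN$ is Fredholm for every $t\in[0,1]$. Since $t\mapsto I-tN$ is norm-continuous and the Fredholm index is locally constant, $\ind(I-N)=\ind(I)=0$, which proves (i).

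For part (ii), let $u\in\ker(I-\CC(\la)-\D(\la))$. Then $u=Nu$, hence $u=N^2u$, and by the displayed formula together with Lemma \ref{B4anw} we get $N^2u\in(I-\CC(\la))^{-1}V$, i.e.\ $(I-\CC(\la))u\in V$. It thus remains to check that $(I-\CC(\la))^{-1}$ maps $V$ into $V$, which I would establish by rerunning the proof of Lemma \ref{prop1} with data in $V$ instead of in $U_0$: splitting $u=(v,w)$ and reducing, as there, to $(I-K(\la)L(\la))v=\tilde v$ (or the companion equation for $w$) with $\tilde v\in V$, restriction to $x=1$ gives a functional equation for the trace $(v_1(\cdot,1),\dots,v_m(\cdot,1))$ whose data is now $C^1$ in $t$; since the relevant operator (resp.\ its inverse) is a contraction in the sup-norm by \reff{Fred}, resp.\ by \reff{Fred1} when $m=1,n=2$, and the same estimate controls the $t$-derivative, the trace is $C^1$, so the representation formula analogous to \reff{veq} yields $v\in V$, and likewise $w\in V$, i.e.\ $u\in V$.

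The genuinely delicate point is this last step — that $(I-\CC(\la))^{-1}$ preserves $V$, i.e.\ that the boundary functional equations can be solved in $C^1$. It works because the contraction constant in \reff{Fred} and \reff{Fred1} is a bound on the coefficients $c_{jk}$ which is untouched by differentiation in $t$, and because in the case $m=1,n=2$ the smooth $t$-dependent coefficient of the scalar functional equation, being uniformly off $1$ in modulus by \reff{Fred1}, is by continuity either uniformly below or uniformly above $1$, so that forward (resp.\ backward) iteration converges in $C^1$. Everything else is routine bookkeeping with Nemytskii-type operators and the compact embedding $V\hookrightarrow U_0$.
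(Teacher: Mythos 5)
Your argument is correct, and its skeleton is the paper's: the paper deduces both assertions from the abstract Lemma \ref{B2} (with $U:=U_0$), whose hypotheses are supplied by Lemma \ref{prop1} (uniform invertibility of $I-\CC(\la)$) and by Lemma \ref{B4anw} via Remark \ref{B3}. For (i) you reproduce exactly the mechanism of Lemma \ref{B2}(i): $N^2=\left((I-\CC(\la))^{-1}\D(\la)\right)^2$ is compact on $U_0$ because $\D(\la)^2$ and $\D(\la)\CC(\la)$ gain regularity and $V$ is compactly embedded in $U_0$; the only difference is that the paper quotes \cite[Theorem XIII.5.2]{KA} for ``compact square implies $I-N$ Fredholm of index zero'', while you rederive this with the homotopy $t\mapsto I-tN$, which is a fine elementary substitute. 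The genuine divergence is in (ii): the paper concludes $u=N^2u\in V$ directly from hypothesis \reff{absch} of Lemma \ref{B2}, obtained from Lemma \ref{B4anw} through Remark \ref{B3}; note, however, that in \reff{formel} the outer factor $(I-\CC(\la))^{-1}$ is applied to a $V$-valued element and the result is treated as again lying in $V$, so that chain tacitly uses exactly the property you isolate, namely that $(I-\CC(\la))^{-1}$ maps $V$ into $V$. Your route (first $(I-\CC(\la))u\in V$, then $V$-invariance of $(I-\CC(\la))^{-1}$) makes this step explicit, which is a real gain in transparency; its price is that the $C^1$-solvability of the boundary functional equations is only sketched. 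The sketch is sound: the $a_j$ are $t$-independent, so the shifts are $t\mapsto t+\mathrm{const}$, the coefficients $c_{jk}$ are $C^1$ because $u_0$ is a classical solution, the differentiated trace equations involve the same shift operator (same contraction constant, resp.\ the same ``uniformly off $1$'' dichotomy when $m=1$, $n=2$) plus bounded zero-order terms, and $\d_xu$ then comes from the representation \reff{veq}. When writing it up, justify that the solution of the differentiated trace equation is indeed the derivative of the trace (e.g.\ by uniform convergence of the differentiated iteration); for the corollary set-theoretic invariance suffices, while the bound $\|(I-\CC(\la))^{-1}f\|_V\le \const\,\|f\|_V$ would in addition recover \reff{absch} itself.
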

\begin{proof}
The assertions (i) and (ii) follow from the assertions (i) and (ii) of Lemma~\ref{B2}, respectively.
\end{proof}

\begin{lem}
\label{B5anw}
The operator $I-\CC(0)-\D(0)$ is injective.
\end{lem}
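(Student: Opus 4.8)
The plan is to show that every element of $\ker(I-\CC(0)-\D(0))$ is a solution of the linearized problem \reff{evp}; since the latter is assumed to be trivial, injectivity follows at once. The bridge between the operator identity $u=\CC(0)u+\D(0)u$ and the differential problem \reff{evp} is the integration along characteristics that was already carried out (in the nonlinear version) just before \reff{rep1}.

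First I would take an arbitrary $u\in\ker(I-\CC(0)-\D(0))$ and apply Corollary \ref{corro1}(ii) to conclude $u\in V$. This is the essential regularity step: it tells us that $u$, $\d_tu$ and $\d_xu$ all belong to $U_0$, so $u$ is continuously differentiable and $2\pi$-periodic in $t$, which is exactly what is needed to legitimately reverse the characteristic computation below. Next I would write out the identity $u=\CC(0)u+\D(0)u$ componentwise by means of \reff{CCdef}, \reff{dDdef} together with the definitions \reff{tildec}, \reff{tilded} of the coefficients, recalling that $\D(0)=\d_uD(0,u,u_0)|_{u=u_0}$ carries only the off-diagonal terms $k\ne j$ (the diagonal part of $\d_uf(t,x,0,u_0,u_0)$ vanishing, cf.\ \reff{fdef}, \reff{fdef1} and the remark following them), while the diagonal contribution sits inside $\CC(0)$ through the exponential factor $c_j(t,x,x_j,0,u_0)$. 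One then sees that $u$ satisfies the linearization at $u_0$, with $\la=0$ and $v=u_0$, of the characteristic system \reff{rep1}--\reff{rep2}: for $j=1,\dots,m$,
\begin{eqnarray*}
u_j(t,x)&=&c_j(t,x,0,0,u_0)\sum_{k=m+1}^nr_{jk}(\tau_j(t,x,0,0),0)u_k(\tau_j(t,x,0,0),0)\\
&&-\sum_{k\ne j}\int_0^x\frac{c_j(t,x,\xi,0,u_0)}{a_j(\xi,0)}\d_{u_k}b_j(\tau_j(t,x,\xi,0),\xi,0,u_0(\tau_j(t,x,\xi,0),\xi))u_k(\tau_j(t,x,\xi,0),\xi)\,d\xi,
\end{eqnarray*}
and analogously for $j=m+1,\dots,n$ with $\int_x^1$ in place of $\int_0^x$ and $\sum_{k=1}^m$ in the boundary term.

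Then I would reverse the integration-along-characteristics argument: since $u\in V$, each map $\xi\mapsto u_k(\tau_j(t,x,\xi,0),\xi)$ is $C^1$, so the right-hand side of the above identity may be differentiated in $x$ and in $t$; combining the two derivatives and using $\d_\xi\tau_j=1/a_j$ recovers pointwise the differential equation of \reff{evp}, while the time-periodicity is automatic from $u\in U_0$. Finally, evaluating the identity at $x=x_j$ — where the integral term vanishes, $\tau_j(t,x_j,x_j,0)=t$ and $c_j(t,x_j,x_j,0,u_0)=1$ — produces exactly the reflection boundary conditions of \reff{evp}. Hence $u$ solves \reff{evp}, so $u=0$ by hypothesis, and $I-\CC(0)-\D(0)$ is injective.

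There is no deep obstacle here; the only point requiring care is the bookkeeping. One must keep the two index ranges $j\le m$ and $j>m$ separated, verify that $\D(0)$ produces no $\d_{u_j}b_j$ term where it should not (which is precisely the vanishing of the diagonal of $\d_uf(\cdot,\cdot,0,u_0,u_0)$), and make sure that the regularity $u\in V$ furnished by Corollary \ref{corro1}(ii) is exactly enough to differentiate the characteristic representation back into the pointwise equation \reff{evp}.
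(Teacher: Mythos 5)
Your proposal is correct and follows essentially the same route as the paper: the paper's proof is exactly the two-step argument ``$u\in\ker(I-\CC(0)-\D(0))$ implies $u\in V$ by Corollary \ref{corro1}(ii), hence $u$ solves the linearized problem \reff{evp} and is therefore zero by assumption.'' You merely spell out the reversal of the integration along characteristics that the paper leaves implicit in the word ``hence,'' and that bookkeeping is done correctly.
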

\begin{proof}
Suppose $(I-\CC(0)-\D(0))u=0$ for some $u \in U_0$. Then $u \in V$ by Corollary~\ref{corro1}(ii).
Hence, $u$ is a solution of  the linearized problem
\reff{evp}. But, by the assumption of Theorem \ref{thm:hopf}, this problem does not have nontrivial solutions.
\end{proof}

\begin{cor}
\label{corro}
For all nonnegative integers $l$ it holds $u_0 \in U_l$.
\end{cor}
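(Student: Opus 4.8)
The plan is to deduce Corollary \ref{corro} from the same device that proves the abstract Lemma \ref{lemu0}: run the classical implicit function theorem on the \emph{translated} equation $\F(0,s,u)=0$, treating the shift $s$ as the unknown, and exploit that $T(s)u_0$ solves it for every $s$. Concretely, the boundary value problem (\ref{eq:1.1})--(\ref{eq:1.2}) has been recast (via integration along characteristics) as the operator equation \reff{abstract}; reading this as $F(\la,u)=0$ for the corresponding map $F:[-\eps_0,\eps_0]\times U_0\to U_0$, the relation $F(0,u_0)=0$ holds because $u_0$ is a classical solution and \reff{rep1}--\reff{rep2} are then satisfied. Setting $\F(\la,s,u):=T(s)F(\la,T(-s)u)$ as in \reff{FFdef}, we get $\F(0,s,T(s)u_0)=T(s)F(0,u_0)=0$ for all $s\in\R$.

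Two facts must be checked at the base point $(s,u)=(0,u_0)$. First, $\F(0,\cdot,\cdot)\in C^\infty(\R\times U_0;U_0)$, which follows from Lemma \ref{prop} (read with the auxiliary function $v$ set equal to $u$), since $\F(0,s,u)$ is then the sum of the identity and the two smooth maps appearing there. Second, $\d_u\F(0,0,u_0)=\d_uF(0,u_0)$ is an isomorphism of $U_0$; here I would first compute, using the product rule together with \reff{CDdef} and the cancellation \reff{dv}, that $\d_uF(0,u_0)=I-\CC(0)-\D(0)$ — the contributions obtained by differentiating the slots occupied by the auxiliary function drop out by \reff{dv}, so no extra term appears — and then combine Corollary \ref{corro1}(i) (Fredholm of index zero) with Lemma \ref{B5anw} (injectivity) to conclude that this operator is bijective.

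With these in hand, the classical implicit function theorem applied to $\F(0,s,u)=0$ near $(0,u_0)$ yields that $s\mapsto T(s)u_0$ is $C^\infty$ from a neighborhood of $0$ into $U_0$; the group law $T(s+s')=T(s)T(s')$ propagates this to all of $\R$, so $T(\cdot)u_0\in C^l(\R;U_0)$, i.e.\ $u_0\in U_l$, for every nonnegative integer $l$. (Equivalently, one reuses the proof of Lemma \ref{lemu0} verbatim, with the bijectivity of $\d_uF(0,u_0)$ supplied by Corollary \ref{corro1}(i) and Lemma \ref{B5anw} in place of \reff{FH}--\reff{coerz}.) I do not foresee a real obstacle: the substance is already contained in Lemma \ref{prop} — whose proof rests on the translation identity $\tau_j(t+s,x,\xi,\la)=\tau_j(t,x,\xi,\la)+s$, so the shift disentangles cleanly — in Corollary \ref{corro1}, and in Lemma \ref{B5anw}; the only mildly delicate point is the clean identification $\d_uF(0,u_0)=I-\CC(0)-\D(0)$ afforded by \reff{dv}. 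In particular, none of the heavier regularity machinery of Section \ref{IFT} (Lemmas \ref{step3}--\ref{lemsmoothdep2}) is needed here.
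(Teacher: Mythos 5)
Your proposal is correct and follows essentially the same route as the paper: it works with the map $F_0(s,u)=T(s)\bigl(T(-s)u-C(0,T(-s)u)T(-s)u-D(0,T(-s)u,T(-s)u)\bigr)$ (i.e.\ the choice $v=u$, so Lemma \ref{prop} applies directly), identifies $\d_uF_0(0,u_0)=I-\CC(0)-\D(0)$ via \reff{dv}, gets bijectivity from Corollary \ref{corro1}(i) and Lemma \ref{B5anw}, and applies the classical implicit function theorem to conclude that $s\mapsto T(s)u_0$ is $C^\infty$, hence $u_0\in U_l$ for all $l$. This is precisely the paper's argument (cf.\ Remark \ref{tech}), so nothing further is needed.
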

\begin{proof}
Because of Lemma \ref{prop}  the map $F_0:\R \times U_0 \to U_0$, which is defined by 
$$
F_0(s,u):=T(s)(I-C(0,T(-s)u)T(-s)u-D(0,T(-s)u,T(-s)u)),
$$
is $C^\infty$-smooth. We have  $F_0(s,T(s)u_0)=0$ for all $s \in \R$, in particular,  $F_0(0,u_0)=0$.
Moreover, \reff{dv} yields
$\d_uF_0(0,u_0)=I-\CC(0)-\D(0)$.
Hence, from Corollary \ref{corro1}(i) and
Lemma  \ref{B5anw}  it follows that $\d_uF_0(0,u_0)$ is an isomorphism from $U_0$ to $U_0$, and
the classical implicit function theorem yields that the map $s \approx 0
\mapsto T(s)u_0 \in U_0$ is  $C^\infty$-smooth, i.e., $u_0 \in U_l$ for all $l$.
\end{proof}

The following two remarks point at two technicalities of our approach:
\begin{rem}\rm
\label{tech}
The function $v$ is artificially introduced in  \reff{rep1}--\reff{rep2}: If $(\la,u)$ is a solution to  \reff{rep1}--\reff{rep2} for some $v$, then it is for any $v$.

For our purposes we use two choices of $v$: $v=u$ and $v=u_0$.

First, in Corollary \ref{corro} we took $v=u$. Using the facts  that $u=T(s)u_0$ is a solution to the equation $u-T(s)\left(C(0,T(-s)u)T(-s)u+D(0,T(-s)u,T(-s)u)\right)=0$, 
that the left-hand side of this equation depends smoothly on $s$ and that the linearization of this  left-hand side in $u=u_0$, namely
$$
\frac{d}{du}[u-C(0,u)u-D(0,u,u)]_{u=u_0}=I-\CC(0)-\D(0),
$$
is Fredholm of index zero from $U_0$ to $U_0$, we proved that $u_0 \in U_l$ for all $l$.
But, unfortunately, we do not know if  $\frac{d}{du}[u-C(\la,u)u-D(\la,u,u)]_{u=u_0}$ is Fredholm of index zero from $U_0$ to $U_0$ for all $\la \approx 0$,
therefore the choice $v=u$ is not useful for further purposes.

Later on we will take $v=u_0$. Using the already known property that  $u_0 \in U_l$ for all $l$, now we can prove that  $T(s)\left(C(0,T(-s)u_0)T(-s)u+D(0,T(-s)u,T(-s)u_0)\right)$ 
depends smoothly on $s$. Moreover, due to Corollary \ref{corro1} the operator
$$
\frac{d}{du}[u-C(\la,u_0)u-D(\la,u,u_0)]_{u=u_0}=I-\CC(\la)-\D(\la)
$$
is Fredholm of index zero from $U_0$ to $U_0$.
Hence, we can apply Theorem~\ref{thm:IFT} to the equation $u=C(\la,u_0)u+D(\la,u,u_0)$.
\end{rem}

\begin{rem}\rm
If $u$ is a (classical) solution to \reff{eq:1.1}--\reff{eq:1.2}, 
then $u$ is a solution to  \reff{rep1}--\reff{rep2} for any $v \in U_0$. But we do not know 
if the converse is true: If a 
function $u \in U_0$ solves  \reff{rep1}--\reff{rep2} for some (and hence for any) $v \in U_0$, is then $u$ a $C^1$-smooth function, and hence a solution to  \reff{eq:1.1}--\reff{eq:1.2}?
This is the reason why in Theorem \ref{thm:hopf} we suppose that $u_0$ is a classical solution to  
\reff{eq:1.1}--\reff{eq:1.2}. It seems to be not sufficient to suppose that 
$u_0$ is only a continuous function satisfying
$u_0-C(\la,v)u_0-D(\la,u_0,v)=0$ for  some $v$. 

The $C^1$-smoothness  of $u_0$ is used in the proof of Lemma \ref{B3anw}.
 Because $u_0$ is  $C^1$-smooth, the coefficient 
functions $c_{jk}$,  $d_{jk}$ and  $d_{jkl}$
are   $C^1$-smooth, and therefore one can integrate by parts.
\end{rem}

Now we represent the problem  (\ref{eq:1.1})--(\ref{eq:1.2}) 
in the setting of Section \ref{IFT}. Define
$$
F: [-\eps_0,\eps_0] \times U_0\to U_0:\; F(\la,u):=u-C(\la,u_0)u-D(\la,u,u_0).
$$
Then, on the account of Lemma \ref{prop}  and Corollary \ref{corro}, the map
$$
(s,u) \in \R \times U_0 \mapsto \F(\la,s,u):=T(s)F(\la,T(-s)u) \in U_0
$$
is $C^\infty$-smooth for all $\la \in  [-\eps_0,\eps_0]$, i.e., the  condition \reff{infsmooth} is fulfilled.
Moreover, using the definitions \reff{cdef}, \reff{Cdef} and \reff{Fdef}, it is easy to show that the conditions \reff{ksmooth} and \reff{apriori} are fulfilled.
Further, Lemma \ref{corro1} (i) yields the condition \reff{FH}. And  finally, Lemmas \ref{B4anw}, 
 \ref{B5anw} and   \ref{B2} and  Remark \ref{B3} (with $U:=U_0$) imply the  condition
\reff{coerz}. Hence, Theorem~\ref{thm:IFT} can be applied to the equation $u=C(\la,u_0)u-D(\la,u,u_0)$, 
and this gives the assertions of Theorem 
\ref{thm:hopf}.

\subsection{More general boundary conditions}
\label{otherbc}

Theorem \ref{thm:hopf} can be generalized to more general boundary conditions
of the type
\beq\label{eq:1.2b}
\begin{array}{l}
\displaystyle
u_j(t,0) = \sum\limits_{k=1}^n\left(r_{jk}^{00}(t,\la)u_k(t,0)+r_{jk}^{01}(t,\la)u_k(t,1)\right)
,\quad  j=1,\ldots,m,\\
\displaystyle
u_j(t,1) = \sum\limits_{k=1}^n\left(r_{jk}^{10}(t,\la)u_k(t,0)+r_{jk}^{11}(t,\la)u_k(t,1)\right)
, \quad  j=m+1,\ldots,n.
\end{array}
\ee
Here $r_{jk}^{\alpha \beta}:\R \times [-\eps_0,\eps_0] \to \R$ are $C^\infty$-smooth and $2\pi$-periodic with respect to time.

Reflection boundary conditions of the type  \reff{eq:1.3} appear, for example, in semiconductor laser modeling (see \cite{LiRadRe,Rad,RadWu,Sieber}).
Boundary  conditions of the type  \reff{eq:1.2b} appear, for example, in boundary feedback control problems, see \cite{Pavel} for applications.

If  \reff{eq:1.3} is replaced by  \reff{eq:1.2b}, then \reff{rep1}--\reff{rep2} is replaced by 
\begin{eqnarray*}
\lefteqn{
u_j(t,x)=c_j(t,x,0,\la,v)\sum_{k=1}^n\left(r_{jk}^{00}(\tau_j(t,x,0,\la),\la)u_k(\tau_j(t,x,0,\la),0)\right.}\\
&&\left.+r_{jk}^{01}(\tau_j(t,x,0,\la),\la)u_k(\tau_j(t,x,0,\la),1)\right)\nonumber\\
&&+\int_0^x \frac{c_j(t,x,\xi,\la,v)}{a_j(\xi,\la)}f_j(\tau_j(t,x,\xi,\la),\xi,\lambda,u(\tau_j(t,x,\xi,\lambda),\xi),v(\tau_j(t,x,\xi,\lambda),\xi)d\xi\nonumber\\
&&\mbox{ for } j=1,\ldots,m,
\end{eqnarray*}
\begin{eqnarray*}
\lefteqn{
u_j(t,x)=c_j(t,x,1,\la,v)\sum_{k=1}^m\left(r_{jk}^{10}(\tau_j(t,x,1,\la),\la)u_k(\tau_j(t,x,1,\la),0)\right.}\\
&&\left.+r_{jk}^{11}(\tau_j(t,x,1,\la),\la)u_k(\tau_j(t,x,1,\la),1)\right)\\
&&-\int_x^1 \frac{c_j(t,x,\xi,\la,v)}{a_j(\xi,\la)}f_j(\tau_j(t,x,\xi,\la),\xi,\la,u(\tau_j(t,x,\xi,\la),\xi),v(\tau_j(t,x,\xi,\la),\xi)d\xi\nonumber\\
&&\mbox{ for } j=m+1,\ldots,n.
\end{eqnarray*}
and \reff{CCdef} 
is replaced by 
\begin{eqnarray*}
\left(\CC(\la)u\right)_j(t,x)&=&
c_{j}(t,x,0,\la,u_0)\sum_{k=1}^n\left(r_{jk}^{00}(\tau_j(t,x,0,\la),\la)u_k(\tau_j(t,x,0,\la),0)\right.\\
&&\left.+r_{jk}^{01}(\tau_j(t,x,0,\la),\la)u_k(\tau_j(t,x,0,\la),1)\right)
\mbox{ for }j=1,\ldots,m,
\end{eqnarray*}
\begin{eqnarray*}
\left(\CC(\la)u\right)_j(t,x)&=&
c_{j}(t,x,1,\la,u_0)\sum_{k=1}^n\left(r_{jk}^{10}(\tau_j(t,x,1,\la),\la)u_k(\tau_j(t,x,1,\la),0)\right.\\
&&\left.+r_{jk}^{11}(\tau_j(t,x,1,\la),\la)u_k(\tau_j(t,x,1,\la),1)\right)
\mbox{ for }
j=m+1,\ldots,n.
\end{eqnarray*}
In order to have a replacement for Lemma \ref{prop1}, we need that  there exist $\eps_1 \in (0,\eps_0]$ and $d>0$ such that for all $\la \in  [-\eps_1,\eps_1]$ the operator
$I-\CC(\la)$ is bijective from $U_0$ to $U_0$ and $\left\|(I-\CC(\la))^{-1}\right\|_{\LL(U_0)} \le d$. A sufficient condition (which is far from being necessary) for that 
is 
\beq
\label{suff}
\|\CC(\la)\|_{\LL(U_0)} \le \mbox{const} <1 \mbox{ for all } \la \approx 0.
\ee
Remark that the condition $\|\CC(0)\|_{\LL(U_0)} <1$ for all  $\la \approx 0$ is not sufficient for \reff{suff}, in general, because the map 
$\la \mapsto \CC(\la)$ is not continuous with respect to the
uniform operator norm in $\LL(U_0)$, in general. But another ``$\la$-independent'' condition is  sufficient for \reff{suff}, namely
\begin{eqnarray*}
\max_{t,x}c_{j}(t,x,0,0,u_0) \sum_{k=1}^n\left(\max_{t}|r_{jk}^{00}(t,0)|+\max_{t}|r_{jk}^{01}(t,0)|\right)<1 && \mbox{for }j=1,\ldots,m,\\
\max_{t,x}c_{j}(t,x,1,0,u_0) \sum_{k=1}^n\left(\max_{t}|r_{jk}^{10}(t,0)|+\max_{t}|r_{jk}^{11}(t,0)|\right)<1 && \mbox{for }j=m+1,\ldots,n.
\end{eqnarray*}
But the definition \reff{cdef} yields that $c_j(t,x,0,0,u_0) \le 1$ for $j=1,\ldots,m$ and  $c_j(t,x,1,0,u_0) \le 1$ for $j=m+1,\ldots,n$ if
\beq\label{cabsch}
\begin{array}{l}
\displaystyle
\min_{t,x}\frac{\d_{u_j}b_j(t,x,u_0(t,x),0)}{a_j(x,0)}>0 \mbox{ for } j=1,\ldots,m,\\
\displaystyle\max_{t,x}\frac{\d_{u_j}b_j(t,x,u_0(t,x),0)}{a_j(x,0)}<0 \mbox{ for } j=m+1,\ldots,n.
\end{array}
\ee
Therefore, if \reff{cabsch} is true and if 
\begin{eqnarray*}
\sum_{k=1}^n\left(\max_{t}|r_{jk}^{00}(t,0)|+\max_{t}|r_{jk}^{01}(t,0)|\right)<1 && \mbox{for }j=1,\ldots,m,\\
\sum_{k=1}^n\left(\max_{t}|r_{jk}^{10}(t,0)|+\max_{t}|r_{jk}^{11}(t,0)|\right)<1 && \mbox{for }j=m+1,\ldots,n,
\end{eqnarray*}
then \reff{suff} is true.

\section{Time-periodic solutions to second-order hyperbolic equations}
\label{wave}
\renewcommand{\theequation}{{\thesection}.\arabic{equation}}
\setcounter{equation}{0}

\subsection{Setting and main result}
\label{setting3}

In this section we consider the boundary value problem (\ref{eq:1.1a})--(\ref{eq:1.2a}),
where the coefficient
functions $a: [0,1]\times [-\eps_0,\eps_0]\to\R$ and 
$b: \R \times [0,1] \times [-\eps_0,\eps_0]\times \R^3\to\R$
are  $C^\infty$-smooth, 
$b$ is $2\pi$-periodic in $t$ and 
\beq \label{hyp0a}
a(x,\la)> 0  \mbox{ for all }   t \in \R,\,  x\in[0,1]   \mbox{ and } \la \in  [-\eps_0,\eps_0].
\ee 
Speaking about solutions to  (\ref{eq:1.1a})--(\ref{eq:1.2a}), 
we will mean classical solutions again.

Let $u_0$ be a 
solution to  (\ref{eq:1.1a})--(\ref{eq:1.2a}) with $\la=0$.
Denote
\beq
\label{bjdef}
\begin{array}{rcl} 
b_0(t,x)&:=&\d_{4}b(t,x,0,u_0(x,t),\d_tu_0(t,x),\d_xu_0(t,x)),\\
b_1(t,x)&:=&\d_{5}b(t,x,0,u_0(t,x),\d_tu_0(t,x),\d_xu_0(t,x)),\\
b_2(t,x)&:=&\d_{6}b(t,x,0,u_0(t,x),\d_tu_0(t,x),\d_xu_0(t,x))
\end{array} 
\ee
and
\begin{eqnarray*}
\lefteqn{
R_0(t):=\int_0^1\frac{1}{a(\eta,0)}\left(b_1\left(t-\int_0^\eta\frac{dx}{a(x,0)},\eta\right)+b_1\left(t+\int_0^\eta\frac{dx}{a(x,0)},\eta\right)\right)d\eta}\\
&&+\int_0^1\frac{1}{a(\eta,0)^2}\left(b_2\left(t-\int_0^\eta\frac{dx}{a(x,0)},\eta\right)-b_2\left(t+\int_0^\eta\frac{dx}{a(x,0)},\eta\right)\right)d\eta,
\end{eqnarray*}
\begin{eqnarray*}
\lefteqn{
S_0(t):=\int_0^1\frac{1}{a(\eta,0)}\left(b_1\left(t+\int_\eta^1\frac{dx}{a(x,0)},\eta\right)+b_1\left(t-\int_\eta^1\frac{dx}{a(x,0)},\eta\right)\right)d\eta}\\
&&+\int_0^1\frac{1}{a(\eta,0)^2}\left(b_2\left(t+\int_\eta^1\frac{dx}{a(x,0)},\eta\right)-b_2\left(t-\int_\eta^1\frac{dx}{a(x,0)},\eta\right)\right)d\eta.
\end{eqnarray*}
Here and in what follows we denote by $\d_jb$ ($j=4,5,6$) the partial derivative of $b$ with respect to the $j$-th argument, i.e.,
with respect to $u$ for $j=4$, with respect to $\d_tu$ for $j=5$ and with respect to $\d_xu$ for $j=6$.
 
\begin{thm}
\label{thm:hopfa}
Suppose \reff{hyp0a} and 
\beq
\label{Ra}
R_0(t)\not=0 \mbox{ for all } t \mbox{ or } S_0(t)\not=0 \mbox{ for all } t.
\ee
Assume that the linearized problem
\beq
\label{evpa}
\begin{array}{r}
\partial_t^2u  - a(x,0)^2\partial^2_xu  + b_0(t,x)u+ b_1(t,x)\d_tu+ b_2(t,x)\d_xu=0,\\  
u(t+2\pi,x)=u(t,x), \\
u(t,0)=\d_xu(t,1)=0
\end{array}
\ee
does not have a solution $u \not= 0$.

Then there exist $\eps>0$, $\delta >0$ and a $C^\infty$-map
$\hat u : \R \times [0,1] \times[-\eps,\eps] \to \R$  such that the following is true:

(i) For all $\la\in [-\eps,\eps]$ the function $\hat u(\cdot,\cdot,\la)$
is a solution  to (\ref{eq:1.1a})--(\ref{eq:1.2a}).

(ii) If $u$ is a solution to (\ref{eq:1.1a})--(\ref{eq:1.2a}) with 
$$ 
|\la|+ \max_{t,x}|u(x,t)-u_0(t,x)|+\max_{t,x}|\d_tu(x,t)-\d_tu_0(t,x)|+\max_{t,x}|\d_xu(x,t)-\d_xu_0(t,x)|<\de,
$$
then
$u(t,x)=\hat u(t,x,\la)$ for all  $t\in\R$ and $x\in[0,1]$.
In particular, $u_0=\hat u(\cdot,\cdot,0)$ is  $C^\infty$-smooth.
\end{thm}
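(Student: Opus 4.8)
The plan is to run, for the second-order equation, the same reduction by which Theorem \ref{thm:hopf} was deduced from Theorem \ref{thm:IFT} in Section \ref{Proof}. First I would pass to the Riemann invariants
$$
u_1:=\d_tu-a(x,\la)\d_xu,\qquad u_2:=\d_tu+a(x,\la)\d_xu ,
$$
so that $\d_tu=\tfrac12(u_1+u_2)$, $\d_xu=\tfrac1{2a(x,\la)}(u_2-u_1)$ and, because of $u(t,0)=0$,
$$
u(t,x)=\int_0^x\frac{u_2(t,\xi)-u_1(t,\xi)}{2a(\xi,\la)}\,d\xi .
$$
Since $a$ does not depend on $t$, a direct computation (factoring the wave operator) shows that $u$ solves \reff{eq:1.1a}--\reff{eq:1.2a} if and only if $(u_1,u_2)$ solves the first-order system
$$
\d_tu_1+a(x,\la)\d_xu_1+\tilde b=0,\qquad \d_tu_2-a(x,\la)\d_xu_2+\tilde b=0
$$
with the common nonlinearity $\tilde b=a(x,\la)\,\d_xa(x,\la)\,\d_xu+b(t,x,\la,u,\d_tu,\d_xu)$ — read as a function of $(t,x,\la,u_1,u_2)$ through the substitutions above — together with the reflection conditions $u_1(t,0)=-u_2(t,0)$, $u_2(t,1)=u_1(t,1)$, that is, \reff{eq:1.2} with $n=2$, $m=1$, $r_{12}\equiv-1$, $r_{21}\equiv1$. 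The one structural novelty with respect to Section \ref{Proof} is that $\tilde b$ depends on $u$ \emph{nonlocally}, through $u=\int_0^x(\dots)$. (The compatibility relation $\d_x\d_tu=\d_t\d_xu$ turns out to be automatic once $u(t,0)=0$, so $(u_1,u_2)$ is an admissible pair of unknowns.) Integrating along characteristics exactly as in \reff{rep1}--\reff{rep2}, with the auxiliary function $v$ taken equal to $u_0$, I would arrive at an equation $u=C(\la,u_0)u+D(\la,u,u_0)$ on $U_0:=\{u\in C(\R\times[0,1];\R^2):u(\cdot,x)$ is $2\pi$-periodic$\}$, with $(T(s)u)(t,x)=u(t+s,x)$, $A=\d_t$ and $U_l$ as in Section \ref{Proof}; the nonlocal term is absorbed into $D$, where it does no harm, spatial integration only gaining regularity.

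Next I would verify, for $F(\la,u):=u-C(\la,u_0)u-D(\la,u,u_0)$, all hypotheses of Theorem \ref{thm:IFT}. Condition \reff{infsmooth} follows, as in Lemma \ref{prop}, from $\tau_j(t+s,x,\xi,\la)=\tau_j(t,x,\xi,\la)+s$ and from the smoothness of Nemytskii operators; \reff{ksmooth} and \reff{apriori} are read off the explicit formulas. That $u_0\in U_l$ for all $l$ I would get exactly as in Corollary \ref{corro}, by applying the classical implicit function theorem to the $s$-translated equation with $v=u$, whose linearization at $u_0$ equals $I-\CC(0)-\D(0)$ (analogue of \reff{dv}). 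The Fredholm property \reff{FH} for small $\la$ follows, via the abstract verification of \ref{AppendixB}, from the analogue of Lemma \ref{B4anw}: the operators $\D(\la)^2$ and $\D(\la)\CC(\la)$ map $V=\{u\in U_0:\d_tu,\d_xu\in U_0\}$ into itself bounded in the $\|\cdot\|_0$-norm of the source (for the local part this is the integration-by-parts-along-characteristics argument of Lemma \ref{B4anw}; the nonlocal part, being an iterated spatial integral, is even smoother). Finally, injectivity of $I-\CC(0)-\D(0)$ is precisely the assumption that \reff{evpa} has no nontrivial solution, as in Lemma \ref{B5anw}.

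The main work — and the point I expect to be the obstacle — is the coercivity bound \reff{coerz}, i.e. the analogue of Lemma \ref{prop1}: one needs $\eps_1,d>0$ with $\|(I-\CC(\la))^{-1}\|_{\LL(U_0)}\le d$ for $|\la|\le\eps_1$. Here one has to compute the spatial round-trip factor of $\CC(\la)$: the contributions of $b_0=\d_4b$ are nonlocal, hence sit in $\D$, not in $\CC$, and do not enter it; the contributions of $\d_xa$ cancel after a full round trip $x:0\to1\to0$; and what survives is, up to sign and a positive constant in the exponent, the exponential of the integral defining $R_0(t)$ (respectively $S_0(t)$), the symmetrization of $b_1=\d_5b$ and the antisymmetrization of $b_2=\d_6b$ with the two characteristic shifts $\pm\int_0^\eta dx/a(x,0)$ reflecting that $\d_tu$ and $\d_xu$ enter $u_1,u_2$ with equal and with opposite signs, while $|r_{12}r_{21}|=1$. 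Having established this, I would reduce — as in Lemma \ref{prop1} — to a scalar functional equation at $x=1$ (resp.\ $x=0$) whose coefficient then has modulus a fixed power of $e^{R_0(t)}$ (resp.\ $e^{S_0(t)}$); assumption \reff{Ra}, which by periodicity and continuity forces that modulus to be either $<1$ everywhere or $>1$ everywhere, yields unique solvability with a $\la$-uniform bound by iterating the map forward (resp.\ backward). Combined with the Fredholm property, with injectivity at $\la=0$, and with the weak-convergence estimate
$$
\lim_{\la\to0}\ \sup_{\|u\|_0\le1}\langle(\CC(\la)+\D(\la)-\CC(0)-\D(0))u,h\rangle=0\quad\mbox{for all }h\in H:=L^2((0,1)\times(0,2\pi);\R^2),
$$
proved as in Lemma \ref{B3anw} by changing the time variable and using continuity in the mean of $L^2$-functions, the abstract verification of \ref{AppendixB} then delivers \reff{coerz}.

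With all hypotheses in place, Theorem \ref{thm:IFT} gives $\eps,\delta>0$ such that for $|\la|\le\eps$ the equation $u=C(\la,u_0)u+D(\la,u,u_0)$ has a unique solution $\hat u(\la)$ (in the $(u_1,u_2)$-variables) with $\|\hat u(\la)-u_0\|_0\le\delta$, and moreover $\hat u(\la)\in U_l$ for every $l$ and $\la\mapsto\hat u(\la)\in U_l$ is $C^\infty$-smooth. Reading the system as $\d_xu_1=-a^{-1}(\d_tu_1+\tilde b)$ and $\d_xu_2=a^{-1}(\d_tu_2+\tilde b)$ bootstraps all mixed $t,x$-derivatives, so $\hat u(\la)$ is $C^\infty$ on $\R\times[0,1]$; recovering $u$ from the $x$-integral above (and using $\d_tu=\tfrac12(u_1+u_2)$, $\d_xu=\tfrac1{2a}(u_2-u_1)$) one obtains a $C^\infty$ solution of \reff{eq:1.1a}--\reff{eq:1.2a} depending $C^\infty$-smoothly on $\la$ in every $C^k$-norm, hence a jointly $C^\infty$ map $\hat u(t,x,\la)$; the uniqueness statement (ii) and the $C^\infty$-smoothness of $u_0$ follow as in the proof of Theorem \ref{thm:hopf}. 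The single genuinely new ingredient, on which everything hinges, is to make condition \reff{Ra} do for the wave equation what \reff{Fred1} does for the first-order system in Lemma \ref{prop1}.
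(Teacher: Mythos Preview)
Your proposal is correct and follows essentially the same route as the paper's own proof in Section~\ref{Proof4.1}: reduce to Riemann invariants (the paper's $v_1,v_2$ are your $u_2,u_1$), integrate along characteristics to obtain the fixed-point equation \reff{abstracta}, and then verify the hypotheses of Theorem~\ref{thm:IFT} via the scheme of Appendix~B, with the key computation (Lemma~\ref{prop1a}) showing that condition~\reff{Ra} is exactly what makes the round-trip coefficient of $\CC(\la)$ have modulus $\neq 1$; the paper likewise splits $\D(\la)=\G(\la)+\HH(\la)$ into the partial-integral part and the genuinely smoothing nonlocal part coming from $J_0^\la$, matching your ``local part / nonlocal part'' distinction.
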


\begin{rem}\rm
\label{telegraph}
Special cases of equations of the type \reff{eq:1.1a} are nonlinear telegraph equations of the type
$$
\d_t^2u-a(x,\la)^2\d_x^2u+a_1(x,\la)\d_tu+a_2(x,\la)\d_xu+b(t,x,\la,u)=0
$$
with $a(x,\la)>0$ and $a_1(x,\la)>0$. Then
$$
R_0(t)=S_0(t)=2\int_0^1\frac{a_1(x,0)}{a(x,0)}dx\not=0,
$$
i.e. assumption \reff{Ra} is fulfilled independently of the choice of the functions $a_2$ and $b$ and of the starting solution $u_0$.
\end{rem}

\begin{rem}\rm
\label{lit}
Let us mention some results  related to Theorem \ref{thm:hopfa}.

In \cite{Rab2} P. H. Rabinowitz investigated fully nonlinear dissipative wave equations of the type
$$
\d_t^2u-\d_x^2u+a_0\d_tu+ \la b(t,x,u,\d_tu,\d_xu,\d_t^2,\d_t\d_xu,\d_x^2)=0, \; x \in (0,1)
$$
with homogeneous Dirichlet boundary conditions.
By means of Nash-Moser iterations
 it was shown that  
under natural assumptions for all $\la \approx 0$ there exists  
a time-periodic solution close to zero.
Using Schauder's fixed point theorem,  
M. \v{S}t\v{e}dr\'{y} \cite{Stedry}   generalized this result to the case of bounded domains of any dimension 
and to differential operators of higher order in $x$.
Both authors did not consider the question of smoothness of the data-to-solution map
and the question of time-periodic solutions far from zero.

In \cite{Craig} W. Craig considered  fully nonlinear dissipative wave equations of the type
$$
\d_t^2u-\d_x^2u+a_0\d_tu-(\la_0+\la)u+ b(t,x,u,\d_tu,\d_xu,\d_t^2u,\d_t\d_xu,\d_x^2u)=0, \; x \in (0,1)
$$
with  homogeneous Dirichlet boundary conditions (as well as their analogues with higher space dimension),
where $b(t,x,\cdot)$ is at least of quadratic order. It was shown (by means of a 
Liapunov-Schmidt reduction and Nash-Moser iterations) that  under natural assumptions 
there exists a branch of non-trivial   time-periodic solutions bifurcating at $\la=0$ from the 
solution $u=0$, which can be Lipschitz continuously parameterized   
by $\la \approx 0$.

In \cite{Raugel, Raugel1} J. Hale and G. Raugel considered systems of damped autonomous wave equations of the type
$$
\d_t^2u_j-\Delta u_j+(a_0+\la a_1(x))\d_tu_j+ a_2u_j+b_j(u)=0, \; x \in \Omega,\; j=1,2
$$
with homogeneous Dirichlet or Neumann boundary conditions in an $n$-dimensional bounded domain $\Omega$. By means of the Leray-Schauder fixed point theorem it was 
shown 
that under natural assumptions a non-degenerate periodic orbit persists
uniquely under small perturbations of $\la$. The question of smoothness of the data-to-solution map was not addressed.

In \cite{Kolesov} A.~Yu.~Kolesov and N.~Ch.~Rozov proved  existence of small time-periodic solutions to  parametrically excited damped wave equations of the type
$$
\d_t^2u+\eps\d_tu+(1+\eps\alpha \cos 2 \omega t)u=\eps^2a^2\d_x^2u+f(u,\d_tu),\; x \in (0,\pi)
$$
with homogeneous Dirichlet boundary conditions, where $0<\eps<<1$ is a small singular perturbation parameter, and $f$ and its first partial derivatives vanish at zero.

Of course, if assumption \reff{Ra} is dropped, then one should expect a
solution behavior which is completely different to Theorem \ref{thm:hopfa}, see, for example, the results of  M. Berti and M. Procesi in \cite{Procesi}
on periodically forced completely resonant nonlinear wave equations. 
\end{rem}

\begin{rem}\rm
\label{auto}
It is known how to prove a so-called $G$-invariant implicit function theorems for equivariant equations by means of classical  implicit function theorems
(see, e.g. \cite{Dancer}) and how to apply this to periodic solutions of autonomous evolution equations.
Hence, Theorem~\ref{thm:IFT} can be translated into an equivariant setting (i.e. with $T(s)F(\la,u)=F(\la,T(s)u)$ for all $s,\la,u$), and Theorems~\ref{thm:hopf} 
and \ref{thm:hopfa} 
can be translated into an  autonomous setting 
(in this case the local uniqueness assertion should be understood modulo time shifts, of course),
this will be done in \cite{KRaut}.

There is an essential  difference between the autonomous and the  nonautonomous cases.
 In the  nonautonomous case  the proof of smoothness of the data-to-solution map is 
essentially simpler if one assumes that the coefficient  $a(x,\la)$ is  $\la$-independent (cf. Remark~\ref{nonsm}).
This simplification does not occur in the autonomous case, i.e., for equations of the type
$$
\d_t^2u-a(x,\la)^2\d_x^2u+b(x,\la,u,\d_tu,\d_xu)=0.
$$
The reason is that one should scale the time in order to work in spaces of periodic functions with fixed period. 
Then, after time scaling, the unknown frequency $\om$ appears explicitely in the
main part of the equation, namely
$$
\om^2\d_t^2u-a(x,\la)^2\d_x^2u+b(x,\la,u,\om\d_tu,\d_xu)=0,
$$
and this way a coefficient dependence appears in the main part of the differential operator even if $a(x,\la)$ is $\la$-independent.
\end{rem}

\subsection{Proof of Theorem \ref{thm:hopfa}}
\label{Proof4.1}

In this subsection we will prove Theorem \ref{thm:hopfa}, hence we will suppose that all its assumptions are fulfilled.

Denote by $U_0$ the space of all continuous maps $v:[0,1] \times \R \to \R^2$ such that $v(\cdot,x)$ is $2\pi$-periodic,
with the norm 
$$
\|v\|_0:=\max_{j=1,2} \max_{t,x} |v_j(t,x)|.
$$
Given $v \in U_0$, write
\begin{eqnarray*}
(J_0^\la v)(t,x)&:=&\frac{1}{2}\int_0^x\frac{v_1(t,\xi)-v_2(t,\xi)}{a(\xi,\la)}d\xi,\\
(J_1v)(t,x)&:=&\frac{v_1(t,x)+v_2(t,x)}{2},\\
(J_2^\la v)(t,x)&:=&\frac{v_1(t,x)-v_2(t,x)}{2a(x,\la)}.\\
\end{eqnarray*}

\begin{lem}
\label{soleq}
(i) Let $u$ be a solution to  (\ref{eq:1.1a})--(\ref{eq:1.2a}). Then the pair
\beq
\label{vw}
v_1(t,x):=\d_tu(t,x)+a(x,\la)\d_xu(t,x),\; v_2(t,x):=\d_tu(t,x)-a(x,\la)\d_xu(t,x)
\ee
is a solution to
\begin{eqnarray}
\label{Gl}
&&\d_tv_1-a(x,\la)\d_xv_1=\d_tv_2+a(x,\la)\d_xv_2\nonumber\\
&&=-\frac{\d_xa(x,\la)}{2}(v_1-v_2)-b\left(t,x,\la,J_0^\la v,J_1 v,J_2^\la v\right),\\
\label{per}
&&v_1(t+2\pi,x)-v_1(t,x)=v_2(t+2\pi,x)-v_2(t,x)=0,\\
\label{bcond}
&&v_1(t,0)+v_2(t,0)=v_1(t,1)-v_2(t,1)=0.
\end{eqnarray} 

(ii) Let $v=(v_1,v_2)$ be a solution to \reff{Gl}-\reff{bcond}. Then $u=J^\la_0 v$
is a solution to  (\ref{eq:1.1a})--(\ref{eq:1.2a}).
\end{lem}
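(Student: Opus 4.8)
The plan is to prove both implications by elementary differentiation, relying only on the chain rule, the definitions \reff{vw} of $v_1,v_2$ (equivalently, the operators $J_0^\la,J_1,J_2^\la$), and the boundary and periodicity conditions; nothing deeper is needed, and the fact that $a$ depends on $x$ and $\la$ but not on $t$ is exactly what makes everything work.

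For part (i), starting from a classical solution $u$ of \reff{eq:1.1a}--\reff{eq:1.2a} and setting $v_1,v_2$ as in \reff{vw}, I would first record the purely algebraic identities $\d_tu=\frac12(v_1+v_2)=J_1v$ and $a(x,\la)\d_xu=\frac12(v_1-v_2)$, whence $\d_xu=J_2^\la v$ and, integrating in $x$ and using $u(t,0)=0$, also $u=J_0^\la v$. Then I would compute $\d_tv_1-a\d_xv_1$ and $\d_tv_2+a\d_xv_2$ by the chain rule: both collapse to $\d_t^2u-a(\d_xa)\d_xu-a^2\d_x^2u$, and substituting the wave equation $\d_t^2u-a^2\d_x^2u=-b(t,x,\la,u,\d_tu,\d_xu)$ together with $a\d_xu=\frac12(v_1-v_2)$ produces exactly the right-hand side of \reff{Gl}. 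Periodicity \reff{per} is inherited from \reff{eq:1.3a} by differentiating in $t$ and $x$, and the boundary conditions \reff{bcond} follow from $\d_tu(t,0)=0$ (differentiate $u(t,0)=0$ in $t$) and from $\d_xu(t,1)=0$.

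For part (ii), starting from a classical solution $v=(v_1,v_2)$ of \reff{Gl}--\reff{bcond} and putting $u:=J_0^\la v$, the relations $u(t,0)=0$ and $\d_xu=J_2^\la v$ are immediate. The crucial step is to recover $\d_tu=J_1v$: the first equality in \reff{Gl} says $\d_t(v_1-v_2)=a\,\d_x(v_1+v_2)$, so differentiating $u$ under the integral sign yields $\d_tu(t,x)=\frac12\int_0^x\d_\xi(v_1+v_2)(t,\xi)\,d\xi=\frac12\bigl((v_1+v_2)(t,x)-(v_1+v_2)(t,0)\bigr)$, and the first condition in \reff{bcond} removes the boundary term, leaving $\d_tu=\frac12(v_1+v_2)=J_1v$. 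Writing $W$ for the common value of the two sides of \reff{Gl} and differentiating once more, $\d_t^2u=\frac12(\d_tv_1+\d_tv_2)=W+\frac{a}{2}\d_x(v_1-v_2)$ while $a^2\d_x^2u=\frac{a}{2}\d_x(v_1-v_2)-\frac{\d_xa}{2}(v_1-v_2)$, so $\d_t^2u-a^2\d_x^2u=W+\frac{\d_xa}{2}(v_1-v_2)=-b(t,x,\la,J_0^\la v,J_1v,J_2^\la v)=-b(t,x,\la,u,\d_tu,\d_xu)$, which is \reff{eq:1.1a}. Periodicity of $u$ in $t$ follows from that of $v_1,v_2$ since $u$ is an $x$-integral of them, and \reff{eq:1.2a} reduces to $u(t,0)=0$ together with $\d_xu(t,1)=\frac{(v_1-v_2)(t,1)}{2a(1,\la)}=0$ by the second condition in \reff{bcond}.

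The computations are routine; the only points deserving care are bookkeeping ones. In part (i) the identity $\d_tv_1-a\d_xv_1=\d_tv_2+a\d_xv_2$ holds automatically once $v_1,v_2$ come from a single $u$, whereas in part (ii) this same identity is precisely what lets one integrate back to $\d_tu=J_1v$ — here one must not drop the boundary term $(v_1+v_2)(t,0)$ and must invoke \reff{bcond} to discard it. One should also check that $u=J_0^\la v$ is a genuine $C^2$ classical solution: once $\d_xu=J_2^\la v$ and $\d_tu=J_1v$ have been identified, all second-order derivatives of $u$ are continuous because $v$ is $C^1$ and $a$ is smooth; it is essential that $\d_tu$ has been replaced by $J_1v$ rather than left as an $x$-integral of $\d_tv_1-\d_tv_2$. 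I do not expect any step to present a real obstacle.
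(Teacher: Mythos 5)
Your proposal is correct and follows essentially the same route as the paper: part (i) by direct differentiation of $v_1,v_2$ and substitution of the wave equation together with $2a\d_xu=v_1-v_2$ and $u=J_0^\la v$, and part (ii) by recovering $2\d_tu=v_1+v_2$ from $\d_t(v_1-v_2)=a\d_x(v_1+v_2)$ plus the boundary condition at $x=0$, then combining the expressions for $\d_t^2u$ and $a^2\d_x^2u$ with \reff{Gl}. Your explicit attention to the boundary term $(v_1+v_2)(t,0)$ and to the $C^2$-regularity of $u=J_0^\la v$ only makes more careful what the paper treats implicitly.
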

\begin{proof}
(i) From \reff{vw} it follows $\d_tv_1=\d_t^2u+a\d_t\d_xu$ and $\d_xv_1=\d_t\d_xu+\d_xa\d_xu+a\d^2_xu$. Hence,  (\ref{eq:1.1a}) yields
\beq
\label{u_1}
\d_tv_1-a(x,\la)\d_xv_1=-a(x,\la)\d_xa(x,\la)\d_xu-b\left(t,x,\la,u,\d_tu,\d_xu\right).
\ee
But  \reff{vw} yields also $2a\d_xu=v_1-v_2$. Hence, \reff{eq:1.2a} implies $u=J_0^\la v$.
Inserting this into \reff{u_1}, we get the first  equation from \reff{Gl}. The second one can be proved analogously.
Finally, the boundary conditions \reff{bcond} follow directly from the boundary conditions  \reff{eq:1.2a}.

(ii) From \reff{Gl} it follows $\d_t(v_1-v_2)=a\d_x(v_1+v_2)$. Hence, \reff{bcond} and $u=J_0^\la v$ yield
$2\d_tu(t,x)=\int_0^x(\d_xv_1(t,\xi)+\d_xv_2(t,\xi))d\xi=v_1(t,x)+v_2(t,x),$
i.e.
\beq
\label{d^2}
2\d_t^2u(t,x)=\d_tv_1(t,x)+\d_tv_2(t,x).
\ee
Moreover, from $u=J_0^\la v$ follows $2a\d_xu=v_1-v_2$, i.e.
\beq
\label{d_x}
2a(x,\la)\d_x^2u=\d_xv_1-\d_xv_2-2\d_xa(x,\la)\d_xu.
\ee
But \reff{Gl}, \reff{d^2} and  \reff{d_x} imply \reff{eq:1.1a}. Finally, \reff{bcond} and $u=J_0^\la v$ yield  \reff{eq:1.2a}.
\end{proof}

Now we proceed as in Section \ref{Proof3.1}.
The characteristics of the hyperbolic system \reff{Gl} are given by (cf. \reff{charom})
\beq
\label{charoma}
\tau_j(t,x,\xi,\la):=t+(-1)^j\int_x^\xi\frac{d\eta}{a(\eta,\la)}.
\ee
Let us introduce an artificial linear diagonal term (which depends on a function $w \in U_0$)
and rewrite  the system \reff{Gl} as (cf. \reff{fdef})
\begin{eqnarray*}
&&\d_tv_1-a(x,\la)\d_xv_1+\left(\frac{\d_xa(x,\la)}{2}+\tilde{b}_1(t,x,\la,w)\right)v_1=f_1(t,x,\la,v,w),\\
&&\d_tv_2+a(x,\la)\d_xv_2+\left(-\frac{\d_xa(x,\la)}{2}+\tilde{b}_1(t,x,\la,w)\right)v_2=f_2(t,x,\la,v,w)
\end{eqnarray*}
with
\begin{eqnarray}
\label{tildeb}
\tilde{b}_j(t,x,\la,w)&:=&\frac{1}{2}\d_{5}b\left(t,x,\la,J_0^\la w(t,x),J_1 w(t,x),J_2^\la w(t,x)\right)\nonumber\\
&&-\frac{(-1)^{j}}{2a(x,\la)}\d_{6}b\left(t,x,\la,J_0^\la w(t,x),J_1 w(t,x),J_2^\la w(t,x)\right),\\
\label{f1}
f_1(t,x,\la,v,w)&:=&\frac{\d_xa(x,\la)}{2}v_2(t,x)+\tilde{b}_1(t,x,\la,w)v_1(t,x)\nonumber\\
&&-b\left(t,x,\la,J_0^\la v(t,x),J_1 v(t,x),J_2^\la v(t,x)\right),\\
\label{f2}
f_2(t,x,\la,v,w)&:=&-\frac{\d_xa(x,\la)}{2}v_1(t,x)+\tilde{b}_2(t,x,\la,w)v_2(t,x)\nonumber\\
&&-b\left(t,x,\la,J_0^\la v(t,x),J_1 v(t,x),J_2^\la v(t,x)\right).
\end{eqnarray}
Remark that now the partial derivatives $\d_{v_j}f_j(t,x,\la,v,w)|_{v=w}$ do not vanish like in 
Section~\ref{Proof3.1},
but they are good enough for our purposes, since they are integral operators:
$$
\d_{v_j}f_j(t,x,\la,v,w)|_{v=w}\bar{v}_j=-\frac{(-1)^j}{2}\d_4b\left(t,x,\la,J_0^\la w(t,x),J_1 w(t,x),J_2^\la w(t,x)\right)\int_0^x\bar{v}_j(\xi) d\xi.
$$
Moreover, the coefficients $c_j$ (cf. \reff{cdef}) read
\begin{eqnarray}
\label{c1}
c_1(t,x,\xi,\la,w)&:=&\exp\int_x^\xi\left(-\frac{\d_xa(\eta,\la)}{2a(\eta,\la)}
-\frac{\tilde{b}_1(\tau_1(t,x,\eta,\la),\eta,\la,w)}{a(\eta,\la)}\right)d\eta\nonumber\\
&=&\sqrt{\frac{a(x,\la)}{a(\xi,\la)}}
\exp\int_\xi^x\frac{\tilde{b}_1(\tau_1(t,x,\eta,\la),\eta,\la,w)}{a(\eta,\la)}d\eta,\\
\label{c2}
c_2(t,x,\xi,\la,w)&:=&\exp\int_x^\xi\left(-\frac{\d_xa(\eta,\la)}{2a(\eta,\la)}
+\frac{\tilde{b}_2(\tau_2(t,x,\eta,\la),\eta,\la,w)}{a(\eta,\la)}\right)d\eta\nonumber\\
&=&\sqrt{\frac{a(x,\la)}{a(\xi,\la)}}
\exp\int_x^\xi\frac{\tilde{b}_2(\tau_2(t,x,\eta,\la),\eta,\la,w)}{a(\eta,\la)}d\eta.
\end{eqnarray}

\begin{lem}
\label{soleq1}
The pair  $v=(v_1,v_2)$ satisfies \reff{Gl}-\reff{bcond} if and only if
\begin{eqnarray}
\label{IGl1}
v_1(t,x)&=&c_1(t,x,0,\la,w)v_2(\tau_1(t,x,0,\la),0)\nonumber\\
&&+\int_0^x\frac{c_1(t,x,\xi,\la,w)}{a(\xi,\la)}f_1(t,x,\xi,\la,v,w)d\xi,\\
v_2(t,x)&=&c_2(t,x,1,\la,w)v_1(\tau_2(t,x,1,\la,w),1)\nonumber\\
&&+\int_x^1\frac{c_2(t,x,\xi,\la,w)}{a(\xi,\la)}f_2(t,x,\xi,\la,v,w)d\xi
\label{IGl2}
\end{eqnarray}
for any $w\in U_1$.
\end{lem}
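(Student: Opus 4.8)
The plan is to prove the equivalence by integrating the diagonal hyperbolic system along its characteristics, exactly as in the derivation of \reff{rep1}--\reff{rep2} in Section~\ref{Proof3.1}; throughout one uses that $\tau_j(t,x,x,\la)=t$ and, from \reff{charoma}, $\d_\xi\tau_1(t,x,\xi,\la)=-1/a(\xi,\la)$ and $\d_\xi\tau_2(t,x,\xi,\la)=1/a(\xi,\la)$. For the ``only if'' direction I would restrict the first equation of the rewritten system, $\d_tv_1-a(x,\la)\d_xv_1+(\d_xa(x,\la)/2+\tilde b_1(t,x,\la,w))v_1=f_1(t,x,\la,v,w)$, to the characteristic $\xi\mapsto\tau_1(t,x,\xi,\la)$ through a fixed $(t,x)$. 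This turns it into a linear scalar ODE in $\xi$ for $\xi\mapsto v_1(\tau_1(t,x,\xi,\la),\xi)$ whose integrating factor, normalized to be $1$ at $\xi=x$, is precisely $c_1(t,x,\xi,\la,w)$ of \reff{c1}; hence $\frac{d}{d\xi}[c_1(t,x,\xi,\la,w)v_1(\tau_1(t,x,\xi,\la),\xi)]$ equals $c_1(t,x,\xi,\la,w)/a(\xi,\la)$ times $f_1$ evaluated along the characteristic. Integrating this identity in $\xi$ from $0$ to $x$ and then eliminating $v_1(\tau_1(t,x,0,\la),0)$ via the boundary relation $v_1(\cdot,0)=-v_2(\cdot,0)$ from \reff{bcond} gives \reff{IGl1}; the identity \reff{IGl2} is obtained in the same way from the second equation, integrating along $\xi\mapsto\tau_2(t,x,\xi,\la)$ from $x$ to $1$ with integrating factor $c_2$ of \reff{c2} and using $v_1(\cdot,1)=v_2(\cdot,1)$.

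For the ``if'' direction I would differentiate \reff{IGl1} and \reff{IGl2} in $t$ and $x$ (this is where the regularity $w\in U_1$, and enough regularity of $v$ to have $\d_xv_j\in U_0$, is used). Since $\tau_1(t,x,0,\la)$ is constant along the characteristic and $\ln c_1(t,x,\xi,\la,w)$ solves the associated transport equation, the operator $\d_t-a(x,\la)\d_x$ applied to the term $c_1(t,x,0,\la,w)v_2(\tau_1(t,x,0,\la),0)$ reproduces $-(\d_xa(x,\la)/2+\tilde b_1)$ times that term, while differentiating the integral term with its variable upper limit produces the boundary value $f_1(t,x,\la,v,w)$ at $\xi=x$ plus $-(\d_xa(x,\la)/2+\tilde b_1)$ times the integral; adding these contributions recovers the first line of \reff{Gl}, and symmetrically \reff{IGl2} yields the second line, the auxiliary term $\tilde b_1v_j$ cancelling because it was placed on both sides of \reff{Gl}. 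Finally, putting $x=0$ in \reff{IGl1} and $x=1$ in \reff{IGl2} gives the boundary conditions \reff{bcond}, and the time-periodicity \reff{per} follows from the $2\pi$-periodicity of all data together with periodicity of $v_2(\cdot,0)$ and $v_1(\cdot,1)$.

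I do not expect a conceptual obstacle: this is a standard ``integration along characteristics'' statement, completely parallel to the one used in Section~\ref{Proof3.1}. The points requiring care are purely computational --- matching the signs and the normalization of the integrating factors $c_1,c_2$ with \reff{c1}--\reff{c2} and with the orientation of the characteristics, justifying differentiation under the integral sign, and tracking the cancellation of the auxiliary term $\tilde b_1v_j$, which here plays the role that $\d_{u_j}b_j(\cdot)u_j$ plays in Section~\ref{Proof3.1}, the only difference being that $\d_{v_j}f_j|_{v=w}$ is now a (harmless) Volterra integral operator rather than zero.
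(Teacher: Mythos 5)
Your proposal is correct and follows exactly the route the paper takes: its proof of Lemma \ref{soleq1} consists of a one-line reference to integration along characteristics as in Section \ref{Proof3.1} (the derivation of \reff{rep1}--\reff{rep2}), which is precisely the computation you carry out, including the role of $c_1,c_2$ as integrating factors, the use of \reff{bcond} to eliminate the boundary values, and the cancellation of the artificial diagonal term $\tilde b_jv_j$. The only point worth double-checking is the sign coming from $v_1(\cdot,0)=-v_2(\cdot,0)$, where the paper itself is not fully consistent between \reff{IGl1}, the definition \reff{C} and the displayed formula for $\CC(\la)$.
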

\begin{proof}
The proof can be done by means of integration along characteristics 
like in   Section \ref{Proof3.1} for getting  equivalence of the PDE problem \reff{eq:1.1}-\reff{eq:1.2}
and the system of integral equations  \reff{rep1}-\reff{rep2}.
\end{proof}

Now, for $\la \in[-\eps_0,\eps_0]$ and $w \in U_0$ we define linear bounded operators $C(\la,w): U_0 \to U_0$ (cf. \reff{Cdef}) by
\beq
\label{C}
\left(C(\la,w)v\right)_j(t,x):=
\left\{
\begin{array}{l}
\displaystyle
c_1(t,x,0,\la,w)v_2(\tau_1(t,x,0,\la),0) \mbox{ for }j=1,\\
\displaystyle
-c_2(t,x,1,\la,w)v_1(\tau_2(t,x,1,\la),1) \mbox{ for }
j=2
\end{array}
\right.
\ee
and $C^\infty$-smooth nonlinear operators  $D(\la,\cdot,\cdot): U_0\times U_0 \to U_0$  (cf. \reff{Fdef}) by
\beq
\label{D}
D(\la,v,w)_j(t,x):=(-1)^j\int^x_{x_j}\frac{c_j(t,x,\xi,\la,w)}{a(\xi,\la)}f_j(\tau_j(t,x,\xi,\la),x,\xi,\la,v,w)d\xi
\ee
with $x_1:=0$ and $x_2:=1$.
Using this notation, system \reff{IGl1}--\reff{IGl2} can be written as the operator equation
\beq
\label{abstracta}
v=C(\la,w)v+D(\la,v,w).
\ee

Let us introduce a  $C_0$-group $T(s)$ on $U_0$ by
$
\left(T(s)v\right)(t,x):=v(t+s,x).
$
The corresponding infinitesimal generator is
$
A:=\d_t,
$
and the domains of definition of the powers $A^l$ are
$$
U_l:=\{v \in U_0:\; \d_j^tv \in U_0 \mbox{ for all } j=0,1,\ldots,l\}.
$$
They are Banach spaces with the norms
$$
\|v\|_l:=\sum_{j=0}^l\|\d_t^jv\|_0.
$$

\begin{lem}
\label{propa}
The maps $(s,v,w) \in \R \times U_0^2 \mapsto T(s)C(\la,T(-s)w)T(-s)v \in U_0$ and
$(s,v,w) \in \R \times U_0^2 \mapsto T(s)D(\la,T(-s)v,T(-s)w) \in U_0$ are $C^\infty$-smooth for all $\la \in  [-\eps_0,\eps_0]$.
\end{lem}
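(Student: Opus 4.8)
The plan is to mimic the proof of Lemma~\ref{prop}, exploiting two equivariance facts: the characteristics \reff{charoma} satisfy $\tau_j(t+s,x,\xi,\la)=\tau_j(t,x,\xi,\la)+s$, and the linear operators $J_0^\la,J_1,J_2^\la$ commute with the shift group $T(s)$ (they act only in the $x$-variable and carry no explicit dependence on $t$). As a consequence, the nuisance $s$-shifts applied to the merely continuous functions $v$ and $w$ will cancel against the outer $T(s)$, and the whole $s$-dependence gets pushed into the \emph{smooth} coefficient function $b$ and its partial derivatives $\d_4 b,\d_5 b,\d_6 b$, where it appears only through a translation of the first (time) argument, $t\mapsto t+s$.

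Concretely, first I would insert $T(\pm s)$ into \reff{tildeb}, \reff{f1}, \reff{f2}, \reff{c1}, \reff{c2}, \reff{C}, \reff{D} and use the two facts above to bring $T(s)C(\la,T(-s)w)T(-s)v$ and $T(s)D(\la,T(-s)v,T(-s)w)$ into the form obtained from \reff{C} and \reff{D} by replacing $c_j,\tilde b_j,f_j$ with $c_j^s,\tilde b_j^s,f_j^s$, where the superscript $s$ means ``shift by $s$ the first argument of every occurrence of $b$, $\d_5 b$ and $\d_6 b$''; for instance,
$$
\tilde b_j^s(t,x,\la,w):=\frac{1}{2}\,\d_5 b\left(t+s,x,\la,(J_0^\la w)(t,x),(J_1 w)(t,x),(J_2^\la w)(t,x)\right)-\frac{(-1)^j}{2a(x,\la)}\,\d_6 b\left(t+s,x,\la,(J_0^\la w)(t,x),(J_1 w)(t,x),(J_2^\la w)(t,x)\right).
$$
The crucial outcome is that in this rewritten form the arguments of $v$, of $w$, and of $J_0^\la w,J_1 w,J_2^\la w$ no longer carry any $s$-shift; $s$ survives only inside $b,\d_5 b,\d_6 b$ through $t\mapsto t+s$.

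Then I would observe that each of the two maps is a finite composition of operations that are either linear and bounded (hence $C^\infty$) or $C^\infty$ in the classical sense: the linear bounded operators $J_0^\la,J_1,J_2^\la:U_0\to U_0$; the linear bounded operators of composition with the fixed characteristic maps $(t,x,\eta)\mapsto(\tau_j(t,x,\eta,\la),\eta)$ and $(t,x)\mapsto(\tau_j(t,x,x_j,\la),x_j)$; the linear bounded integrations $\int_{x_j}^x(\cdot)\,d\xi$ and $\int_x^\xi(\cdot)\,d\eta$; the Nemytskii superposition operators generated by the $C^\infty$ functions $b,\d_4 b,\d_5 b,\d_6 b$, whose dependence on the remaining arguments, including the shift $t\mapsto t+s$, is smooth because $b\in C^\infty$; and finally pointwise multiplication, the exponential, and division by $a(\cdot,\la)>0$, all of which are $C^\infty$ on spaces of continuous functions. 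Invoking the standard differentiability theorem for Nemytskii operators on $C^0$-type spaces (\cite[Lemma 2.4.18]{Abraham}, \cite[Lemma 6.1]{Amann}), exactly as in the proof of Lemma~\ref{prop}, then gives the claim.

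The step I expect to be the main (indeed the only real) obstacle is the reduction carried out in the first two paragraphs, i.e.\ checking that after the substitution the only surviving $s$-dependence is the harmless shift $t\mapsto t+s$ inside $b,\d_5 b,\d_6 b$; this rests on $\tau_j(t+s,\cdot)=\tau_j(t,\cdot)+s$ and on $J_k^\la T(\pm s)=T(\pm s)J_k^\la$. I do not expect a genuine difficulty here: the analogous bookkeeping for the first-order system was done in Lemma~\ref{prop}, and the extra ingredients of the second-order case --- the operators $J_k^\la$, and the appearance of $\d_4 b$ (via $\d_{v_j}f_j|_{v=w}$) together with $\d_5 b,\d_6 b$ in $\tilde b_j$ --- are all linear or $C^\infty$ and $s$-independent in the relevant sense, so they fit into the same scheme with no new complications.
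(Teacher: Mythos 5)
Your proposal is correct and follows essentially the same route as the paper: the paper's proof consists precisely of the observation that $J_0^\la$, $J_1$, $J_2^\la$ commute with the shift group $T(s)$ (i.e.\ $(J_k^\la T(s)v)(t,x)=(J_k^\la v)(t+s,x)$) and then a reference back to the argument of Lemma~\ref{prop}, which is exactly the cancellation-plus-Nemytskii scheme you spell out. The only difference is that you make the bookkeeping explicit, whereas the paper leaves it implicit by citing Lemma~\ref{prop}.
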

\begin{proof}
Because of $(J_0^\la T(s)v)(t,x)=(J_0^\la v)(t+s,x)$, $(J_1 T(s)v)(t,x)=(J_1 v)(t+s,x)$  and  $(J_2^\la T(s)v)(t,x)=(J_2^\la v)(t+s,x)$, the proof follows the same line as the proof  of Lemma~\ref{prop}.
\end{proof}

Define the function  $v_0 \in U_0$ by
$$
v_0(t,x):=(\d_tu_0(t,x)+a(x,0)\d_xu_0(t,x),\d_tu_0(t,x)-a(x,0)\d_xu_0(t,x))
$$
and, for $\la \in   [-\eps_0,\eps_0]$, the operators $\CC(\la), \D(\la) \in \LL(U_0)$ by 
\beq
\label{CC}
\CC(\la):=C(\la,v_0),\; \D(\la):=\d_vD(\la,v,v_0)|_{v=v_0}.
\ee
Because of \reff{C} the operator $\CC(\la)$ is of ``argument shift type'' with vanishing diagonal part:
$$
\left(\CC(\la)v\right)_j(t,x)=
\left\{
\begin{array}{l}
\displaystyle
-c_1(t,x,0,\la,v_0)v_2(\tau_1(t,x,0,\la),0) \mbox{ for }j=1,\\
\displaystyle
c_2(t,x,1,\la,v_0)v_1(\tau_2(t,x,1,\la),1) \mbox{ for }
j=2.
\end{array}
\right.
$$

\begin{lem}
\label{prop1a}
There exist $\eps_1 \in (0,\eps_0]$ and $d>0$ such that for all $\la \in  [-\eps_1,\eps_1]$ the operator
$I-\CC(\la)$ is bijective from $U_0$ to $U_0$ and $\left\|(I-\CC(\la))^{-1}\right\|_{\LL(U_0)} \le d$.
\end{lem}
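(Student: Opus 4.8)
The plan is to mimic the proof of Lemma~\ref{prop1}. First I would identify $U_0$ with the product $U_0(1)\times U_0(1)$, where $U_0(1)$ is the Banach space of continuous $2\pi$-periodic (in $t$) real-valued functions on $\R\times[0,1]$ with the sup norm, and write $v=(v_1,v_2)$. By \reff{C} and \reff{CC} the operator $\CC(\la)$ splits as $\CC(\la)v=(K(\la)v_2,L(\la)v_1)$ with operators $K(\la),L(\la)$ on $U_0(1)$ given by $(K(\la)w)(t,x)=-c_1(t,x,0,\la,v_0)w(\tau_1(t,x,0,\la),0)$ and $(L(\la)w)(t,x)=c_2(t,x,1,\la,v_0)w(\tau_2(t,x,1,\la),1)$; by \reff{hyp0a} and smoothness of $b$ these are bounded uniformly in $\la\in[-\eps_0,\eps_0]$. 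For $f=(f_1,f_2)\in U_0$ the equation $v=\CC(\la)v+f$ is equivalent to $(I-K(\la)L(\la))v_1=f_1+K(\la)f_2$ together with $v_2=L(\la)v_1+f_2$, so it suffices to produce $\eps_1\in(0,\eps_0]$ and $d>0$ such that, in the case $R_0\ne0$ everywhere, $I-K(\la)L(\la)$ is invertible on $U_0(1)$ with inverse bounded by $d$ for all $|\la|\le\eps_1$ (and, symmetrically, in the case $S_0\ne0$ everywhere, the same for $I-L(\la)K(\la)$).

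Assume $R_0(t)\ne0$ for all $t$. I would compute $K(\la)L(\la)$ explicitly from \reff{C} and \reff{charoma}; the outcome depends on $w$ only through its trace $w(\cdot,1)$, so $(I-K(\la)L(\la))w=f$ is uniquely solvable with a $\la$-uniform bound on the inverse if and only if its restriction to $x=1$ is, since $w$ is then recovered on all of $\R\times[0,1]$ from its trace and $f$. Using $\tau_1(t,1,0,\la)=t+\int_0^1 a(\eta,\la)^{-1}d\eta$ and $\tau_2(s,0,1,\la)=s+\int_0^1 a(\eta,\la)^{-1}d\eta$, this restriction is a scalar functional equation $w(t,1)+\gamma(t,\la)\,w(t+\omega(\la),1)=f(t,1)$ with constant shift $\omega(\la)=2\int_0^1 a(\eta,\la)^{-1}d\eta>0$ and coefficient $\gamma(t,\la)=c_1(t,1,0,\la,v_0)\,c_2(\tau_1(t,1,0,\la),0,1,\la,v_0)$. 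By \reff{c1}--\reff{c2} the square-root factors $\sqrt{a(1,\la)/a(0,\la)}$ and $\sqrt{a(0,\la)/a(1,\la)}$ cancel, so $\gamma(\cdot,\la)$ is positive, continuous and $2\pi$-periodic, and depends continuously on $\la$ in the sup norm.

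The key point is to identify $\gamma(\cdot,0)$. Here I would insert $J_0^0 v_0=u_0$ (which uses $u_0(t,0)=0$), $J_1 v_0=\d_t u_0$ and $J_2^0 v_0=\d_x u_0$ into \reff{tildeb}, so that by \reff{bjdef} one has $\tilde b_1(t,x,0,v_0)=\frac12 b_1(t,x)+\frac{1}{2a(x,0)}b_2(t,x)$ and $\tilde b_2(t,x,0,v_0)=\frac12 b_1(t,x)-\frac{1}{2a(x,0)}b_2(t,x)$; then carrying out the two $\eta$-integrals in \reff{c1}--\reff{c2} along the relevant characteristics and comparing with the definition of $R_0$, I expect $\gamma(t,0)=\exp\bigl(\frac12 R_0(t+\int_0^1 a(\eta,0)^{-1}d\eta)\bigr)$. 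Hence $\gamma(t,0)=1$ precisely when $R_0(t+\int_0^1 a(\eta,0)^{-1}d\eta)=0$, and the hypothesis $R_0(t)\ne0$ for all $t$, together with continuity and $2\pi$-periodicity of $\gamma(\cdot,0)$, forces either $\max_t\gamma(t,0)<1$ or $\min_t\gamma(t,0)>1$.

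Finally I would solve the scalar equation on the Banach space $\widetilde U$ of continuous $2\pi$-periodic functions of $t$ with sup norm. With $M(\la)w:=\gamma(\cdot,\la)w(\cdot+\omega(\la))$ one has $\|M(\la)\|_{\LL(\widetilde U)}\le\max_t\gamma(t,\la)$, since the shift has norm one. If $\max_t\gamma(t,0)<1$, then for $|\la|$ small $\|M(\la)\|\le q<1$, so $I+M(\la)$ is invertible with $\|(I+M(\la))^{-1}\|\le(1-q)^{-1}$. If $\min_t\gamma(t,0)>1$, then for $|\la|$ small the substitution $t\mapsto t-\omega(\la)$ turns the equation into one whose associated operator has norm $\le q<1$, and the same Neumann-series estimate applies; note that no information on whether $\omega(\la)/2\pi$ is rational is needed, because one never passes to the limit of iterated shifts. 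Either way $I-K(\la)L(\la)$ is invertible on $U_0(1)$ with a $\la$-uniform bound, which by the first paragraph yields the claim when $R_0\ne0$ everywhere; the case $S_0\ne0$ everywhere is entirely parallel, working with $I-L(\la)K(\la)$, restricting to $x=0$, and checking that the corresponding coefficient equals $\exp\bigl(\frac12 S_0(t+\int_0^1 a(\eta,0)^{-1}d\eta)\bigr)$ at $\la=0$. The step I expect to be the most delicate is the bookkeeping in the third paragraph: combining the two characteristic integrals of $\tilde b_1$ and $\tilde b_2$ into exactly $\frac12 R_0$ (respectively $\frac12 S_0$), with the correct time shift and signs.
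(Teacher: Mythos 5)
Your proposal is correct and follows essentially the same route as the paper: the paper simply invokes Lemma~\ref{prop1} with $m=1$, $n=2$, $r_{12}=-1$, $r_{21}=1$ and checks that the conditions \reff{firstcond}, \reff{seccond} there become $R_0(t)\neq 0$, resp.\ $S_0(t)\neq 0$, via exactly the computation $\tilde b_1+\tilde b_2=\tfrac12 R_0$ (resp.\ $\tfrac12 S_0$) along the characteristics that you carry out. The only difference is that you unfold the mechanism of Lemma~\ref{prop1} (splitting $\CC(\la)$ into $K(\la)$, $L(\la)$, trace reduction at $x=1$ or $x=0$, the dichotomy $\max_t\gamma(t,0)<1$ or $\min_t\gamma(t,0)>1$ plus Neumann series) instead of citing it, which is a faithful, slightly more detailed rendering of the same argument.
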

\begin{proof}
This is Lemma \ref{prop1} with $m=1, n=2$ and with $r_{12}(t,\la)=-1, r_{21}(t,\la)=1$, and hence 
with ${c}_{12}(t,x,\la)$  replaced by 
$$
-c_1(t,x,0,\la,v_0)=-\sqrt{\frac{a(x,\la)}{a(0,\la)}}
\exp\int_0^x\frac{\tilde{b}_1(\tau_1(t,x,\eta,\la),\eta,\la,v_0)}{a(\eta,\la)}d\eta,
$$
and  with ${c}_{21}(t,x,\la)$  replaced by
$$
c_2(t,x,1,\la,v_0)=\sqrt{\frac{a(x,\la)}{a(1,\la)}}
\exp\int_x^1\frac{\tilde{b}_2(\tau_2(t,x,\eta,\la),\eta,\la,v_0)}{a(\eta,\la)}d\eta,
$$
cf. \reff{c1} and \reff{c2}.
This means that the condition $|{c}_{12}(t,1,0){c}_{21}(\tau_1(t,1,0,0),0,0)|\not=1$ (see \reff{firstcond})
now reads
$$
\int_0^1\frac{\tilde{b}_1(\tau_1(t,1,\eta,0),\eta,0,v_0)
+\tilde{b}_2(\tau_2(\tau_1(t,1,0,0),0,\eta,0),\eta,0,v_0)}{a(\eta,0)}d\eta\not=0 \mbox{ for all } 
t.
$$
On the account of \reff{bjdef} and  \reff{tildeb}, this is equivalent to
\begin{eqnarray*}
\lefteqn{
\int_0^1\left(b_1(\tau_1(t,1,\eta,0),\eta)+b_1(\tau_2(\tau_1(t,1,0,0),0,\eta,0),\eta)\right)d\eta}\\
&&+\int_0^1\frac{b_2(\tau_1(t,1,\eta,0),\eta)-b_2(\tau_2(\tau_1(t,1,0,0),0,\eta,0),\eta)}{a(\eta,0)}d\eta\not=0  \mbox{ for all } t,
\end{eqnarray*}
and, because of \reff{charoma}, this is equivalent to $R_0(t) \not=0$ for all $t$.

Similarly one shows that the condition $|{c}_{21}(t,0,0){c}_{12}(\tau_2(t,0,1,0),1,0)|\not=1$  from 
Lemma~\ref{prop1} with $m=1, n=2$
should be replaced by  $S_0(t) \not=0$.
\end{proof}

Because of \reff{tildeb}--\reff{f2} and \reff{D} the  operator $\D(\la)$ is the sum of two integral operators
$$
\D(\la)=\G(\la)+\HH(\la),
$$
where
$$
\left(\G(\la)v\right)_j(t,x):=
\left\{
\begin{array}{l}
\displaystyle
\int^x_0g_1(t,x,\xi,\la)v_2(\tau_1(t,x,\xi,\la),\xi)d\xi \mbox{ for }j=1,\\
\displaystyle
\int_x^1g_2(t,x,\xi,\la)v_1(\tau_2(t,x,\xi,\la),\xi)d\xi \mbox{ for }
j=2
\end{array}
\right.
$$
with
\begin{eqnarray*}
g_j(t,x,\xi,\la)&:=&(-1)^{j}\frac{c_j(t,x,\xi,\la,v_0)}{2a(\xi,\la)}\left[\d_5b(\tau,x,\xi,\la,J_0^\la v_0(\tau,x),J_1 v_0(\tau,x),
J_2^\la v_0(\tau,x))\right.\\
&&\left.-\frac{\d_6b(\tau,x,\xi,\la,J_0^\la v_0(\tau,x),J_1 v_0^\la(\tau,x),J_2^\la v_0(\tau,x))}{a(\xi,\la)}\right]_{\tau=\tau_j(t,x,\xi,\la)}
\end{eqnarray*}
and
$$
\left(\HH(\la)v\right)_j(t,x):=\int_x^{x_j}\int_0^\xi h_j(t,x,\xi,\eta,\la)\left(v_1(\tau_j(t,x,\eta,\la),\eta)+v_2(\tau_j(t,x,\eta,\la),\eta)\right)d\eta d\xi
$$
with
\begin{eqnarray*}
&&h_j(t,x,\xi,\eta,\la)\\
&&:=\frac{(-1)^jc_j(t,x,\xi,\la,v_0)}{2a(\xi,\la)}\Big[\d_4b(\tau,x,\xi,\la,J_0^\la v_0(\tau,x),J_1 v_0(\tau,x),
J_2^\la v_0(\tau,x))\Big]_{\tau=\tau_j(t,x,\xi,\la)}.
\end{eqnarray*}
The operator $\G(\la)$ is a so-called partial integral operator (with  one integration only, but working on functions with two arguments, cf. \cite{Appell1})
with vanishing diagonal part, while the operator $\HH(\la)$ is an integral operator.

Finally, let us introduce the Banach space $V$ with the norm $\|\cdot\|_V$ and the Hilbert space $H$ 
with the scalar product $\langle \cdot,\cdot \rangle$ as follows:
\begin{eqnarray*}
&&V:=\{v \in U_0: \; \d_tv,\d_xv \in U_0\},\; \|v\|_V:= \|v\|_0+ \|\d_tv\|_0+ \|\d_xv\|_0,\\
&&H:=L^2\left((0,1) \times (0,2\pi);\R^2\right), \; \langle v,w \rangle := \int_0^{2\pi}\int_0^1v(t,x)w(t,x)dxdt.
\end{eqnarray*}

\begin{lem}
\label{B3anwa}
It holds
$$
\lim_{\la \to 0} \sup_{\|v\|_0\le 1}\langle \left(\CC(\la)+\D(\la)-\CC(0)-\D(0)\right)v,w\rangle=0 \mbox{ for all } w \in H.
$$
\end{lem}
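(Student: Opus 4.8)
The plan is to follow the proof of Lemma~\ref{B3anw}. First decompose $\D(\la)=\G(\la)+\HH(\la)$ and treat the three differences $\CC(\la)-\CC(0)$, $\G(\la)-\G(0)$ and $\HH(\la)-\HH(0)$ separately. All three operators are built from the same ingredients: the argument $v$ is evaluated at the $\la$-dependent shift $\tau_j(t,x,\cdot,\la)$ from \reff{charoma}, integrated against a coefficient or kernel ($c_j$ for $\CC(\la)$, $g_j$ for $\G(\la)$, $h_j$ for $\HH(\la)$) which --- because $a$ and $b$ are $C^\infty$ and $u_0$ is $C^1$, so that $v_0$, $J_0^\la v_0$, $J_1v_0$ and $J_2^\la v_0$ are continuous in $(t,x)$ and depend continuously on $\la$ --- is jointly continuous, hence uniformly continuous and bounded, in all of its arguments, $\la$ included, on the relevant compact set.

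The key device, exactly as in Lemma~\ref{B3anw} with \reff{charom} replaced by \reff{charoma}, is that since $a=a(x,\la)$ is independent of $t$ one has $\tau_j(t+s,x,\xi,\la)=\tau_j(t,x,\xi,\la)+s$; hence for fixed $x$ (and fixed $\xi$, resp.\ fixed $\xi,\eta$, for the inner integrations in $\G(\la)$, resp.\ $\HH(\la)$) there is a change $t=\om_j(s,\cdot,\la)$ of the integration variable, a pure shift by an amount of order $\la$ independent of $t$, with $\tau_j(\om_j(s,\cdot,\la),x,\cdot,\la)=\tau_j(s,x,\cdot,0)$. Writing out $\langle(\CC(\la)-\CC(0))v,w\rangle$ componentwise via the explicit formula for $\CC(\la)$ preceding Lemma~\ref{prop1a} and applying this substitution to the $\la$-dependent summand only, the argument of $v$ becomes $\la$-free, the Jacobian is trivial ($dt=ds$), the mismatch of integration limits contributes only a boundary term that tends to $0$ with $\la$ (the shift is $O(\la)$ and $w\in L^2$), and all remaining $\la$-dependence sits in $c_j$ and in $w$. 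Taking $\sup_{\|v\|_0\le1}$, the $j$-th component is bounded by
$$\int_0^1\int_0^{2\pi}\bigl|\,c_j(\om_j(s,x,\la),x,x_j,\la,v_0)\,w_j(\om_j(s,x,\la),x)-c_j(s,x,x_j,0,v_0)\,w_j(s,x)\,\bigr|\,ds\,dx,$$
which tends to $0$ as $\la\to0$: split off the factor $|c_j(\cdot,\la)-c_j(\cdot,0)|$, small uniformly in $(s,x)$ by joint continuity of $c_j$ and by $\om_j(s,x,\la)\to s$, and then use boundedness of $c_j$ together with the continuity of translation in $L^1\bigl((0,1)\times(0,2\pi)\bigr)$ applied to $w_j$.

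For $\G(\la)-\G(0)$ and $\HH(\la)-\HH(0)$ the argument is the same, carried out under the additional inner integrals: perform the corresponding shift $t=\om_j(s,\cdot,\la)$ inside those integrals, reduce as above to an integral of the displayed type with $c_j$ replaced by $g_j$ resp.\ $h_j$, and pass to the limit under the inner integrals by dominated convergence, using that $g_j$, $h_j$ are bounded and $w_j\in L^1$. I do not expect a serious obstacle here; the one genuinely structural point is the $t$-independence of $a$ exploited above --- an analogue with $t$-dependent $a$ would break down, for the same reason as in Remark~\ref{tabh} --- and the only real nuisance is bookkeeping the three slightly different kernel structures ($\CC(\la)$, the partial integral operator $\G(\la)$, and the genuine integral operator $\HH(\la)$) together with the harmless boundary terms from the shifted limits.
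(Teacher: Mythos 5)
Your proposal is correct and follows essentially the same route as the paper: the paper's proof also reduces everything to the argument of Lemma \ref{B3anw} (change of integration variable $t=t_j(\tau_j(s,x,\cdot,0),x,\cdot,\la)$, which works because $a$ is $t$-independent, followed by boundedness/continuity of the kernels and continuity in the mean of $L^2$-functions), spelling out explicitly only the genuinely new term $\HH(\la)$. Your extra remark on the shifted integration limits is a harmless variant (the paper absorbs it via periodicity in $t$), so there is nothing to correct.
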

\begin{proof}
We follow the proof of Lemma \ref{B3anw}. In particular, the scalar product $\langle \HH(\la)v,w\rangle$ equals 
$$
\sum_{j=1}^2\int_0^1\int_x^{x_j}\int_0^\xi\int_0^{2\pi} h_j(t,x,\xi,\eta,\la)\Big[v_1(\tau,\eta)+v_2(\tau,\eta)\Big]_{\tau=\tau_j(t,x,\eta,\la)}w_j(t,x) dt d\eta d\xi dx.
$$
Hence, the change of the integration variables $t\mapsto s$ by means of  
$t=t_j(\tau_j(s,x,\eta,0),x,\eta,\la)$  yields
$$
\lim_{\la \to 0} \sup_{\|v\|_0\le 1}\langle \left(\HH(\la)-\HH(0)\right)v,w\rangle=0 \mbox{ for all } w \in H.
$$
\end{proof}

\begin{lem}
\label{B4anwa}
There exists $d>0$ such that for all $\la \in [-\eps_0,\eps_0]$ and $v \in V$ it holds
$\D(\la)^2v \in V$, $\D(\la)\CC(\la)v \in V$ and  
$$
\|\D(\la)^2v\|_V + \|\D(\la)\CC(\la)v\|_V \le d\|v\|_0.
$$
\end{lem}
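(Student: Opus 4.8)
The strategy is to imitate the proof of Lemma \ref{B4anw} from Section \ref{Proof3.1}, since $\D(\la) = \G(\la) + \HH(\la)$ and $\CC(\la)$ are, up to the new structure of the coefficients, of exactly the same types as there (an ``argument-shift'' part with vanishing diagonal, plus honest integral operators). First I would record the ``differentiation formulas'': for $v \in V$, compute $\d_t(\CC(\la)v)$, $\d_x(\CC(\la)v)$, $\d_t(\G(\la)v)$, $\d_x(\G(\la)v)$, $\d_t(\HH(\la)v)$ and $\d_x(\HH(\la)v)$ by differentiating under the integral sign and using $\d_t\tau_j(t,x,\xi,\la)=1$, $\d_x\tau_j(t,x,\xi,\la)=-(-1)^j/a(x,\la) = (-1)^{j+1}/a(x,\la)$ (cf.\ \reff{charoma}). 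As in Lemma \ref{B4anw}, each of these derivatives is a bounded operator on $U_0$ applied to $v$ plus a bounded operator applied to $\d_tv$; in particular $\d_x(\CC(\la)v)$ and $\d_x(\D(\la)v)$ involve the ``bad'' term $\frac{1}{a(x,\la)}$ multiplying a shift of $\d_tv$, which a priori only lies in $U_0$ and not better. The point of composing two such operators is to kill that bad term.

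Next, expanding the compositions by the product rule, the only problematic contributions to $\|\D(\la)^2v\|_V$ and $\|\D(\la)\CC(\la)v\|_V$ are the ones in which a factor $\d_tv$ survives: namely $\D(\la)\CC(\la)\d_tv$, $\D(\la)^2\d_tv$, and the terms where the outer operator contributes its $\frac{1}{a(x,\la)}$-factor, i.e.\ $D_1^x(\la)\CC(\la)\d_tv$ and $D_1^x(\la)\D(\la)\d_tv$ in the notation of Lemma \ref{B4anw} (here $D_1^x(\la)$ denotes the $\d_tv$-coefficient operator coming from $\d_x(\D(\la)\cdot)$). For the double-$\D$ term I would write $\left[\D(\la)^2\d_tv\right]_j$ as an iterated integral over $\xi$ and $\eta$ of a smooth kernel times $\d_tv_l$ evaluated along the composed characteristic $\tau_k(\tau_j(t,x,\xi,\la),\xi,\eta,\la)$, then use the key identity
$$
\frac{d}{d\xi}v_l\!\left(\tau_k(\tau_j(t,x,\xi,\la),\xi,\eta,\la),\eta\right)
=\left((-1)^{j+1}\frac{1}{a(\xi,\la)}-(-1)^{k+1}\frac{1}{a(\xi,\la)}\right)\d_tv_l\!\left(\tau_k(\tau_j(t,x,\xi,\la),\xi,\eta,\la),\eta\right),
$$
valid because in the term $k\neq j$; since $(-1)^{j+1}-(-1)^{k+1}=\pm2\neq0$ when $j\neq k$, the coefficient is bounded away from $0$, so one may solve for $\d_tv_l(\dots)$, substitute, and integrate by parts in $\xi$. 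This trades the $\d_tv$ for a $v$ plus boundary terms, all of which are estimated by $\const\|v\|_0$ with constant independent of $t,x,\la$ (using $a \ge \const > 0$ from \reff{hyp0a} and smoothness/boundedness of $b$, hence of the kernels $g_j, h_j, c_j$, and their $x$- and $t$-derivatives on the compact ranges). The $\D(\la)\CC(\la)\d_tv$ term is handled the same way: here the composed characteristic is $\tau_j(\tau_k(t,x,\xi,\la),\xi,x_j,\la)$ with $j\neq k$ (again $m=1,n=2$ forces $k\neq j$), so the same ``$\pm 2/a$'' trick applies. The remaining terms $D_1^x(\la)\CC(\la)\d_tv$ and $D_1^x(\la)\D(\la)\d_tv$ differ only by an extra harmless bounded factor $-\frac{1}{a(x,\la)}$ outside the $\xi$-integral and are treated identically.

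The main obstacle, as in Lemma \ref{B4anw}, is precisely isolating the $\d_tv$-containing pieces and verifying that in every such piece the two composed characteristic slopes are genuinely different, so that the ``derivative-along-$\xi$'' identity can be inverted and integrated by parts; for $n=2$ this is automatic because $\D(\la)$ and $\CC(\la)$ have vanishing diagonal, forcing the inner and outer indices to differ, and the two slopes $+1/a$ and $-1/a$ are never equal. One extra point compared to Lemma \ref{B4anw} is that here $\D(\la)=\G(\la)+\HH(\la)$ also contains the honest integral operator $\HH(\la)$ (with a double integral and a smooth kernel), but integral operators map $U_0$ boundedly into $V$ outright — differentiating in $t$ or $x$ only falls on the smooth kernel and the outer integration limits, producing $v$-terms, not $\d_tv$-terms — so $\HH(\la)$ contributes nothing to the difficulty and its estimates are routine. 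Collecting all contributions gives
$$
\|\d_t(\D(\la)\CC(\la)v)\|_0+\|\d_x(\D(\la)\CC(\la)v)\|_0+\|\d_t(\D(\la)^2v)\|_0+\|\d_x(\D(\la)^2v)\|_0\le\const\|v\|_0
$$
uniformly in $\la\in[-\eps_0,\eps_0]$, which together with the trivial bound $\|\D(\la)^2v\|_0+\|\D(\la)\CC(\la)v\|_0\le\const\|v\|_0$ is the claim.
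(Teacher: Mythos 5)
Your treatment of the $\G$-part is exactly the paper's route: the paper disposes of the compositions $\G(\la)^2$ and $\G(\la)\CC(\la)$ (and the corresponding $\d_x$-terms) by citing the argument of Lemma \ref{B4anw}, i.e. precisely your trick of trading $\d_tv$ for a $\xi$-derivative along the composed characteristics and integrating by parts, which works here because the two slopes $\pm 1/a$ never coincide. The genuine gap is your dismissal of $\HH(\la)$. You assert that for $\HH(\la)$ ``differentiating in $t$ or $x$ only falls on the smooth kernel and the outer integration limits, producing $v$-terms, not $\d_tv$-terms''. That is false as stated: the integrand of $\left(\HH(\la)v\right)_1(t,x)$ contains $v_1(\tau_1(t,x,\xi,\la),\eta)+v_2(\tau_1(t,x,\xi,\la),\eta)$, and the time argument $\tau_1(t,x,\xi,\la)$ depends on $t$ and $x$ (with $\d_t\tau_1=1$ and $\d_x\tau_1=1/a(x,\la)$, cf. \reff{charoma}), so naive differentiation under the integral sign does produce $\d_tv$-terms in both $\d_t(\HH(\la)v)$ and $\d_x(\HH(\la)v)$. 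Moreover, the mixed compositions $\HH\G\,\d_tv$, $\G\HH\,\d_tv$, $\HH^2\d_tv$, $\HH\CC\,\d_tv$ arising when you expand $\D(\la)^2\d_tv$ and $\D(\la)\CC(\la)\d_tv$ with $\D=\G+\HH$ are not covered by your slope-difference identity, since there the relevant $\xi$-dependence of the time argument comes from a single characteristic rather than a difference of two; you never address them because you declared $\HH$ harmless.

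The missing step, which is in fact the only new ingredient of this lemma compared with Lemma \ref{B4anw} and is the actual content of the paper's proof, is to show that $\HH(\la)$ alone maps $U_0$ boundedly into $V$: exchange the order of the $\xi$- and $\eta$-integrations and perform the change of variables $\zeta=\tau_1(t,x,\xi,\la)$ in the (now inner) $\xi$-integral, admissible since $\d_\xi\tau_1=-1/a(\xi,\la)\neq0$; after this substitution $v$ is evaluated only at the integration variables $(\zeta,\eta)$, so the $t$- and $x$-derivatives hit only the smooth kernel and the integration limits, giving $\|\HH(\la)v\|_V\le \mbox{const}\,\|v\|_0$ uniformly in $\la$ (equivalently, integrate by parts in $\xi$ using the single nonvanishing slope). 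Once this is in hand, every composition containing an $\HH$-factor is disposed of directly: $\|\HH(\la)w\|_V\le \mbox{const}\,\|w\|_0$ for $w=\G(\la)v,\;\HH(\la)v,\;\CC(\la)v$, and $\|\G(\la)\HH(\la)v\|_V\le \mbox{const}\,\|\HH(\la)v\|_V$ because $\G(\la)$ is bounded on $V$; thus only $\G^2$ and $\G\CC$ need the Lemma \ref{B4anw} machinery, which is cleaner than pushing the full $\D$-compositions through it. (A minor slip: the slope difference should read $\left((-1)^j-(-1)^k\right)/a(\xi,\la)$; the sign is harmless since only its modulus $2/a$ matters.)
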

\begin{proof}
As in the proof of Lemma  \ref{B4anw}, one can show that there exists $d>0$ such that for all $\la \in [-\eps_0,\eps_0]$ and $v \in V$ it holds
$\G(\la)^2v \in V$, $\G(\la)\CC(\la)v \in V$ and  
$$
\|\G(\la)^2v\|_V + \|\G(\la)\CC(\la)v\|_V \le d\|v\|_0.
$$
Hence, it remains to show that  there exists $d>0$ such that for all $\la \in [-\eps_0,\eps_0]$ and $v \in V$ it holds
$\HH(\la)v \in V$ and  
\beq
\label{Hest1}
\|\HH(\la)v\|_V \le d\|v\|_0.
\ee
To this end, consider $\left(\HH(\la)v\right)_1(t,x)$
(and similarly for  $\left(\HH(\la)v\right)_2(t,x)$):
\begin{eqnarray}
&&\left(\HH(\la)v\right)_1(t,x)\nonumber\\
&&=-\int^x_0\int_0^\xi h_j(t,x,\xi,\eta,\la)\left(v_1(\tau_1(t,x,\xi,\la),\eta)+v_2(\tau_1(t,x,\xi,\la),\eta)\right)d\eta d\xi\nonumber\\
&&=-\int_0^x\int_\eta^x h_j(t,x,\xi,\eta,\la)\left(v_1(\tau_1(t,x,\xi,\la),\eta)+v_2(\tau_1(t,x,\xi,\la),\eta)\right)d\xi d\eta\nonumber\\
&&=\int_0^x\int_\eta^x a(\om(t,x,\zeta,\la),\la) h_j(t,x,\om(t,x,\zeta,\la),\eta,\la)\left(v_1(\zeta,\eta)+v_2(\zeta,\eta)\right)d\zeta d\eta.
\label{Hest}
\end{eqnarray}
Here we used the change of variables $\xi\mapsto\zeta$ via 
$$
\zeta=\tau_1(t,x,\xi,\la)=t-\int_x^\xi\frac{d\eta}{a(\eta,\la)},
$$
and introduced the function $\om(t,x,\cdot,\la)$ as the inverse to $\tau_1(t,x,\cdot,\la)$. From \reff{Hest} it follows
$$
|(\HH(\la)v)_1(t,x)|+|\d_t(\HH(\la)v)_1(t,x)|+|\d_x(\HH(\la)v)_1(t,x)| \le \mbox{const} \|v\|_0,
$$
where the constant does not depend on $t,x$, $\la \approx 0$ and $v$.
This implies \reff{Hest1} as desired.
\end{proof}

Now from  Lemma~\ref{B2} it follows
\begin{cor} 
\label{corro1a}
For all $\la \in [-\eps_1,\eps_1]$ the following is true:

(i) The operator $I-\CC(\la)-\D(\la)$ is Fredholm of index zero from $U_0$ into $U_0$.

(ii) $\ker (I-\CC(\la)-\D(\la)) \subset V$.
\end{cor}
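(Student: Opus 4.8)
The plan is to read the corollary off from the abstract Lemma~\ref{B2} of~\ref{AppendixB}, in exact parallel with the derivation of Corollary~\ref{corro1} in the first-order setting. So the only real work is to check that, for every $\la\in[-\eps_1,\eps_1]$, the bounded operators $\CC(\la),\D(\la)\in\LL(U_0)$ from~\reff{CC}, together with the chain $V\hookrightarrow U_0\hookrightarrow H$ (where $V$ is densely and compactly embedded in $U_0$ and $U_0$ is continuously embedded in $H$), satisfy the hypotheses of Lemma~\ref{B2}. Those hypotheses are supplied verbatim by the lemmas of this subsection: bijectivity of $I-\CC(\la)$ with a $\la$-uniform bound on $(I-\CC(\la))^{-1}$ is Lemma~\ref{prop1a}; the ``gain of regularity in the second iterate'', namely $\D(\la)^2v\in V$, $\D(\la)\CC(\la)v\in V$ together with $\|\D(\la)^2v\|_V+\|\D(\la)\CC(\la)v\|_V\le d\|v\|_0$ uniformly in $\la$, is Lemma~\ref{B4anwa}; and the weak continuity of $\la\mapsto\CC(\la)+\D(\la)$ is Lemma~\ref{B3anwa}. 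Granting this, assertion~(i) is Lemma~\ref{B2}(i) and assertion~(ii) is Lemma~\ref{B2}(ii), with $U:=U_0$.

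It is worth recording the mechanism inside Lemma~\ref{B2}, since that is where the content lies. Writing $N(\la):=(I-\CC(\la))^{-1}\in\LL(U_0)$ one factors
\[
I-\CC(\la)-\D(\la)=(I-\CC(\la))\bigl(I-N(\la)\D(\la)\bigr),
\]
so it suffices to handle $I-N(\la)\D(\la)$. Expanding $N(\la)=I+\CC(\la)N(\la)$ gives
\[
\bigl(N(\la)\D(\la)\bigr)^2=N(\la)\,\D(\la)N(\la)\D(\la)=N(\la)\Bigl(\D(\la)^2+\bigl(\D(\la)\CC(\la)\bigr)\,N(\la)\D(\la)\Bigr),
\]
and by Lemma~\ref{B4anwa} the bracketed operator is bounded from $U_0$ into $V$; composing with the compact embedding $V\hookrightarrow U_0$ and then with $N(\la)$ shows that $(N(\la)\D(\la))^2$ is compact on $U_0$. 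Hence $N(\la)\D(\la)$ is power–compact, so $I-N(\la)\D(\la)$ is Fredholm of index zero by Riesz--Schauder theory (via $I-(N(\la)\D(\la))^2=(I-N(\la)\D(\la))(I+N(\la)\D(\la))$ and a homotopy $t\mapsto I-tN(\la)\D(\la)$), and therefore so is $I-\CC(\la)-\D(\la)$, being the composition of a bijection with a Fredholm operator of index zero. For~(ii): if $v\in\ker(I-\CC(\la)-\D(\la))$ then $v=N(\la)\D(\la)v$, whence $\D(\la)v=\D(\la)^2v+\D(\la)\CC(\la)v\in V$ by the same expansion; thus $(I-\CC(\la))v=\D(\la)v\in V$, and since $I-\CC(\la)$ (hence $N(\la)$) maps $V$ boundedly onto $V$ — a consequence of the argument–shift structure of $\CC(\la)$ and of the construction of the inverse in Lemma~\ref{prop1a} — we get $v\in V$.

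The genuinely delicate point, and the reason Lemma~\ref{B4anwa} is needed rather than some cruder compactness statement, is that $\D(\la)$ itself does \emph{not} smooth: it maps $U_0$ only into $U_0$, because the integrand in $\D(\la)$ is differentiable in $x$ but, owing to evaluation along the characteristics~\reff{charoma}, its $t$–derivative involves $\d_tv$, which for $v\in U_0$ need not exist. Only the compositions $\D(\la)^2$ and $\D(\la)\CC(\la)$ gain a derivative, via the integration–by–parts trick of Lemma~\ref{B4anwa} (converting a $\d_t$ on the inner factor into a $\d_\xi$, which is integrable by parts thanks to $a_1\ne a_2$, plus the analogous estimate for the integral operator $\HH(\la)$). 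Consequently the Fredholm argument must be routed through the \emph{second} iterate of $N(\la)\D(\la)$, not the first; this — together with the verification that $N(\la)$ preserves $V$ in part~(ii) — is the only structural subtlety, everything else being a direct citation of Lemmas~\ref{prop1a}, \ref{B3anwa}, \ref{B4anwa} and of the abstract Lemma~\ref{B2}.
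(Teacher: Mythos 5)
Your proposal follows the paper's route exactly: the paper's proof of Corollary \ref{corro1a} is nothing more than the citation of Lemma \ref{B2}(i),(ii) with $U:=U_0$, the hypotheses being supplied by Lemmas \ref{prop1a}, \ref{B3anwa} and \ref{B4anwa}, where the passage from the $V$-estimates of Lemma \ref{B4anwa} (valid for $v\in V$) to hypothesis \reff{absch} (needed for all $u\in U_0$) is Remark \ref{B3}, which you use implicitly via density when asserting that the bracketed operator is bounded from $U_0$ into $V$. The only inaccuracy is in your optional recap of the mechanism for assertion (ii): Lemma \ref{B2}(ii) concludes $v=\left((I-\CC(\la))^{-1}\D(\la)\right)^2v\in V$ directly from \reff{absch}, whereas your variant additionally needs the nowhere-proven claim that $(I-\CC(\la))^{-1}$ maps $V$ boundedly into $V$; since you ultimately just cite Lemma \ref{B2}(ii), this does not affect the correctness of the proposal.
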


\begin{lem}
\label{B5anwa}
The operator $I-\CC(0)-\D(0)$ is injective.
\end{lem}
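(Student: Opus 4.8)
The plan is to run the proof of Lemma~\ref{B5anw} almost verbatim, with the first-order reduction replaced by the second-order reduction of Lemma~\ref{soleq}. So I would start from an arbitrary $v\in U_0$ with $(I-\CC(0)-\D(0))v=0$ and, as a first step, upgrade its regularity: by Corollary~\ref{corro1a}(ii) the kernel of $I-\CC(0)-\D(0)$ is contained in $V$, so $\d_tv,\d_xv\in U_0$.

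The second step is to read the identity $v=\CC(0)v+\D(0)v$ as a statement about the linearized first-order system. By \reff{CC} together with the definitions \reff{C}, \reff{D}, \reff{c1}--\reff{c2} and \reff{bjdef}, this identity is exactly the linearization at $v_0$ (with $\la=0$ and $w=v_0$) of the integral system \reff{IGl1}--\reff{IGl2}. Running the integration-along-characteristics equivalence of Lemma~\ref{soleq1} and Lemma~\ref{soleq}(ii) backwards — which is legitimate since $v\in V$, so that $u:=J_0^0v$ is $C^2$ in $(t,x)$ — then shows that $u=J_0^0v$ is a classical solution of the linearized problem \reff{evpa}. Here one checks that after linearization the coefficients multiplying $u$, $\d_tu$, $\d_xu$ are precisely $b_0$, $b_1$, $b_2$ from \reff{bjdef}, because $\d_4b=\d_ub$, $\d_5b=\d_{\d_tu}b$, $\d_6b=\d_{\d_xu}b$ are evaluated along $u_0$, and because $J_0^0v=u$, $J_1v$ and $J_2^0v$ reduce to $u$, $\d_tu$, $\d_xu$ along the linearized relations $2\d_tu=v_1+v_2$, $2a(\cdot,0)\d_xu=v_1-v_2$ derived in the proof of Lemma~\ref{soleq}(ii).

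The final step invokes the hypothesis of Theorem~\ref{thm:hopfa}: problem \reff{evpa} has only the trivial solution, hence $u\equiv 0$. Since $v_1=\d_tu+a(\cdot,0)\d_xu$ and $v_2=\d_tu-a(\cdot,0)\d_xu$ (the linearized form of the two identities just mentioned), this forces $v\equiv 0$, which is injectivity.

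I expect the only delicate point to be the bookkeeping in the second step: making sure that the linearization at $v_0$ of \reff{IGl1}--\reff{IGl2}, and in particular of the diagonal terms $\d_{v_j}f_j|_{v=v_0}$ displayed right after \reff{f2}, really is equivalent, for $v\in V$, to the PDE problem \reff{evpa}. This is routine but slightly lengthy, entirely parallel to the first-order computation behind Lemmas~\ref{soleq} and \ref{soleq1}, and no genuinely new difficulty arises.
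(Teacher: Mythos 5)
Your proposal is correct and follows essentially the same route as the paper: kernel elements lie in $V$ by Corollary~\ref{corro1a}(ii), the identity $v=\CC(0)v+\D(0)v$ is translated back via integration along characteristics into the statement that $u=J_0^0v$ solves the linearized problem \reff{evpa}, and the hypothesis of Theorem~\ref{thm:hopfa} forces $u=0$, hence $v=0$. The paper's proof is just a terser version of this (it does not even spell out the final step $u=0\Rightarrow v=0$, which you correctly supply via $2\d_tu=v_1+v_2$, $2a(\cdot,0)\d_xu=v_1-v_2$).
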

\begin{proof}
Suppose $(I-\CC(0)-\D(0))v=0$ for some $v \in U_0$. Then $v \in V$ by Corollary~\ref{corro1a}~(ii).
Hence, $u=J_0^\la v$ is a solution of  the linearized problem
\reff{evpa}. But by the assumption of Theorem \ref{thm:hopfa}, the problem \reff{evpa} does not have nontrivial solutions.
\end{proof}

\begin{cor}
\label{corroa}
For all positive integers $l$ it holds $v_0 \in U_l$.
\end{cor}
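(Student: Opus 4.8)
The plan is to transcribe, essentially verbatim, the argument used for Corollary \ref{corro} in the first-order case. The key observation is that the solution of the fixed-point equation \reff{abstracta} does not depend on the artificially introduced function $w$, so freezing $w=v_0$ and $\la=0$ turns \reff{abstracta} into an equation whose zero set contains the whole orbit $s\mapsto T(s)v_0$ and whose right-hand side is $C^\infty$ jointly in $s$ and in the unknown; the classical implicit function theorem then forces this orbit to be $C^\infty$ in $s$, which is exactly the assertion $v_0\in U_l$ for all $l$.

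Concretely, I would introduce $F_0:\R\times U_0\to U_0$,
$$F_0(s,v):=v-T(s)C(0,T(-s)v)T(-s)v-T(s)D(0,T(-s)v,T(-s)v),$$
which is $C^\infty$-smooth by Lemma \ref{propa}. Since $u_0$ is a classical solution of \reff{eq:1.1a}--\reff{eq:1.2a} at $\la=0$, Lemma \ref{soleq}(i) gives that $v_0$ solves \reff{Gl}--\reff{bcond} with $\la=0$, hence by Lemma \ref{soleq1} the identity $v_0=C(0,w)v_0+D(0,v_0,w)$ holds for all $w\in U_1$, and by continuity together with density of $U_1$ in $U_0$ for all $w\in U_0$. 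Applying $T(s)$ with $w=v_0$ yields $F_0(s,T(s)v_0)=0$ for all $s$, in particular $F_0(0,v_0)=0$. Differentiating the same identity in $w$ at $w=v_0$ (the analogue of \reff{dv}) shows that the two $w$-derivative contributions cancel, so $\d_v F_0(0,v_0)=I-\CC(0)-\D(0)$ (cf. \reff{CC}). By Corollary \ref{corro1a}(i) this operator is Fredholm of index zero, and by Lemma \ref{B5anwa} it is injective, hence an isomorphism of $U_0$. The classical implicit function theorem applied to $F_0=0$ near $(0,v_0)$ produces a $C^\infty$ branch which, by local uniqueness, must coincide with $s\mapsto T(s)v_0$ near $s=0$; smoothness of this orbit near $s=0$ means $v_0\in D(A^l)=U_l$ for every $l$ (and then $T(\cdot)v_0\in C^l(\R;U_0)$ on all of $\R$ by the group property).

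The only step requiring genuine care is the cancellation of the $w$-derivative terms, i.e.\ establishing the analogue of \reff{dv} in the present setting: unlike in Section \ref{Proof3.1}, the diagonal parts of the nonlinearities $f_j$ do not vanish here (they are integral operators), so one must first justify that $w\mapsto C(0,w)v_0+D(0,v_0,w)$ is differentiable at $w=v_0$ — which is the Nemytskii-operator smoothness already invoked in Lemma \ref{propa} — before differentiating the identity $v_0=C(0,w)v_0+D(0,v_0,w)$ that is constant in $w$. Everything else is a routine copy of the first-order proof.
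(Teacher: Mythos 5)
Your proposal is correct and follows essentially the same route as the paper: the paper's proof of Corollary \ref{corroa} introduces exactly the map $F_0(s,v)=T(s)\bigl(T(-s)v-C(0,T(-s)v)T(-s)v-D(0,T(-s)v,T(-s)v)\bigr)$, invokes Lemma \ref{propa} for its smoothness, uses $F_0(s,T(s)v_0)=0$ and the fact that $\d_vF_0(0,v_0)=I-\CC(0)-\D(0)$ is an isomorphism (Corollary \ref{corro1a}(i) plus Lemma \ref{B5anwa}), and concludes by the classical implicit function theorem. Your explicit treatment of the cancellation of the $w$-derivative terms (the analogue of \reff{dv}) and of passing from $w\in U_1$ to $w\in U_0$ by density only spells out details the paper leaves implicit.
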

\begin{proof}
Because of Lemma \ref{propa}  the map $F_0:\R \times U_0 \to U_0$, which is defined by 
$$
F_0(s,v):=T(s)(I-C(0,T(-s)v)T(-s)v-D(0,T(-s)v,T(-s)v)),
$$
is $C^\infty$-smooth. Moreover, it holds $F_0(s,T(s)v_0)=0$ for all $s \in \R$, in particular,  $F_0(0,v_0)=0$. Finally, from
Lemma  \ref{B5anwa}  it follows that $\d_uF_0(0,v_0)$ is an isomorphism from $U_0$ onto $U_0$. Hence, 
the classical implicit function theorem yields that the map $s \approx 0
\mapsto T(s)v_0 \in U_0$ is  $C^\infty$-smooth, i.e., $v_0 \in U_l$ for all $l$.
\end{proof}

Now we represent the problem  (\ref{eq:1.1a})--(\ref{eq:1.2a}), i.e., the  problem  (\ref{Gl})--(\ref{bcond}), 
in the setting of Section~\ref{IFT}. Define
$$
F: [-\eps_0,\eps_0] \times U_0\to U_0:\; F(\la,v):=v-C(\la,v_0)v-D(\la,v,v_0).
$$
Then, because of Lemma \ref{propa}  and Corollary \ref{corroa}, the map
$$
(s,v) \in \R \times U_0 \mapsto \F(\la,s,v):=T(s)F(\la,T(-s)v) \in U_0
$$
is $C^\infty$-smooth for all $\la \in  [-\eps_0,\eps_0]$, i.e., the condition \reff{infsmooth} is fulfilled.
Moreover, it is easy to show that the conditions \reff{ksmooth} and \reff{apriori} are fulfilled.
Further, Lemma \ref{corro1a} (i) yields  \reff{FH}. And  finally, Lemmas \ref{B4anwa}, 
 \ref{B5anwa} and 
  \ref{B2} and  Remark \ref{B3} (with $U:=U_0$) imply 
\reff{coerz}. Hence, applying Theorem  \ref{thm:IFT}  to the equation $v=C(\la,v_0)v+D(\la,v,v_0)$, 
we finish the proof of  Theorem \ref{thm:hopfa}.

\begin{appendix}

\section{Appendix
}

\label{AppendixA}

\renewcommand{\theequation}{A.\arabic{equation}}
\setcounter{equation}{0}

\setcounter{thm}{0}
    \renewcommand{\thethm}{A.\arabic{thm}}

In this appendix we present a simple linear version of the so-called fiber contraction principle (see, e.g., \cite[Section 1.11.3]{Chicone}):

\begin{lem}
\label{A2}
Let $U$ be a Banach space and $u_1,u_2,\ldots \in U$ a converging sequence. Further, let $A_1,A_2,\ldots \in \LL(U)$
be a sequence of linear bounded operators on $U$ such that there exists $c<1$ such that for all $u \in U$ it holds
\beq
\label{a1}
\|A_n u\| \le c \|u\| \mbox{ for all } n=1,2,\ldots
\ee
and \beq
\label{a2}
A_1u , A_2u,\ldots \,\,\mbox{\rm converges in  }  U.
\ee
Finally, let $v_1,v_2,\ldots \in U$ be a sequence such that for all positive integers $n$ we have 
\beq
\label{a3}
v_{n+1}=A_nv_n+u_n.
\ee
Then the sequence  $v_1,v_2,\ldots$ converges in $U$.
\end{lem}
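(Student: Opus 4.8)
The plan is to reduce the statement to the routine estimate for a scalar contraction recursion, once the correct candidate limit has been identified.

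First I would use \reff{a1} together with \reff{a2} to produce a limit operator: for each $u\in U$ set $A_\infty u:=\lim_{n\to\infty}A_nu$, which exists by \reff{a2}. Linearity of $A_\infty$ is immediate, and letting $n\to\infty$ in \reff{a1} gives $\|A_\infty u\|\le c\|u\|$, so $A_\infty\in\LL(U)$ with $\|A_\infty\|\le c<1$. Hence $I-A_\infty$ is boundedly invertible (Neumann series). Writing $u:=\lim_nu_n$ for the limit of the given convergent sequence, I would put $v_\infty:=(I-A_\infty)^{-1}u$, i.e.\ the unique fixed point of the map $w\mapsto A_\infty w+u$. This $v_\infty$ is the intended limit of $(v_n)$.

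Next I would set $e_n:=v_n-v_\infty$ and subtract the fixed point identity $v_\infty=A_\infty v_\infty+u$ from \reff{a3}. Using $A_nv_n=A_ne_n+A_nv_\infty$ this gives
$$
e_{n+1}=A_ne_n+\delta_n,\qquad \delta_n:=(A_n-A_\infty)v_\infty+(u_n-u).
$$
Here $\|\delta_n\|\to 0$: the term $(A_n-A_\infty)v_\infty$ tends to zero by the very definition of $A_\infty$, which is precisely \reff{a2} applied at the fixed point $v_\infty$, and $u_n-u\to 0$ by hypothesis. Then \reff{a1} yields $\|e_{n+1}\|\le c\|e_n\|+\|\delta_n\|$. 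Since $(\|\delta_n\|)$ converges it is bounded by some $M$, so an easy induction gives $\|e_n\|\le\|e_1\|+M/(1-c)$ for all $n$; taking $\limsup$ in the recursion and using $\|\delta_n\|\to 0$ forces $\limsup_n\|e_n\|\le c\,\limsup_n\|e_n\|$, hence $\limsup_n\|e_n\|=0$. Therefore $v_n\to v_\infty$. (Equivalently, one unrolls to $\|e_{n+1}\|\le c^n\|e_1\|+\sum_{j=1}^n c^{n-j}\|\delta_j\|$ and splits the sum at a large index where $\|\delta_j\|$ is small.)

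I do not expect a genuine obstacle here; the only point deserving care is conceptual rather than technical. The hypothesis \reff{a2} is exploited not at an a priori prescribed vector but at $v_\infty$, which is itself constructed from the pointwise limit operator $A_\infty$. Once one observes that \reff{a1} and \reff{a2} legitimately produce $A_\infty$ with $\|A_\infty\|<1$, so that $v_\infty$ is well defined, everything else is the standard contraction argument, and in particular no completeness-based Cauchy argument for $(v_n)$ is needed — the explicit limit $v_\infty$ does the work.
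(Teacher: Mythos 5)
Your proposal is correct and follows essentially the same route as the paper: define the strong limit operator $A$ (your $A_\infty$) with $\|A\|\le c<1$, take the unique fixed point $v=Av+u$ as the candidate limit, and control $\|v_n-v\|$ through the recursion $r_{n+1}\le c\,r_n+s_n$ with $s_n\to0$. The only difference is cosmetic — you close with a $\limsup$ argument (after establishing boundedness), whereas the paper unrolls the recursion and splits the geometric sum, which you yourself note as the equivalent alternative.
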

\begin{proof}
Because of \reff{a2} there exists $A \in \LL(U)$ such that for all $u \in U$ we have $A_nu \to Au$ in $U$ for $n \to \infty$.
Moreover, \reff{a1} yields that $\|Au\| \le c\|u\|$ for all $u \in U$. Because of $c < 1$ there exists exactly one $v \in U$ such that
$v=Av+u$, where $u \in U$ is the limit of the sequence $u_1,u_2,\ldots \in U$.

Let us show that $v_k \to v$ in $U$  for $k \to \infty$.
Because of \reff{a3} we have 
\beq
\label{a}
v_{n+1}-v
=A_n(v_{n}-v)+\left(A_n-A\right)v+u_{n}-u.
\ee
Using the notation
$r_{n}:=\|v_n-v\|$ and 
$s_{n}:=\|\left(A_{n}-A\right)v\| + \|u_{n}-u\|$,
we get from \reff{a1} and \reff{a}
\beq
r_{n+1} \le c r_{n} +s_{n} \le c^2  r_{n-1}+cs_{n-1}+s_{n}
\le \ldots\le c^{n}r_{1}+\sum_{m=0}^n c^{n-m} s_{m}.
\label{riteration}
\ee
Take $s_0>0$ such 
that $s_{n} \le s_0$ for all $n$.
Then \reff{riteration} implies for $0 \le l \le n$
\beq
r_{n+1} 
\le c^n \left(r_1+s_0\sum_{m=0}^l c^{-m}\right)+\frac{1}{1-c}\max_{l<m\le n}s_{m}.
\label{riteration1}
\ee
But we have
\beq
\label{max}
\max_{l<m\le n}s_{m} \to 0 \mbox{ for } l \to \infty \mbox{ uniformly in } n,
\ee
and the assumption $c<1$ implies
\beq
\label{sum}
 c^n \left(r_1+s_0\sum_{m=0}^l c^{-m}\right) \to 0 \mbox{ for } n \to \infty \mbox{ for any fixed } l.
\ee
Now \reff{riteration1}--\reff{sum} yield that $r_{n} \to 0$ for $n \to \infty$,
which is the claim.
\end{proof}

\end{appendix}

\begin{appendix}
\section{
Appendix 
}

\label{AppendixB}
\renewcommand{\theequation}{B.\arabic{equation}}
\setcounter{equation}{0}

\setcounter{thm}{0}
    \renewcommand{\thethm}{B.\arabic{thm}}

In this appendix we present an approach how to verify the assumptions \reff{FH} and \reff{coerz} of Theorem \ref{thm:IFT}. For similar approaches see 
\cite{Magnus,RO}.

\begin{lem}
\label{B2}
Let $(U,\|\cdot\|_U)$ and  $(V,\|\cdot\|_V)$ be Banach spaces and  $(H,\langle\cdot,\cdot\rangle)$ be a Hilbert space such that
\beq
\label{VU}
\mbox{$V$ is compactly embedded into $U$}
\ee
and
\beq
\label{UH}
\mbox{$U$ is continuously embedded into $H$.}
\ee
Further, let $\eps_0>0$, and for any $\la \in [-\eps_0,\eps_0]$ let  $\CC(\la),\D(\la) \in {\cal L}(U)$ 
be such that
\beq
\label{Hlim}
\lim_{\la \to 0} \sup_{\|u\|_U\le 1}\langle \left(\CC(\la)+\D(\la)-\CC(0)-\D(0)\right)u,h\rangle=0 \mbox{ for all } h \in H.
\ee
Finally, suppose that there exists $d>0$ such that for all  $\la \in [-\eps_0,\eps_0]$
\beq
\label{1}
I-\CC(\la) \mbox{ is bijective from } U \mbox{ to } U 
\mbox{ and } \left\|\left(I-\CC(\la)\right)^{-1}\right\|_{\LL(U)} \le d,
\ee
\beq
\label{CDabsch}
\|\CC(\la)\|_{\LL(U)}+\|\D(\la)\|_{\LL(U)} \le d,
\ee
and 
\beq
\label{absch}
\left(\left(I-\CC(\la)\right)^{-1}\D(\la)\right)^2u \in V, \;\left\|\left(\left(I-\CC(\la)\right)^{-1}\D(\la)\right)^2u\right\|_V 
\le d\|u\|_U \mbox{ for all } u \in U.
\ee 
Then the following is true:

(i) $I-\CC(\la)-\D(\la)$ is Fredholm of index zero from $U$ to $U$ for all $ \la \in [-\eps_0,\eps_0]$.

(ii) If  $\left(I-\CC(\la)-\D(\la)\right)u=0$ for $u \in U$ and  $\la \in [-\eps_0,\eps_0]$, then $u \in V$.

(iii) If $I-\CC(0)-\D(0)$ is injective, then
there exist $\eps_1 \in (0,\eps_0]$ and $c>0$ such that $\|\left(I-\CC(\la)-\D(\la)\right)u\|_U \ge c\|u\|_U$  for all $ \la \in [-\eps_1,\eps_1]$ and $u \in U$.
\end{lem}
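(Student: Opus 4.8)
The plan is to treat the three assertions in order, using the hypotheses in an essential way. For (i), I would first observe that $I-\CC(\la)$ is invertible by \reff{1}, so $I-\CC(\la)-\D(\la) = (I-\CC(\la))\bigl(I - (I-\CC(\la))^{-1}\D(\la)\bigr)$, and it suffices to show that $I - M(\la)$ is Fredholm of index zero, where $M(\la) := (I-\CC(\la))^{-1}\D(\la)$. The trick is to pass to $M(\la)^2$: since $(I-M(\la))(I+M(\la)) = I - M(\la)^2$, and $M(\la)^2$ maps $U$ into $V$ boundedly by \reff{absch}, the compact embedding \reff{VU} makes $M(\la)^2 : U \to U$ a compact operator. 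Hence $I - M(\la)^2$ is Fredholm of index zero by the classical Riesz--Schauder theory, and therefore so is each of its factors $I \pm M(\la)$ (a product of two Fredholm operators is Fredholm, the index is additive, and the two factors have the same index by the symmetry $M \mapsto -M$, so each has index zero). Multiplying by the isomorphism $I-\CC(\la)$ on the left preserves the Fredholm property and the index, giving (i).

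For (ii), suppose $(I-\CC(\la)-\D(\la))u = 0$. Then $u = M(\la)u$, hence $u = M(\la)^2 u \in V$ directly by \reff{absch}. That is all that is needed.

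The real work is in (iii). The natural approach is a contradiction/compactness argument. If the claimed estimate fails, then there exist sequences $\la_k \to 0$ (after passing to a subsequence, using that $[-\eps_0,\eps_0]$ is compact one gets convergence of $\la_k$; one then has to check the limit is $0$ — this is where injectivity of $I-\CC(0)-\D(0)$ enters) and $u_k \in U$ with $\|u_k\|_U = 1$ and $\|(I-\CC(\la_k)-\D(\la_k))u_k\|_U \to 0$. Write $u_k = M(\la_k)u_k + (I-\CC(\la_k))^{-1}\bigl((I-\CC(\la_k)-\D(\la_k))u_k\bigr)$; the second term tends to $0$ in $U$ by \reff{1}, so $u_k - M(\la_k)u_k \to 0$, hence also $u_k - M(\la_k)^2 u_k \to 0$ in $U$. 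Now $M(\la_k)^2 u_k$ lies in the ball of radius $d$ in $V$ by \reff{absch}, so by \reff{VU} a subsequence converges in $U$ to some limit $u_*$; therefore $u_k \to u_*$ in $U$ with $\|u_*\|_U = 1$. It remains to identify $u_*$ as a nonzero element of $\ker(I-\CC(0)-\D(0))$, which contradicts injectivity. For this one must pass to the limit in $(I-\CC(\la_k)-\D(\la_k))u_k \to 0$. Here the operators do not converge in norm, so the limit has to be taken in the weak sense provided by \reff{Hlim}: using \reff{UH}, test against arbitrary $h \in H$, split $\langle(\CC(\la_k)+\D(\la_k))u_k, h\rangle$ as $\langle(\CC(\la_k)+\D(\la_k))(u_k - u_*), h\rangle + \langle(\CC(\la_k)+\D(\la_k))u_*, h\rangle$; the first piece is controlled by \reff{CDabsch} together with $u_k \to u_*$ in $U \hookrightarrow H$, and the second converges to $\langle(\CC(0)+\D(0))u_*, h\rangle$ by \reff{Hlim}. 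Thus $\langle(I-\CC(0)-\D(0))u_*, h\rangle = 0$ for all $h \in H$, so $(I-\CC(0)-\D(0))u_* = 0$ by density of $U$ in $H$, and $u_* \ne 0$ — the desired contradiction.

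The main obstacle is precisely this last limiting step in (iii): because $\la \mapsto \CC(\la), \D(\la)$ are only weakly continuous in the sense of \reff{Hlim} and not norm-continuous, one cannot simply pass to the limit in $U$; the argument must be arranged so that the $U$-strong convergence of $u_k$ (extracted via the compactness in \reff{absch}--\reff{VU}) feeds into the $H$-weak continuity \reff{Hlim} without circularity. Once the bookkeeping of which convergence is used where is set up correctly, the estimates themselves are routine. A minor subtlety worth flagging is ensuring $\la_k \to 0$ and not merely to some $\la_* \in [-\eps_0,\eps_0]$; if the hypothesis \reff{Hlim} is only posited at $\la = 0$, one should either restrict attention from the outset to a neighborhood of $0$ or note that the same contradiction argument applies at any limit point $\la_*$ provided the analogue of \reff{Hlim} holds there, but in the present formulation it is cleanest to first shrink to $[-\eps_1,\eps_1]$ and argue by contradiction within that interval with $\la_k \to 0$.
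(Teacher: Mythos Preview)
Your proposal is correct and follows essentially the same route as the paper: the factorization $I-\CC(\la)-\D(\la)=(I-\CC(\la))(I-M(\la))$ with $M(\la)^2$ compact for (i), the immediate bootstrap $u=M(\la)^2u\in V$ for (ii), and the compactness-plus-weak-limit contradiction for (iii). One small over-complication: the ``subtlety'' about $\la_k\to 0$ is not an issue, since negating the conclusion of (iii) (for every $\eps_1>0$ and $c>0$ the estimate fails) directly yields sequences with $|\la_k|\to 0$ by taking $\eps_1=c=1/k$; no compactness of $[-\eps_0,\eps_0]$ or separate identification of the limit point is needed, and the paper encodes this by writing $|\la_n|+\|(I-\CC(\la_n)-\D(\la_n))u_n\|_U\to 0$ from the outset.
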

\begin{proof} 
Because of assumptions \reff{1} and \reff{absch} the operator   $\left((I-\CC(\la))^{-1}\D(\la)\right)^2$ is bounded from $U$ to $V$. Hence, assumption  \reff{VU}
yields that it is compact from $U$ to $U$. Therefore  \cite[Theorem XIII.5.2]{KA} implies that the operator  $I-(I-\CC(\la))^{-1}\D(\la)$ is Fredholm of index zero from $U$ to $U$,
i.e. Claim (i) is therewith proved.

To prove Claim (ii), take $u \in U$ with $\left(I-\CC(\la)-\D(\la)\right)u=0$. Then  we have 
$u=\left((I-\CC(\la))^{-1}\D(\la)\right)^2u$,
and  \reff{absch} yields $u \in V$, as desired.

To prove Claim (iii), suppose the contrary. Then there exist sequences $\la_1,\la_2,\ldots \in  [-\eps_0,\eps_0]$ and $u_1,u_2,\ldots \in U$ with 
\beq
\label{un1}
\|u_n\|_U=1
\ee
and
\beq
\label{Aun1}
|\la_n|+\|\left(I-\CC(\la_n)-\D(\la_n)\right)u_n\|_U \to 0.
\ee
From \reff{1}, \reff{CDabsch} and \reff{Aun1} it follows
\begin{eqnarray}
\label{Aun2}
&&\left\|\left(I-\left((I-\CC(\la_n))^{-1}\D(\la_n)\right)^2\right)u_n\right\|_U\nonumber\\
&&=\left\|(I-\CC(\la_n))^{-1}(I-\CC(\la_n)+\D(\la_n))(I-\CC(\la_n))^{-1}\left(I-\CC(\la_n)-\D(\la_n)\right)u_n\right\|_U \nonumber\\
&&\le \mbox{ const } \|\left(I-\CC(\la_n)-\D(\la_n)\right)u_n\|_U \to 0.
\end{eqnarray}
Moreover, because of  \reff{absch} and\reff{un1}
the sequence
$
\left((I-\CC(\la_n))^{-1}\D(\la_n)\right)^2u_n
$
is bounded in $V$.
Hence, \reff{VU} yields that without loss of generality we can assume that it converges in $U$, 
i.e., that there exists $u_* \in U$ such that
$$
\left\|\left(I-\CC(\la_n))^{-1}\D(\la_n)\right)^2u_n-u_*\right\|_U \to 0.
$$
Therefore,
\reff{Aun2} implies
\beq
\label{Aun3}
\|u_n-u_*\|_U \to 0.
\ee
On the other hand, from  \reff{Hlim} and  \reff{un1} we have 
$$
\left(\CC(\la_n)+\D(\la_n)-\CC(0)-\D(0)\right)u_n \rightharpoonup 0 \mbox{ in } H.
$$
Using  \reff{UH} and  \reff{Aun1}, we get $(I-\CC(0)-\D(0))u_n \rightharpoonup 0$ in $H$. Hence,   \reff{UH} and  \reff{Aun3} yield $(I-\CC(0)-\D(0))u_*=0$. 
But the operator $I-\CC(0)-\D(0)$ is injective, hence $u_*=0$.
Therefore \reff{Aun3} contradicts to  \reff{un1}.
\end{proof}

\begin{rem}\rm
\label{B3}
If $V$ is dense in $U$, then for \reff{absch} it is sufficient that there exists $d>0$ such that for all $\la \in  [-\eps_0,\eps_0]$ 
\beq
\label{Vabsch}
\D(\la)^2v, \D(\la)\CC(\la)v \in V \mbox{ and } \|\D(\la)^2v\|_V + \|\D(\la)\CC(\la)v\|_V 
\le d\|v\|_U \mbox{ for all } v \in V.
\ee 
Indeed, take $u \in U$. Then there exists a sequence $v_1,v_2,\ldots \in V$ such that $\|v_n-u\|_U \to 0$. Because of \reff{1},  \reff{CDabsch} and \reff{Vabsch},
the sequence
\beq
\label{formel}
\left((I-\CC(\la))^{-1}\D(\la)\right)^2v_n=(I-\CC(\la))^{-1}\left(\D(\la)^2+\D(\la)\CC(\la)(I-\CC(\la))^{-1}\D(\la)\right)v_n
\ee
is fundamental in $V$. Hence, there exists $v_* \in V$ such that 
$$
\left\|\left((I-\CC(\la))^{-1}\D(\la)\right)^2v_n-v_*\right\|_V \to 0.
$$ 
On the other hand, the sequence 
$\left((I-\CC(\la))^{-1}\D(\la)\right)^2v_n$ tends to $\left((I-\CC(\la))^{-1}\D(\la)\right)^2u$ in $U$, i.e. $\left((I-\CC(\la))^{-1}\D(\la)\right)^2u=v_*$.
Using    \reff{1}, \reff{CDabsch}, \reff{Vabsch} and \reff{formel} again we get
\begin{eqnarray*}
\|\left((I-\CC(\la))^{-1}\D(\la)\right)^2u\|_V&=&\lim_{n \to \infty}\|\left((I-\CC(\la))^{-1}\D(\la)\right)^2v_n\|_V\\
&\le & \mbox{\rm const }\lim_{n \to \infty}\|v_n\|_U
=  \mbox{\rm const }\|u\|_U.
\end{eqnarray*}
\end{rem}

\end{appendix}

\end{document}